\newtheorem{thm}{Theorem}
\newtheorem{cor}[thm]{Corollary}
\newtheorem{lem}[thm]{Lemma}
\newtheorem{prop}[thm]{Proposition}
\newtheorem{defn}[thm]{Definition}
\newtheorem{claim}{Claim}
\newtheorem{fact}{Fact}
\theoremstyle{definition}
\newtheorem{rem}{Remark}
\newtheorem{examp}{Example}
\newcommand{\SB}{\mathbf{\Sigma}}
\newcommand{\PB}{\mathbf{\Pi}}
\newcommand{\nn}{\mathbb{N}}
\newcommand{\rr}{\mathbb{R}}
\newcommand{\aaa}{\mathcal{A}}
\newcommand{\bbb}{\mathcal{B}}
\newcommand{\ccc}{\mathcal{C}}
\newcommand{\eee}{\mathcal{E}}
\newcommand{\iii}{\mathcal{I}}
\newcommand{\jjj}{\mathcal{J}}
\newcommand{\fff}{\mathcal{F}}
\newcommand{\kkk}{\mathcal{K}}
\newcommand{\hhh}{\mathcal{H}}
\newcommand{\wf}{\mathrm{WF}}
\newcommand{\sssb}{\Sigma_{\mathbf{b}}}
\newcommand{\pap}{\mathcal{P}_{\infty}}
\newcommand{\bt}{\nn^{<\nn}}
\newcommand{\ct}{2^{<\nn}}
\newcommand{\sg}{\sigma}
\newcommand{\con}{\smallfrown}
\newcommand{\llf}{\mathcal{L}_{f}}
\begin{document}


\title[On pairs of definable orthogonal families]{On pairs of definable orthogonal families}
\author{Pandelis Dodos and Vassilis Kanellopoulos}
\address{National Technical University of Athens, Faculty of Applied Sciences,
Department of Mathematics, Zografou Campus, 157 80, Athens, Greece}
\email{pdodos@math.ntua.gr, bkanel@math.ntua.gr}

\footnotetext[1]{2000 \textit{Mathematics Subject Classification}:
03E15, 05D10, 28A05, 54H05.}

\maketitle


\begin{abstract}
We introduce the notion of an M-family of infinite subsets of $\nn$
which is implicitly contained in the work of A. R. D. Mathias. We
study the structure of a pair of orthogonal hereditary families
$\aaa$ and $\bbb$, where $\aaa$ is analytic and $\bbb$ is
$C$-measurable and an M-family.
\end{abstract}


\section{Introduction}

Two families $\aaa$ and $\bbb$ of infinite subsets of $\nn$
are said to be \textit{orthogonal} if $A\cap B$ is finite
for every $A\in\aaa$ and every $B\in\bbb$. The study of the
structure of a pair $(\aaa, \bbb)$ of orthogonal families is
a classical topic (\cite{Hau}) which has found numerous
applications (see, for  instance, \cite{DW} and \cite{To4}).
Among all pairs $(\aaa, \bbb)$ of orthogonal families of particular
importance is the study of the \textit{definable} ones. Here
the word definable refers to the descriptive set theoretic
complexity of $\aaa$ and $\bbb$ as subsets of $\mathcal{P}(\nn)$.
A fundamental result in this direction is the ``perfect Lusin gap"
theorem of Stevo Todor\v{c}evi\'{c} \cite{To2} which deals with
a pair of analytic and orthogonal families.

In this paper we study the structure of a pair $(\aaa, \bbb)$ of
hereditary and orthogonal families where $\aaa$ is analytic and
$\bbb$ is $C$-measurable\footnote[2]{We recall that a subset of
a Polish space is $C$-measurable if it belongs to the
smallest $\sigma$-algebra that contains the open sets and is
closed under the Souslin operation.} and ``large". Our notion
of largeness is the following which is implicitly contained
in the work of A. R. D. Mathias \cite{Ma}.
\begin{defn}
\label{d1} We say that a hereditary family $\aaa$ of infinite
subsets of $\nn$ is an M-family if for every sequence $(A_n)_n$
in $\aaa$ there exists $A\in\aaa$ whose all but finitely many
elements are in $\bigcup_{i\geq n} A_i$ for every $n\in\nn$.
\end{defn}
We should point out that there are several other notions
appearing in the literature, such as P-ideals
(see \cite{So}, \cite{To2}) or semi-selective co-ideals
(see \cite{Fa}), involving the existence of diagonal sequences.
We should also point out that the notion of an M-family
is closely related to the weak diagonal sequence property
of topological spaces and, in fact, it can be considered as
its combinatorial analogue.

Using Ellentuck's theorem \cite{El} we show that the class
of $C$-measurable M-families possesses strong stability
properties. It is closed, for instance, under intersection
and ``diagonal" products. As a consequence we prove that if
$(X,\tau_1)$ and $(Y,\tau_2)$ are two countable analytic
spaces with the weak diagonal sequence property, then the
product $(X\times Y, \tau_1\times \tau_2)$ has the weak
diagonal sequence property. This answers Question 5.4 from
\cite{TU}.

Our first result, concerning the structure of a pair $(\aaa,\bbb)$
as described above, is the following (see \S 2 for the relevant
definitions).
\medskip

\noindent \textbf{Theorem I.} \textit{Let $\aaa$ and $\bbb$ be two
hereditary, orthogonal families of infinite subsets of $\nn$.
Assume that $\aaa$ is analytic and that $\bbb$ is an M-family
and $C$-measurable. Then, either
\begin{enumerate}
\item[(i)] $\aaa$ is countably generated in $\bbb^\perp$, or
\item[(ii)] there exists a perfect Lusin gap inside $(\aaa,\bbb)$.
\end{enumerate} }
\medskip

\noindent Theorem I shows that the assumption of being an
M-family can successfully replace analyticity in the perfect
Lusin gap theorem of \cite{To2}. We should point out that the
phenomenon of replacing analyticity by a structural property
and still getting the same conclusion as in Theorem I has
already appeared in the literature (see \cite{To4} and
\cite{TU}). As a matter of fact Theorem I was motivated
by these applications.

Our second result, concerning the structure of a pair
$(\aaa,\bbb)$ as in Theorem I, extends a result
of A. Krawczyk from \cite{Kra}. To state it,
it is useful to look at the second orthogonal
$\bbb^{\perp\perp}$ of $\bbb$. In a sense the family
$\bbb^{\perp\perp}$ is the ``completion" of $\bbb$, as an
infinite subset $L$ of $\nn$ belongs to $\bbb^{\perp\perp}$
if (and only if) every infinite subset of $L$ contains an
element of $\bbb$. To proceed with our discussion, let $\ccc$
be the family of all infinite chains of $\bt$ (we recall
that a subset of $\bt$ is called a chain if it is linearly
ordered under the order of end-extension). Let also
$\iii_{\mathrm{wf}}$ be the ideal on $\bt$ generated
by the set $\mathrm{WF}$ of all downwards closed, well-founded,
infinite subtrees of $\bt$. The following theorem shows that
if $\aaa,\bbb$ are as above and $\aaa$ is not countably
generated in $\bbb^{\perp}$, then the pair
$(\ccc,\iii_{\mathrm{wf}})$ ``embeds" into the pair
$(\aaa,\bbb^{\perp\perp})$ in a very canonical way.
\medskip

\noindent \textbf{Theorem II.} \textit{Let $\aaa$ and $\bbb$ be
two hereditary, orthogonal families of infinite subsets of $\nn$.
Assume that $\aaa$ is analytic and that $\bbb$ is an M-family and
$C$-measurable. Then, either
\begin{enumerate}
\item[(i)] $\aaa$ is countably generated in $\bbb^\perp$, or
\item[(ii)] there exists a one-to-one map $\psi:\bt\to\nn$ such
that
\[ \ccc\subseteq \{ \psi^{-1}(A):A\in\aaa\} \ \text{ and } \
\iii_{\mathrm{wf}}\subseteq \{\psi^{-1}(B): B\in \bbb^{\perp\perp}\}. \]
\end{enumerate} }
\medskip

One of the main ingredients of the proofs of Theorem I
and of Theorem II is the infinite dimensional extension
of Hindman's theorem \cite{Hi}, due to K. Milliken \cite{Mil}.
It is used in a spirit similar as in \cite{ADK}.

The paper is organized as follows. In \S 2 we gather some
preliminaries needed in the rest of the paper. In \S 3 we
study the connection of M-families with other related notions
and we give some examples. In \S 4 we present some of their
structural properties. The proof of Theorem I is given in
\S 5 while the proof of Theorem II is given in \S 6.
Our general notation and terminology is standard,
as can be found in \cite{Kechris} and \cite{To3}.
\bigskip

\noindent \textbf{Acknowledgments.} We would like to
thank the anonymous referee for his thorough report
which improved the presentation of the paper.


\section{Preliminaries}

It is a common fact that once one is willing to present some results
about trees, ideals and related combinatorics, then one has to set up
a, rather large, notational system. Below we gather all the conventions
that we need and which are, more or less, standard. In what follows
$X$ will be a countable (infinite) set.

\subsection{Ideals} By $\pap(X)$ we denote the set of all infinite
subsets of $X$ (which is clearly a Polish subspace of $2^X$).
A family $\aaa\subseteq \pap(X)$ is \textit{hereditary} if for
every $A\in\aaa$ and every $A'\in \pap(A)$ we have $A'\in \aaa$.
A subfamily $\bbb$ of a family $\aaa$ is \textit{cofinal} in $\aaa$
if for every $A\in\aaa$ there exists $B\in \pap(A)$ with $B\in\bbb$.

Given $A, B\in \pap(X)$ we write $A\subseteq^* B$ if the set
$A\setminus B$ is finite, while we write $A\perp B$ if the set
$A\cap B$ is finite. Two families $\aaa, \bbb\subseteq \pap(X)$
are said to be \textit{orthogonal}, in symbols $\aaa\perp\bbb$,
if $A\perp B$ for every $A\in\aaa$ and every $B\in\bbb$.
For every $\aaa\subseteq \pap(X)$ we set $\aaa^\perp=\{
B\in\pap(X): B\perp A \text{ for all } A\in\aaa\}$ and
$\aaa^*=\{X\setminus A: A\in\aaa\}$. The family $\aaa^\perp$
is called the \textit{orthogonal} of $\aaa$. Notice that
$\aaa^\perp$ is an ideal.

Two families $\aaa$ and $\bbb$ are \textit{countably separated} if
there exists a sequence $(C_n)_n$ in $\pap(X)$ such that
for every $A\in\aaa$ and every $B\in\bbb$ there exists $n\in\nn$
with $A\subseteq C_n$ and $C_n \perp B$. A family $\aaa$ is
\textit{countably generated} in a family $\bbb$, if there
exists a sequence $(B_n)_n$ in $\bbb$ such that for every $A\in\aaa$
there exists $n\in\nn$ with $A\subseteq^* B_n$. An ideal $\iii$
on $X$ is said to be \textit{bisequential} if for every ultrafilter
$p$ on $X$ with $\iii\subseteq p^*$ the family $\iii$ is countably
generated in $p^*$.

Given $\aaa\subseteq \pap(X)$ we let
\begin{equation}
\label{e1} \mathrm{co}(\aaa)=\{ B\in\pap(X): \exists A\in\aaa \text{ with }
B\cap A \text{ infinite}\}= \pap(X)\setminus \aaa^\perp.
\end{equation}
Notice that $\mathrm{co}(\aaa)$ is a co-ideal. We call
$\mathrm{co}(\aaa)$ as the \textit{co-ideal generated by}
$\aaa$. Observe that if $\aaa$ is hereditary, then
$\mathrm{co}(\aaa)=\{B\in\pap(X): \exists A\in\aaa \text{ with }
A\subseteq B\}$.

The following elementary, well-known, fact provides the description
of the second orthogonal $\aaa^{\perp\perp}$ of a hereditary family $\aaa$.
\begin{fact}
\label{f1} Let $\aaa\subseteq \pap(X)$ hereditary. Let also $B\in\pap(X)$.
Then $B\in\aaa^{\perp\perp}$ if and only if for every $C\in\pap(B)$
there exists $A\in\pap(C)$ with $A\in\aaa$.
\end{fact}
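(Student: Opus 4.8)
The plan is simply to unwind the definition of the second orthogonal and argue each implication by contradiction. Recall that $\aaa^{\perp\perp}=(\aaa^\perp)^\perp$, so $B\in\aaa^{\perp\perp}$ exactly when $B\cap D$ is finite for every $D\in\aaa^\perp$, and that $D\in\aaa^\perp$ exactly when $D\cap A$ is finite for every $A\in\aaa$. Everything will follow by threading these two ``for all'' quantifiers against the ``there exists'' in the statement.

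For the backward direction I would assume that every $C\in\pap(B)$ has some $A\in\pap(C)$ with $A\in\aaa$, and suppose towards a contradiction that $B\notin\aaa^{\perp\perp}$, i.e.\ that there is $D\in\aaa^\perp$ with $B\cap D$ infinite. Applying the hypothesis to $C=B\cap D\in\pap(B)$ produces $A\in\pap(C)$ with $A\in\aaa$; since $A\subseteq C\subseteq D$, the set $A\cap D=A$ is infinite, contradicting $D\in\aaa^\perp$. For the forward direction I would assume $B\in\aaa^{\perp\perp}$ and suppose towards a contradiction that there is some $C\in\pap(B)$ with no infinite subset in $\aaa$. I claim $C\in\aaa^\perp$: given any $A\in\aaa$, if $A\cap C$ were infinite it would be an infinite subset of $A$, hence a member of $\aaa$ by heredity, and also an infinite subset of $C$ — contradicting the choice of $C$; thus $A\cap C$ is finite for all $A\in\aaa$. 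But then, since $B\in\aaa^{\perp\perp}$ and $C\in\aaa^\perp$, the set $B\cap C=C$ must be finite, a contradiction.

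There is no real obstacle here; the only thing worth flagging is that heredity is used exactly once, namely to pass from ``$A\cap C$ infinite'' to ``$A\cap C\in\aaa$'' in the forward direction, and that nothing about $\aaa$ beyond heredity is needed — the remaining steps are pure bookkeeping with the definitions of $\perp$ and of $\pap(X)$.
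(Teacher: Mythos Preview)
Your proof is correct and is exactly the straightforward unwinding of definitions one would expect; the paper itself does not give a proof at all, simply labelling the fact as elementary and well-known. Your observation that heredity is used precisely once, in the forward direction, is accurate.
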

An ideal $\iii$ is said to have the \textit{Fr\'{e}chet property}
if $\iii=\iii^{\perp\perp}$. We notice that if $\aaa$ is a
hereditary family, then both $\aaa^\perp$ and $\aaa^{\perp\perp}$
have the Fr\'{e}chet property. The following fact is also well-known.
We sketch its proof for completeness.
\begin{fact}
\label{f2} A bisequential ideal $\iii$ on $X$ has the Fr\'{e}chet property.
\end{fact}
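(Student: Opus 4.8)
The plan is to argue by contraposition: assuming $\iii\neq\iii^{\perp\perp}$, I will produce an ultrafilter $p$ on $X$ with $\iii\subseteq p^*$ witnessing the failure of bisequentiality. Since $\iii$ is a hereditary family, we always have $\iii\subseteq\iii^{\perp\perp}$, so the hypothesis $\iii\neq\iii^{\perp\perp}$ gives a set $L\in\iii^{\perp\perp}\setminus\iii$. By Fact \ref{f1} applied to $\iii$, every infinite subset of $L$ contains an element of $\iii$; since $\iii$ is hereditary this means every infinite subset of $L$ \emph{is covered by}, hence (taking an infinite subset) \emph{belongs to}, $\iii$ in the sense that $\pap(L)\subseteq\mathrm{co}(\iii)$, i.e. no infinite subset of $L$ is orthogonal to $\iii$. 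In particular $L\notin\iii^\perp$ and, crucially, the trace filter $\{Y\subseteq X: L\setminus Y\in\iii\}$ is a proper filter: it is closed under finite intersections, and it does not contain $\emptyset$ precisely because $L\notin\iii$.

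Next I extend this trace filter to an ultrafilter $p$ on $X$. By construction $\iii\subseteq p^*$: if $I\in\iii$ then $L\setminus(X\setminus I)=L\cap I\subseteq I\in\iii$, so $X\setminus I\in p$, i.e. $I\in p^*$. Also $L\in p$ since $L\setminus L=\emptyset\in\iii$. Now suppose, toward a contradiction, that $\iii$ is countably generated in $p^*$; fix a sequence $(B_n)_n$ in $p^*$ such that every $I\in\iii$ satisfies $I\subseteq^* B_n$ for some $n$. Since each $B_n\in p^*$ we have $X\setminus B_n\in p$, hence $L\cap(X\setminus B_n)\notin\iii$, and in particular $L\setminus B_n$ is infinite for every $n$. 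The goal is to diagonalize against the $B_n$ to produce an infinite subset $M$ of $L$ that is almost disjoint from every $B_n$; then $M\in\pap(L)$, so by the previous paragraph $M$ contains an element $I\in\iii$, but then $I\subseteq^* B_n$ for some $n$ forces $I\cap B_n$ to be cofinite in $I$, contradicting $I\subseteq M\subseteq^*(X\setminus B_n)$ once $I$ is infinite. Building such an $M$ is the routine part: enumerate $L=\{l_0<l_1<\dots\}$ and pick $m_k\in L\setminus(B_0\cup\cdots\cup B_k)$ with $m_k>m_{k-1}$, which is possible because each $L\setminus B_n$ is infinite and a finite union of them still misses infinitely much of $L$ — wait, this last point needs a touch of care, so let me phrase it as: pick $m_k$ to be an element of $L$ above $m_{k-1}$ and outside $B_0\cup\cdots\cup B_k$, using that $L\cap(X\setminus(B_0\cup\cdots\cup B_k))=\bigcap_{j\le k}(L\setminus B_j)$ — no. The correct observation is simpler: it suffices that $M\subseteq^* B_n$ fails for all $n$, and for that we only need $M\setminus B_n$ infinite for each fixed $n$; so build $M$ so that for each $n$ it contains infinitely many elements of $L\setminus B_n$, by a standard back-and-forth enumeration of pairs.

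The main obstacle, and the only place the argument has real content, is getting a single infinite $M\subseteq L$ that is simultaneously almost disjoint from all countably many $B_n$: a naive diagonalization gives $M$ almost disjoint from each $B_n$ only if the sets $L\setminus B_n$ interact well, which they do here precisely because each is infinite, but one must be careful not to exhaust $L$. I expect the clean way is: since $L\notin\iii$, the trace ideal $\iii\restriction L=\{Y\subseteq L:Y\in\iii\}$ is a proper ideal on the infinite set $L$, and each $L\cap B_n$ lies in $p^*\restriction L$... actually the cleanest route, matching the phrasing "we sketch its proof," is simply to invoke that an ultrafilter $p$ with $\iii\subseteq p^*$ that is \emph{not} countably generated in $p^*$ exists whenever $\iii$ lacks the Fréchet property, via exactly the trace-filter construction above together with the standard fact that a countably generated filter on an infinite set is never an ultrafilter together with a diagonalization — and then the contradiction is immediate. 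I will present the trace-filter construction and the diagonalization, flag the diagonalization as standard, and conclude.
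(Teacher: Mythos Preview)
Your overall plan---build an ultrafilter $p$ containing the bad set with $\iii\subseteq p^*$, invoke bisequentiality to get a generating sequence $(B_n)_n$ in $p^*$, then diagonalize---is exactly the paper's approach. The paper phrases it directly rather than by contraposition: starting from an arbitrary $A\notin\iii$, it produces $C\in\pap(A)\cap\iii^\perp$, which immediately shows $A\notin\iii^{\perp\perp}$. Your route through a set $L\in\iii^{\perp\perp}\setminus\iii$ and a contradiction is equivalent but slightly more roundabout, since the extra hypothesis $L\in\iii^{\perp\perp}$ is never really used beyond what $L\notin\iii$ already gives.

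There is, however, a genuine gap in your diagonalization. You correctly set out to build $M\subseteq L$ with $M\perp B_n$ for every $n$, then worry whether $L\setminus(B_0\cup\cdots\cup B_k)$ is infinite, and retreat to the weaker requirement that $M\setminus B_n$ be infinite for each $n$. That weaker property does \emph{not} suffice: if you only know $M\setminus B_n$ is infinite, the element $I\in\iii$ sitting inside $M$ could perfectly well live in $M\cap B_n$, and there is no contradiction with $I\subseteq^* B_n$. You need $M\cap B_n$ finite, i.e.\ the strong version you abandoned.

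The worry was unfounded. Since $L\in p$ and each $X\setminus B_j\in p$, the finite intersection $L\setminus(B_0\cup\cdots\cup B_k)$ lies in $p$ and is therefore infinite (the ultrafilter is non-principal because $\iii$ contains all finite sets). So your first diagonalization works as written. The paper streamlines this by first replacing $(B_n)_n$ with an increasing sequence, so that $(A\setminus B_n)_n$ is decreasing; then an ordinary diagonal choice $c_n\in A\setminus B_n$ with $c_n$ increasing gives $C=\{c_n:n\in\nn\}$ with $C\cap B_n\subseteq\{c_0,\ldots,c_{n-1}\}$, hence $C\in\iii^\perp$.
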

\begin{proof}
In light of Fact \ref{f1}, it is enough to show that for every
$A\notin \iii$ there exists $C\in \pap(A)$ with $C\in \iii^\perp$.
So, let $A\notin \iii$. The family $\{A\setminus L:L\in\iii\}$ has the
finite intersection property. Hence, we may find $p\in\beta X$,
non-principal, with $\iii\subseteq p^*$ and $A\in p$.
By the bisequentiality of $\iii$, there exists a sequence $(B_n)_n$
in $p^*$ such that for every $L\in\iii$ there exists $n\in\nn$ with
$L\subseteq^* B_n$. Clearly, we may assume that the sequence
$(B_n)_n$ is increasing. Let $C$ be an infinite diagonalization of
the decreasing sequence $(A\setminus B_n)_n$. Then $C\in\pap(A)$
and $C\in \iii^\perp$. The proof is completed.
\end{proof}

\subsection{Trees and block sequences} By $X^{<\nn}$ we shall
denote the set of all finite sequences in $X$. We view $X^{<\nn}$
as a tree under the (strict) partial order $\sqsubset$ of
end-extension. For every $s,t\in X^{<\nn}$
by $s^{\con}t$ we denote their concatenation. If $T$ is a
downwards closed subtree of $X^{<\nn}$, then by $[T]$ we shall
denote its body, i.e. the set $\{\sg\in X^\nn:\sg|n\in T \
\forall n\in\nn\}$. Two nodes $s,t\in T$ are said to be
\textit{comparable} if either $t\sqsubseteq s$ or
$s\sqsubseteq t$; otherwise they are said to be \textit{incomparable}.
A subset of $T$ consisting of pairwise comparable nodes
is said to be a \textit{chain}, while a subset of $T$ consisting
of pairwise incomparable nodes is said to be an
\textit{antichain}.

By $\Sigma$ we shall denote the downwards closed subtree of
$\bt$ consisting of all strictly increasing finite sequences.
We view, however, every $t\in\Sigma$ not only as a finite
increasing sequence but also as finite subset
of $\nn$. Given $s,t\in\Sigma\setminus\{\varnothing\}$ we write
$s<t$ if $\max s<\min t$. By convention $\varnothing<t$ for every
$t\in\Sigma$ with $t\neq \varnothing$. If $s,t\in\Sigma$ with $s<t$,
then we will frequently denote by $s\cup t$ the concatenation
of $s$ and $t$.

By $\mathbf{B}$ we shall denote the closed subset of $\Sigma^\nn$
($\Sigma$ equipped with the discrete topology) consisting
of all sequences $(b_n)_n$ with $b_n\neq\varnothing$ and
$b_n<b_{n+1}$ for every $n\in\nn$. We call a sequence
$\mathbf{b}=(b_n)_n\in\mathbf{B}$ a \textit{block} sequence.
For every block sequence $\mathbf{b}=(b_n)_n$ we set
\begin{equation}
\label{e2} \langle \mathbf{b}\rangle=\big\{ \bigcup_{n\in F} b_n: F\subseteq\nn
\text{ finite}\big\}\subseteq\Sigma \ \text{ and } \ [\mathbf{b}]=\big\{
(c_n)_n\in\mathbf{B}: c_n\in \langle\mathbf{b}\rangle \ \forall n\big\}.
\end{equation}
We will need the following consequence of K. Milliken's theorem \cite{Mil}.
\begin{thm}
\label{t2} Let $\mathcal{X}$ be a $C$-measurable subset of
$\mathbf{B}$. Then there exists $\mathbf{b}\in \mathbf{B}$ such that
either $[\mathbf{b}]\subseteq \mathcal{X}$ or $\mathcal{X}\cap [\mathbf{b}]=
\varnothing$.
\end{thm}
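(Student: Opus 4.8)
The statement is a Ramsey-type dichotomy for $C$-measurable subsets of the space $\mathbf{B}$ of block sequences, and the natural route is to reduce it to Milliken's theorem (equivalently, to the Ellentuck-type Ramsey property on $\mathbf{B}$) via the topological Ramsey space machinery. The plan is therefore to show that $\mathbf{B}$, with the collection of sets $[\mathbf{b}]$ for $\mathbf{b}\in\mathbf{B}$ serving as the basic ``Ellentuck-open'' neighborhoods, is a topological Ramsey space, and that $C$-measurable sets are completely Ramsey in the corresponding sense. Milliken's theorem itself gives exactly this for sets that are open (or Borel, or analytic) in the Ellentuck topology on $\mathbf{B}$; the only additional work is to climb from the analytic level to the $C$-measurable level.

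First I would recall the precise content of Milliken's theorem in the form we need: for every finite Souslin-measurable (in particular, for every metrically Borel, hence every open) subset $\mathcal{X}$ of $\mathbf{B}$ and every $\mathbf{b}\in\mathbf{B}$ there is $\mathbf{c}\in[\mathbf{b}]$ with $[\mathbf{c}]\subseteq\mathcal{X}$ or $[\mathbf{c}]\cap\mathcal{X}=\varnothing$. Then I would invoke the general principle, due to Ellentuck in the case of $[\nn]^\nn$ and valid verbatim in any topological Ramsey space (see Todor\v{c}evi\'c's monograph \cite{To3}), that the family of completely Ramsey sets forms a $\sigma$-algebra which is moreover closed under the Souslin operation. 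Concretely: the completely Ramsey subsets of $\mathbf{B}$ contain the metrically open sets (this is Milliken's theorem applied locally inside each $[\mathbf{b}]$), they are closed under complementation (immediate from the definition, since the dichotomy is symmetric in $\mathcal{X}$ and its complement), under countable unions (the standard fusion/diagonalization argument along a $\leq_{\mathrm{fin}}$-decreasing sequence of block sequences, using that $\mathbf{B}$ is closed under such fusions), and under the Souslin operation (the Nash-Williams/Galvin–Prikry-style argument adapted to this setting). Since the $C$-measurable sets are by definition the smallest $\sigma$-algebra containing the open sets and closed under the Souslin operation, every $C$-measurable $\mathcal{X}\subseteq\mathbf{B}$ is completely Ramsey; applying this with $\mathbf{b}$ the whole sequence $(\{0\},\{1\},\{2\},\dots)$ (or any fixed element of $\mathbf{B}$) yields the desired $\mathbf{b}$.

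The main obstacle — and the only point requiring genuine care rather than citation — is verifying that the axioms of a topological Ramsey space do hold for $\mathbf{B}$ with this notion of approximation, i.e. that the passage from Milliken's ``open'' Ramsey theorem to the ``completely Ramsey'' statement is legitimate. The delicate axiom is the amalgamation/pigeonhole axiom (Todor\v{c}evi\'c's axiom \textbf{A.4}), which for $\mathbf{B}$ is precisely Hindman's theorem applied to the finite unions $\langle\mathbf{b}\rangle$: given a finite colouring of the ``one-step extensions'' available from $\mathbf{b}$, one can pass to $\mathbf{c}\in[\mathbf{b}]$ making the colour depend only on the approximation. Granting \textbf{A.1}–\textbf{A.4} (finitization, which is routine here since each $[\mathbf{b}]$ is metrically closed; monotonicity; and the pigeonhole principle just mentioned), the abstract Ellentuck theorem applies and the $\sigma$-algebra-plus-Souslin closure of the completely Ramsey sets is automatic. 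One could alternatively bypass the abstract framework and run the fusion argument by hand: to handle a Souslin scheme $(\mathcal{X}_s)_{s\in\nn^{<\nn}}$ of open sets, build a fusion sequence deciding, for each finite $s$, whether $[\mathbf{b}_s]$ lies inside or outside $\mathcal{X}_s$, take the diagonal block sequence $\mathbf{b}_\infty$, and check that $[\mathbf{b}_\infty]$ then decides $\bigcup_\sigma\bigcap_n\mathcal{X}_{\sigma|n}$ — the verification that no branch can ``escape'' uses exactly the well-foundedness bookkeeping of the Nash-Williams argument. Either way, the substance of Theorem~\ref{t2} is entirely contained in Milliken's theorem together with the now-standard Ramsey-space calculus, and I would present it as such, citing \cite{Mil} and \cite{To3} for the two ingredients.
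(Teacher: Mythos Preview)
Your proposal is correct, but note that the paper does not actually prove Theorem~\ref{t2}: it is stated in \S2.2 as a known ``consequence of K.~Milliken's theorem~\cite{Mil}'' and is used as a black box throughout. So there is no proof in the paper to compare against; you are supplying the argument that the authors chose to omit.

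That said, your outline is exactly the standard derivation and matches what the authors evidently have in mind. The space $\mathbf{B}$ of block sequences, equipped with the finite-approximation structure coming from $\langle\mathbf{b}\rangle$, is one of the prototypical topological Ramsey spaces; the pigeonhole axiom \textbf{A.4} is Hindman's theorem, as you say, and Milliken's theorem is precisely the statement that Ellentuck-open sets are Ramsey in this space. The abstract Ellentuck theorem (see Todor\v{c}evi\'c's notes~\cite{To3}, or his later monograph on Ramsey spaces) then identifies the completely Ramsey sets with those having the Baire property in the Ellentuck topology; since the latter always form a $\sigma$-algebra closed under the Souslin operation and contain the metrically open sets, every $C$-measurable set is completely Ramsey. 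Your alternative of running the fusion argument by hand is also fine and is closer in spirit to Ellentuck's original proof for $[\nn]^\nn$. Either route is adequate here, and the paper's single-line citation is justified by this being well-established machinery.
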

We recall that the class of $C$-measurable sets is strictly bigger
than the $\sg$-algebra generated by the analytic sets (see, for
instance, \cite{Kechris}).

\subsection{Lusin gaps and related results} Let $\aaa, \bbb\subseteq
\pap(X)$. A \textit{perfect Lusin gap} inside $(\aaa,\bbb)$ is a
continuous, one-to-one map $2^\nn\ni x \mapsto (A_x, B_x)\in \aaa\times \bbb$
such that the following are satisfied.
\begin{enumerate}
\item[(a)] For every $x\in 2^\nn$, $A_x\cap B_x=\varnothing$.
\item[(b)] For every $x, y\in 2^\nn$ with $x\neq y$,
$(A_x\cap B_y)\cup (A_y\cap B_x)\neq \varnothing$.
\end{enumerate}
The notion of a perfect Lusin gap was introduced by S.
Todor\v{c}evi\'{c}. We notice that if there exists a perfect Lusin
gap inside $(\aaa, \bbb)$, then $\aaa$ and $\bbb$ are not
countably separated. The following result of Todor\v{c}evi\'{c}
\cite{To2} shows that this the only case for a pair of analytic
and orthogonal families.
\begin{thm}
\label{t3} Let $\aaa$ and $\bbb$ be two analytic, hereditary and
orthogonal families of infinite subsets of $\nn$. Then, either
\begin{enumerate}
\item[(i)] $\aaa$ and $\bbb$ are countably separated, or
\item[(ii)] there exists a perfect Lusin gap inside
$(\aaa, \bbb)$.
\end{enumerate}
\end{thm}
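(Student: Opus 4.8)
We outline a proof by a fusion on a tree of finite approximations. Assume $\aaa,\bbb\neq\varnothing$ and that clause (i) fails; we build a perfect Lusin gap. First note that if $A\cap B=\varnothing$ for all $A\in\aaa$ and $B\in\bbb$, then $C:=\bigcup\aaa\in\pap(\nn)$ witnesses that $\aaa$ and $\bbb$ are countably separated; so, (i) failing, there exist $A^\star\in\aaa$, $B^\star\in\bbb$ with $A^\star\cap B^\star\neq\varnothing$. Since $\aaa$ and $\bbb$ are analytic, fix pruned trees $S$, $R$ on $2\times\nn$ with $\aaa=\{A:\exists\sg\ (A,\sg)\in[S]\}$ and $\bbb=\{B:\exists\tau\ (B,\tau)\in[R]\}$; write a node of $S$ as $(s,u)$ with $s\in 2^m$, $u\in\nn^m$, and put $\mathrm{supp}(s)=\{i<m:s(i)=1\}$. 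A \emph{state} is a pair $\xi=((s,u),(r,v))$ with $(s,u)\in S$, $(r,v)\in R$ of a common length $n_\xi$ and $\mathrm{supp}(s)\cap\mathrm{supp}(r)=\varnothing$; one state \emph{extends} another if it does so coordinatewise. Call $\xi$ \emph{live} if there are $(A,\sg)\in[S]$ and $(B,\tau)\in[R]$ extending the two coordinates of $\xi$ with $A\cap B\neq\varnothing$.

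The fusion attaches to each $t\in\ct$ a state $\xi_t$ of length $n_t$ so that: (1) $\xi_t$ extends $\xi_{t'}$ for $t'\sqsubseteq t$, and $n_t$ and the sizes of the two supports of $\xi_t$ tend to $\infty$ along every branch; (2) for each $t$ there is $k_t\geq n_t$ lying in the $\aaa$-support of $\xi_{t\con 0}$ but not its $\bbb$-support, and in the $\bbb$-support of $\xi_{t\con 1}$ but not its $\aaa$-support. Given such a scheme, for $x\in 2^\nn$ let $A_x$ and $B_x$ be the unions along $x$ of the $\aaa$- and $\bbb$-supports; these lie in $\pap(\nn)$, and since $[S]$ and $[R]$ are closed, the $u$- and $v$-parts assemble into witnesses for $A_x\in\aaa$ and $B_x\in\bbb$. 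The disjointness built into (1)--(2) gives $A_x\cap B_x=\varnothing$, i.e.\ clause (a); if $x\neq y$ and $t$ is their longest common node with, say, $x\sqsupseteq t\con 0$ and $y\sqsupseteq t\con 1$, then $k_t\in A_x\cap B_y$, which is clause (b) and, with (a), forces $x\mapsto(A_x,B_x)$ to be one-to-one; continuity follows from $n_t\to\infty$. So it suffices to run the recursion through live states.

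The heart of the matter is the splitting step: from a live $\xi$, produce successors realizing (2) which are themselves live. Given a live $\xi$ with witnesses $(A,\sg),(B,\tau)$ and some $k\in A\cap B$ (necessarily $k\geq n_\xi$, since $\mathrm{supp}(s)\cap\mathrm{supp}(r)=\varnothing$), for the successor $t\con 0$ one wants a point of $[S]$ over $(s_\xi,u_\xi)$ whose set contains $k$, together with a point of $[R]$ over $(r_\xi,v_\xi)$ whose set avoids $k$ and meets the former, below a common length $>k$, only within $[0,n_\xi)$; truncating gives the successor, and symmetrically for $t\con 1$. Manufacturing the $\bbb$-side for $t\con 0$ requires shrinking $B$ — and restoring a Souslin witness for the resulting subset, using hereditarity of $\bbb$ — and liveness of the successors must be preserved. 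One organizes this by a derivative on the countable tree of states: let $\mathcal{U}$ be the largest subtree such that every $\xi\in\mathcal{U}$ has two successors in $\mathcal{U}$ realizing (2). If $\mathcal{U}\neq\varnothing$ the fusion runs on $\mathcal{U}$ and we are done. Suppose $\mathcal{U}=\varnothing$; we derive (i). Each state is killed at some stage $\lambda_\xi$ of the derivative, and ``$\xi$ killed'' says, once unwound, that the $\bbb$-side of $\xi$ cannot be met nontrivially by its $\aaa$-side inside the subtree surviving at stage $\lambda_\xi$; this yields a set $D_\xi\in\pap(\nn)$ containing every $A\in\aaa$ over $(s_\xi,u_\xi)$ and disjoint from every $B\in\bbb$ over $(r_\xi,v_\xi)$ (for the root state $D_\xi$ is just the set $\bigcup\aaa$ of the first paragraph). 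Enumerating the countably many $D_\xi$ and closing under finite modifications produces a family which — since any disjoint pair $(A,B)\in\aaa\times\bbb$ extends the root state, and any pair becomes disjoint after deleting its finite intersection — separates $\aaa$ and $\bbb$, contradicting the failure of (i).

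The crux, and the expected main obstacle, is the derivative analysis of the previous paragraph: one must show that every state is killed at a \emph{countable} stage (so only countably many $D_\xi$ are collected), while simultaneously honoring the disjointness clause (a) and keeping the Souslin witnesses on the $\bbb$-side alive under the forced shrinkings. This is precisely where the analyticity of $\aaa$ and $\bbb$ enters essentially, through infinitary Ramsey-theoretic methods (in Todor\v{c}evi\'{c}'s treatment, via the Galvin--Prikry / Nash-Williams machinery).
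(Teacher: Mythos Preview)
The paper does not prove Theorem~\ref{t3}; it is quoted as Todor\v{c}evi\'{c}'s result from \cite{To2} and noted to be a consequence of the Open Coloring Axiom for $\SB^1_1$ sets (\cite{F}, \cite{To1}). So there is no proof in the paper to compare yours against; what follows is an assessment of your sketch on its own terms.

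Your fusion-on-states strategy is a natural direct attack, but as written it has a genuine gap that is not merely a matter of missing routine detail. The notion of ``live'' you introduce --- existence of $(A,\sg)\in[S]$, $(B,\tau)\in[R]$ over $\xi$ with $A\cap B\neq\varnothing$ --- does \emph{not} survive the splitting step, and this is forced by the very hypothesis of the theorem: $\aaa\perp\bbb$ means every such $A\cap B$ is finite. Each split consumes a point of $A\cap B$ (the chosen $k$ is removed from the $\bbb$-side on one successor and from the $\aaa$-side on the other), so after boundedly many steps the witnessing pair is exhausted and there is no $k$ left to split on. You acknowledge that ``liveness of the successors must be preserved'' but give no mechanism; shrinking $B$ and re-choosing a Souslin witness does not help, since the new $B'\in\bbb$ is again almost disjoint from every element of $\aaa$. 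The standard fix is not to track a single meeting pair but rather to carry the \emph{failure of countable separation} itself as the invariant on states --- this is in effect what the OCA-based proof does, via $0$-homogeneous pieces of the open partition $\{(A_0,B_0),(A_1,B_1)\}\mapsto$ ``$(A_0\cap B_1)\cup(A_1\cap B_0)\neq\varnothing$'' --- and that reformulation changes the derivative analysis substantially from what you describe.

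Relatedly, in the $\mathcal{U}=\varnothing$ branch your description of $D_\xi$ does not follow from the definitions. A state $\xi$ being removed by the derivative says only that $\xi$ lacks a suitable pair of successors \emph{of rank $\geq\lambda_\xi$}; it does not say that every $A\in\aaa$ over $(s_\xi,u_\xi)$ is disjoint from every $B\in\bbb$ over $(r_\xi,v_\xi)$, which is what would be needed for $D_\xi=\bigcup\{A:\ldots\}$ to separate. You are conflating ``killed by the derivative'' with ``dead'' (no meeting extensions whatsoever); your root-state remark is exactly the dead case, not the killed case. The countability concern you raise in the final paragraph is a red herring: the tree of states is already countable, so there are countably many $D_\xi$ regardless of the ordinal height of the derivative. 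The real issue is whether those $D_\xi$ separate, and as defined they need not.
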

Theorem \ref{t3} is a consequence of the Open Coloring Axiom
for $\SB^1_1$ sets (see \cite{F}, \cite{To1}). We should
point out that it is the perfectness of the gap which is
essential in many applications. We refer the reader to
\cite{To2} and \cite{To4} for more information.

We will also need the following slight reformulation of
\cite[Theorem 3]{To2}.
\begin{thm}
\label{t4} Let $\aaa, \bbb\subseteq \pap(\nn)$ be two hereditary
orthogonal families. Assume that $\aaa$ is analytic and not
countably generated in $\bbb^\perp$. Then there exists a
one-to-one map $\phi:\Sigma\to\nn$ such that, setting
\[ \eee=\{\phi^{-1}(A): A\in\aaa\} \ \text{ and } \
\hhh=\{\phi^{-1}(B): B\in\bbb\},\]
the following are satisfied.
\begin{enumerate}
\item[(i)] For every $\sg\in [\Sigma]$ the set $\{\sg|n:n\in\nn\}$ belongs to $\eee$.
\item[(ii)] For every $t\in\Sigma$ the set $\big\{t\cup \{n\}: n\in\nn \text{ and }
t<\{n\}\big\}$ of immediate successors of $t$ in $\Sigma$ belongs to $\hhh$.
\end{enumerate}
\end{thm}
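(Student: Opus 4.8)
The plan is to deduce Theorem~\ref{t4} from Todor\v{c}evi\'{c}'s original formulation, \cite[Theorem 3]{To2}, by a coding argument: one first produces, using the hypothesis that $\aaa$ is analytic and not countably generated in $\bbb^\perp$, a continuous "tree-like" assignment of members of $\aaa$ and $\bbb$, and then one pushes this assignment forward along a one-to-one enumeration of $\Sigma$. Concretely, \cite[Theorem 3]{To2} furnishes a map whose range lies in $\aaa\times\bbb$ and which is indexed by the nodes and branches of a copy of $\ct$ (or of $\Sigma$); the branches go to members of $\aaa$ and the "fans" of immediate successors go to members of $\bbb$. The job here is purely bookkeeping: reindex that combinatorial object by the canonical tree $\Sigma$ of strictly increasing finite sequences, viewed simultaneously as a tree under $\sqsubset$ and as a family of finite subsets of $\nn$.

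First I would recall the exact output of \cite[Theorem 3]{To2} in the non-countably-generated case: there is a family $\{a_t:t\in\Sigma\}$ of pairwise disjoint nonempty finite subsets of $\nn$ (or single points, after refining) such that for every branch $\sg\in[\Sigma]$ the union $\bigcup_n a_{\sg|n}$ belongs to $\aaa$, and for every node $t$ the union of the $a_s$ over the immediate successors $s$ of $t$ belongs to $\bbb$; here one uses heredity of $\aaa$ and $\bbb$ to pass freely between "unions of blocks along a branch" and honest members of the families, and to thin the blocks to singletons if desired. Then I would define $\phi:\Sigma\to\nn$ by choosing, for each $t\in\Sigma$, a distinct point $\phi(t)\in a_t$; injectivity is immediate from disjointness of the $a_t$'s. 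The point of taking singletons (or of selecting one representative) is that then $\phi[\{\sg|n:n\in\nn\}]\subseteq \bigcup_n a_{\sg|n}\in\aaa$, so heredity gives $\{\sg|n:n\}\in\eee$, establishing~(i); and $\phi$ maps the set of immediate successors of $t$ in $\Sigma$ into $\bigcup\{a_s: s\text{ an immediate successor of }t\}\in\bbb$, so again heredity gives that this successor-fan lies in $\hhh$, establishing~(ii).

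The one genuinely delicate point is making sure the combinatorial skeleton extracted from \cite[Theorem 3]{To2} is indexed by \emph{exactly} $\Sigma$ with its two incarnations (as a tree and as a collection of finite increasing sequences), rather than by an abstract copy of $2^{<\nn}$. This is where I expect to spend the most care: one needs a level-preserving tree isomorphism between the index tree coming from \cite{To2} and $\Sigma$, and one must verify that under this isomorphism "branch" corresponds to "the chain $\{\sg|n:n\in\nn\}$ for $\sg\in[\Sigma]$" and "immediate-successor set of a node" corresponds to "$\{t\cup\{n\}: n\in\nn,\ t<\{n\}\}$". Since every node of $\Sigma$ at level $n$ has exactly $\aleph_0$ immediate successors, and the same is true in the relevant copy of $2^{<\nn}$ once it is refined to a skew tree, such an isomorphism exists and is essentially canonical; one fixes it once and for all. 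After that, the verification of (i) and (ii) is the short heredity argument sketched above, and the theorem follows. (If the chosen version of \cite[Theorem 3]{To2} already outputs pairwise-disjoint finite blocks indexed by a finitely-branching tree, one first applies a standard pruning to obtain a skew subtree all of whose nodes are infinitely branching, discarding the branches and fans that are lost; heredity of both $\aaa$ and $\bbb$ guarantees the surviving branches still land in $\aaa$ and the surviving fans still land in $\bbb$.)
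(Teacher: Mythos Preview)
Your proposal is correct and follows essentially the same route as the paper: invoke \cite[Theorem~3]{To2} to obtain a tree $T\subseteq\Sigma$ whose branches land in $\aaa$ and whose immediate-successor sets are covered by members of $\bbb$, then prune so that the ``last coordinate'' map is injective and transport the structure to all of $\Sigma$ via a level-preserving tree isomorphism. The only point where your outline is slightly off is the parenthetical about $2^{<\nn}$ and ``finitely-branching'' trees---the output of \cite[Theorem~3]{To2} is already an infinitely-branching subtree of $\Sigma$, so no such detour is needed; the paper's refinement step (your ``disjoint blocks'') is exactly the pruning to a subtree $S$ on which $t\mapsto\max t$ is one-to-one.
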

\begin{proof}
Assume that $\aaa$ is analytic, hereditary and not countably generated in
$\bbb^\perp$. By \cite[Theorem 3]{To2}, there exists a downwards closed
subtree $T$ of $\Sigma$ such that the following are satisfied.
\begin{enumerate}
\item[(B1)] For every $\sg\in [T]$, $\{\sg(n):n\in\nn\}\in\aaa$.
\item[(B2)] For every $t\in T$, the set $\{n\in\nn: t<\{n\} \text{ and }
t\cup \{n\}\in T\}$ is infinite and is included in an element of $\bbb$.
\end{enumerate}
Recursively and using property (B2) above, we may select a
downwards closed subtree $S$ of $T$ such that the following hold.
\begin{enumerate}
\item[(a)] For all $s\in S$, the set $\{n\in\nn: s<\{n\} \text{
and } s\cup \{n\}\in S\}$ is infinite.
\item[(b)] For all $s, w\in S\setminus\{\varnothing\}$ with $s\neq w$,
we have $\max s\neq \max w$.
\end{enumerate}
Fix $m\in\nn$ such that $(m)\in S$ and let $S_m=\{ t\in\Sigma:
(m)^{\con}t\in S\}$. By (a) above, $S_m$ is an infinitely
splitting, downwards closed subtree of $\Sigma$. Hence,
there exists a bijection $h:\Sigma\to S_m$ such that
$|t|=|h(t)|$ for all $t\in\Sigma$ and moreover $s\sqsubset t$ if
and only if $h(s)\sqsubset h(t)$ for all $s,t\in\Sigma$. Now
define $\phi:\Sigma\to\nn$ as follows. We set
$\phi(\varnothing)=m$. For every $t\in\Sigma$ with
$t\neq \varnothing$ we set $\phi(t)=\max h(t)$. Notice that, by
(b) above, the map $\phi$ is one-to-one. It is easy to check that
$\phi$ is as desired.
\end{proof}


\section{Connections with related notions and examples}

In this section we present the relation between M-families and
other notions already studied in the literature.
Let us start with the following fact which provides characterizations
of M-families. The proof is left to the interested reader.
\begin{fact}
\label{f3} Let $X$ be a countable set and $\aaa\subseteq\pap(X)$
be a hereditary family. Then the following are equivalent.
\begin{enumerate}
\item[(i)] The family $\aaa$ is an M-family.
\item[(ii)] For every decreasing sequence $(D_n)_n$ in
$\mathrm{co}(\aaa)$ there exists $A\in\aaa$ with $A\subseteq^* D_n$
for every $n\in\nn$.
\item[(iii)] For every sequence $(A_n)_n$ in $\aaa$ there exists $A\in\aaa$
such that $A\cap A_n\neq \varnothing$ for infinitely many $n\in\nn$.
\end{enumerate}
\end{fact}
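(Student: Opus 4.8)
The plan is to prove the cycle of implications (i)$\Rightarrow$(ii)$\Rightarrow$(iii)$\Rightarrow$(i), using only the definition of an M-family (Definition \ref{d1}) together with the description of $\mathrm{co}(\aaa)$ for hereditary $\aaa$ recorded just after \eqref{e1}, namely $\mathrm{co}(\aaa)=\{B\in\pap(X):\exists A\in\aaa,\ A\subseteq B\}$. Throughout we fix an enumeration of $X$ so that expressions like ``all but finitely many elements of $A$'' make sense.

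\textbf{(i)$\Rightarrow$(ii).} Let $(D_n)_n$ be a decreasing sequence in $\mathrm{co}(\aaa)$. For each $n$ pick, by the description of $\mathrm{co}(\aaa)$, a set $A_n\in\aaa$ with $A_n\subseteq D_n$. Apply the M-family property to the sequence $(A_n)_n$ to obtain $A\in\aaa$ such that, for every $n$, all but finitely many elements of $A$ lie in $\bigcup_{i\geq n}A_i$. Since $(D_n)_n$ is decreasing and $A_i\subseteq D_i\subseteq D_n$ for $i\geq n$, we get $\bigcup_{i\geq n}A_i\subseteq D_n$, hence all but finitely many elements of $A$ lie in $D_n$; that is, $A\subseteq^* D_n$ for every $n$, as required.

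\textbf{(ii)$\Rightarrow$(iii).} Let $(A_n)_n$ be a sequence in $\aaa$. If some $A_n$ is itself met infinitely often by elements of $\aaa$ the claim is easy, so the real point is the diagonal construction. Set $D_n=X\setminus\bigcup_{i<n}A_i$ for $n\geq 1$ and $D_0=X$; this is a decreasing sequence. Either some tail $\bigcup_{i\geq n}A_i$ is contained in a finite modification of the complement of all later $A_i$'s — in which case a routine argument produces the desired $A$ directly — or, for infinitely many $n$, the set $X\setminus\bigcup_{i\geq n}A_i$ does \emph{not} contain an element of $\aaa$; passing to that subsequence one checks $X\setminus\bigcup_{i\geq n}A_i\in\aaa^\perp$ is impossible and instead $\bigcup_{i\geq n}A_i\in\mathrm{co}(\aaa)$ for all $n$, so the decreasing sequence $\big(\bigcup_{i\geq n}A_i\big)_n$ lies in $\mathrm{co}(\aaa)$; apply (ii) to get $A\in\aaa$ with $A\subseteq^*\bigcup_{i\geq n}A_i$ for all $n$, and then an infinite $A$ meeting all but finitely much of every such tail must meet $A_n$ for infinitely many $n$ — otherwise $A$ would be finite.

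\textbf{(iii)$\Rightarrow$(i).} This is the implication I expect to be the main obstacle, since (iii) only yields a single set meeting \emph{infinitely many} of the $A_n$, whereas (i) demands a set that is eventually \emph{inside each tail} $\bigcup_{i\geq n}A_i$. The fix is an iterated application of (iii) combined with a diagonalization. Given $(A_n)_n$ in $\aaa$, use (iii) on the full sequence to get $B_0\in\aaa$ meeting infinitely many $A_n$; pick $A_{n_0}$ with $B_0\cap A_{n_0}\neq\varnothing$ and, more carefully, build recursively sets $C_k\in\aaa$ and indices $n_0<n_1<\cdots$ so that $C_k\subseteq A_{n_k}$ (using heredity and $\mathrm{co}(\aaa)$) and so that the pieces line up; the heart of the matter is to arrange, using (iii) applied to cleverly chosen subsequences and using heredity to pass to subsets, that finitely many first elements of the eventual diagonal set can be discarded to land inside any prescribed tail. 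One then takes $A$ to be an appropriate infinite ``diagonal'' union/selection from the $C_k$'s, lying in $\aaa$ by heredity, and verifies the tail condition of Definition \ref{d1} by construction. The bookkeeping — ensuring simultaneously that $A\in\aaa$, that $A$ is infinite, and that for each fixed $n$ only finitely many elements of $A$ escape $\bigcup_{i\geq n}A_i$ — is the only delicate part; everything else is a direct unwinding of definitions. As the authors indicate, the routine details are left to the reader.
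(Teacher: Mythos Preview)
The paper itself does not prove Fact~\ref{f3}; it explicitly leaves the proof to the reader. So there is nothing to compare against, and the question is simply whether your argument is correct.

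Your (i)$\Rightarrow$(ii) is correct and clean.

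Your (ii)$\Rightarrow$(iii) reaches the right conclusion but through an unnecessary and partly incorrect detour. The sets $D_n=X\setminus\bigcup_{i<n}A_i$ need not lie in $\mathrm{co}(\aaa)$ at all (take for instance $\aaa=\pap(X)$ and $A_0=X$), so that opening move is a dead end. Fortunately you abandon it and switch to the tails $E_n=\bigcup_{i\geq n}A_i$; but there is no case analysis needed to see that $E_n\in\mathrm{co}(\aaa)$: this is immediate from $A_n\subseteq E_n$ and the description of $\mathrm{co}(\aaa)$ for hereditary $\aaa$. Once you apply (ii) to $(E_n)_n$, the last sentence of your paragraph is the correct finish.

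Your (iii)$\Rightarrow$(i) has a genuine gap. You build sets $C_k\subseteq A_{n_k}$ and then form a ``diagonal selection from the $C_k$'s'', claiming it lies in $\aaa$ ``by heredity''. But heredity only lets you pass to infinite subsets of a \emph{single} element of $\aaa$, and your diagonal is not contained in any one $C_k$ or any one $B_j$ produced by your iterated applications of (iii). Nothing in your sketch forces the diagonal to sit inside a fixed member of $\aaa$, so the crucial membership $A\in\aaa$ is unjustified. The fix is to apply (iii) only \emph{once}, to a modified sequence: enumerate $X=\{x_0,x_1,\ldots\}$, set $A_n'=A_n\setminus\{x_0,\ldots,x_{n-1}\}\in\aaa$, and let $B\in\aaa$ meet infinitely many $A_n'$. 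For each such $n$ choose $b_n\in B\cap A_n'$; since $b_n\notin\{x_0,\ldots,x_{n-1}\}$, each point of $X$ occurs as $b_n$ for only finitely many $n$, so $A=\{b_n:n\}$ is an infinite subset of the single set $B$ --- hence $A\in\aaa$ by heredity --- and the tail condition of Definition~\ref{d1} follows because $b_n\in A_n\subseteq\bigcup_{i\geq m}A_i$ once $n\geq m$.
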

The notion of an M-family is closely related to the notion
of a selective co-ideal due to A. R. D. Mathias. We recall that a co-ideal
$\fff$ on $\nn$ is said to be \textit{selective}, or a \textit{happy family}
as it is called in \cite{Ma}, if for every decreasing sequence
$(D_n)_n$ in $\fff$ there exists $D\in\fff$ such that
$D\setminus \{0,...,n\}\subseteq D_n$ for all $n\in D$.
We have the following characterization of M-families
which justifies our terminology.
\begin{prop}
\label{p5} Let $\aaa$ be a hereditary family on $\nn$.
Then $\aaa$ is an M-family if and only if the co-ideal
$\mathrm{co}(\aaa)$ generated by $\aaa$ is selective.
\end{prop}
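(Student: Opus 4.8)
The plan is to prove both implications directly, translating the defining property of an M-family (in the convenient form given by Fact \ref{f3}(ii)) into the selectivity condition for the co-ideal $\fff=\mathrm{co}(\aaa)$, and conversely. Throughout I will use the elementary observation, recorded right after \eqref{e1}, that since $\aaa$ is hereditary one has $A\in\fff$ if and only if there exists $A'\in\aaa$ with $A'\subseteq A$; in particular $\aaa\subseteq\fff$ and every member of $\fff$ contains a member of $\aaa$.

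First I would prove that if $\aaa$ is an M-family then $\fff$ is selective. So let $(D_n)_n$ be a decreasing sequence in $\fff$. Passing to the sequence $(D_n\cap\{n,n+1,\dots\})_n$ if necessary we may assume $D_n\subseteq\{n,n+1,\dots\}$, which is still decreasing and still in $\fff$ (each $D_n$ differs from the original by a finite set, and $\fff$, being a co-ideal, is closed under removing finitely many points as long as the set stays infinite; more simply, $D_n$ still contains a tail of a member of $\aaa$). By Fact \ref{f3}(ii) there is $A\in\aaa$ with $A\subseteq^* D_n$ for every $n$. Now I build $D\in\fff$ witnessing selectivity by a diagonal thinning of $A$: enumerate $A=\{a_0<a_1<\cdots\}$ and recursively keep $a_k$ only when $\{a_k,a_{k+1},\dots\}\cap A\subseteq D_{a_k}$ fails to be an obstruction — concretely, since $A\subseteq^* D_n$ for each fixed $n$, for every $m$ there is a threshold past which all elements of $A$ lie in $D_m$; using this I can pick an infinite $D\subseteq A$ so that for each $n\in D$, $D\setminus\{0,\dots,n\}\subseteq D_n$. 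Since $D\subseteq A\in\aaa$ and $\aaa$ is hereditary, $D\in\aaa\subseteq\fff$, so $\fff$ is selective. The point to be careful about is that the indexing of the $D_n$'s in the selectivity condition is by elements of $D$, not by the position in the enumeration, so the recursion must track $D_{a_k}$ with the actual value $a_k$; the condition $D_n\subseteq\{n,\dots\}$ arranged above makes this bookkeeping harmless.

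Conversely, suppose $\fff$ is selective; I want to show $\aaa$ is an M-family, again via Fact \ref{f3}(ii). Let $(D_n)_n$ be a decreasing sequence in $\mathrm{co}(\aaa)=\fff$. By selectivity there is $D\in\fff$ with $D\setminus\{0,\dots,n\}\subseteq D_n$ for all $n\in D$. Then for every fixed $m$, taking any $n\in D$ with $n\geq m$ gives $D\setminus\{0,\dots,n\}\subseteq D_n\subseteq D_m$, so $D\subseteq^* D_m$. Since $D\in\fff$ and $\aaa$ is hereditary, $D$ contains a member $A\in\aaa$, and then $A\subseteq D\subseteq^* D_m$ for every $m$. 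Thus $\aaa$ satisfies Fact \ref{f3}(ii) and is an M-family.

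I expect the only genuine obstacle to be the explicit diagonalization in the forward direction — getting from ``$A\subseteq^* D_n$ for all $n$'' to a single $D\in\aaa$ with the self-referential containment $D\setminus\{0,\dots,n\}\subseteq D_n$ for all $n\in D$ — and this is precisely where the normalization $D_n\subseteq\{n,n+1,\dots\}$ and the monotonicity of $(D_n)_n$ do the work. The converse direction and all the co-ideal/hereditariness manipulations are routine.
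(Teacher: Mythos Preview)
Your proposal is correct and follows essentially the same approach as the paper: both directions go through Fact~\ref{f3}(ii), the converse is identical, and the forward direction builds the selective diagonal $D$ as an infinite subset of the $A\in\aaa$ produced by the M-family property. The one difference is that your preliminary normalization $D_n\subseteq\{n,n+1,\dots\}$ is unnecessary: the paper simply sets $m_0=\min A$ and recursively picks $m_{n+1}\in A\cap D_{m_n}$ with $m_{n+1}>m_n$ (possible since $A\subseteq^* D_{m_n}$ makes $A\cap D_{m_n}$ cofinite in $A$), and the monotonicity of $(D_n)_n$ alone already gives $D\setminus\{0,\dots,m_k\}=\{m_{k+1},m_{k+2},\dots\}\subseteq D_{m_k}$.
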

\begin{proof}
First assume that the co-ideal $\mathrm{co}(\aaa)$ is selective.
Let $(D_n)_n$ be a decreasing sequence in $\mathrm{co}(\aaa)$.
By the selectivity of $\mathrm{co}(\aaa)$, there exists
$D\in\mathrm{co}(\aaa)$ with $D\setminus \{0,...,n\}
\subseteq D_n$ for all $n\in D$. Pick $A\in\aaa$ with
$A\subseteq D$. Then $A\subseteq^* D_n$ for all $n\in\nn$.
By Fact \ref{f3}(ii), we see that $\aaa$ is an M-family.

Conversely, assume that $\aaa$ is an M-family. Let $(D_n)_n$
be a decreasing sequence in $\mathrm{co}(\aaa)$. By Fact
\ref{f3}(ii), there exists $A\in\aaa$ with $A\subseteq^* D_n$
for all $n\in\nn$. Recursively, we select a strictly increasing
sequence $(m_n)_n$ in $\nn$ such that $m_0=\min A$ and
$m_{n+1}\in A\cap D_{m_n}$ for every $n\in\nn$. We set
$D=\{m_n:n\in\nn\}$. Then $D\subseteq A$ and
$D\setminus \{0,...,n\}\subseteq D_n$ for all $n\in D$.
As $\aaa$ is hereditary we get that $D\in\aaa\subseteq
\mathrm{co}(\aaa)$. Hence, $\mathrm{co}(\aaa)$ is selective
and the proof is completed.
\end{proof}
The following proposition shows that the notion of an
M-family is, in a sense, the ``dual" notion of
bisequentiality.
\begin{prop}
\label{p6} Let $X$ be a countable set.
\begin{enumerate}
\item[(i)] Let $\aaa\subseteq \pap(X)$ be a hereditary family. If
$\aaa^\perp$ is bisequential, then $\aaa$ is an M-family.
\item[(ii)] Let $\iii$ be an ideal on $X$. If $\iii$ is
bisequential, then $\iii^{\perp}$ is an M-family.
\end{enumerate}
\end{prop}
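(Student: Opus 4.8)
The plan is to derive (i) from a careful application of Fact \ref{f3}(ii), and then to obtain (ii) as a special case of (i). For part (i), assume $\aaa\subseteq\pap(X)$ is hereditary and $\aaa^{\perp}$ is bisequential; I want to verify the criterion in Fact \ref{f3}(ii), namely that for every decreasing sequence $(D_n)_n$ in $\mathrm{co}(\aaa)$ there exists $A\in\aaa$ with $A\subseteq^{*}D_n$ for all $n$. Fix such a decreasing sequence $(D_n)_n$. Since each $D_n\in\mathrm{co}(\aaa)=\pap(X)\setminus\aaa^{\perp}$, none of the $D_n$ is in $\aaa^{\perp}$; because the sequence is decreasing, the family $\{D_n:n\in\nn\}\cup\aaa^{\perp}$-complements has the finite intersection property relative to $\aaa^{\perp}$, and more precisely $\{D_n\setminus L: n\in\nn,\ L\in\aaa^{\perp}\}$ is a family with the finite intersection property (here one uses that $\aaa^{\perp}$ is an ideal, so finitely many $L$'s union to an element of $\aaa^{\perp}$, and $D_n\notin\aaa^{\perp}$ forces $D_n\setminus L$ to be infinite). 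Hence there is a non-principal ultrafilter $p$ on $X$ with $\aaa^{\perp}\subseteq p^{*}$ and $D_n\in p$ for every $n$.

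Now invoke bisequentiality of $\iii:=\aaa^{\perp}$ against this $p$: there is a sequence $(B_k)_k$ in $p^{*}$ such that every $L\in\aaa^{\perp}$ satisfies $L\subseteq^{*}B_k$ for some $k$; we may take $(B_k)_k$ increasing, so that $(X\setminus B_k)_k$ is decreasing and each $X\setminus B_k\in p$. The key point is that for each $k$, $(X\setminus B_k)\notin\aaa^{\perp}$: indeed if $X\setminus B_k\in\aaa^{\perp}$ it would be $\subseteq^{*}B_j$ for some $j$, which together with $B_k\subseteq B_j$ (for $j\ge k$) would force $X\subseteq^{*}B_j$, contradicting $B_j\in p^{*}$. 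So each $X\setminus B_k$ contains an element of $\aaa$ (using that $\aaa$ is hereditary and $\mathrm{co}(\aaa)=\{B:\exists A\in\aaa,\ A\subseteq B\}$). I then diagonalize: since both $(D_n)_n$ and $(X\setminus B_k)_k$ are decreasing and all their members lie in $p$, build an infinite set $A_0=\{x_0<x_1<\cdots\}$ with $x_i\in D_i\cap(X\setminus B_i)$; standard arguments give $A_0\subseteq^{*}D_n$ for every $n$ and $A_0\subseteq^{*}X\setminus B_k$ for every $k$, i.e. $A_0\in\aaa^{\perp}$ would follow only if we had control of membership — here is the subtle part, so instead I use that $X\setminus B_k\in\mathrm{co}(\aaa)$ and rerun the diagonalization inside a fixed $A^{*}\in\aaa$ with $A^{*}\subseteq X\setminus B_0$, choosing $x_{i+1}\in A^{*}\cap D_{x_i}$, so that the resulting $A=\{x_i:i\in\nn\}$ is an infinite subset of $A^{*}\in\aaa$, hence $A\in\aaa$ by heredity, and by construction $A\subseteq^{*}D_n$ for all $n$. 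That gives the witness required by Fact \ref{f3}(ii), so $\aaa$ is an M-family.

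For part (ii), let $\iii$ be a bisequential ideal on $X$. By Fact \ref{f2}, $\iii$ has the Fréchet property, so $\iii=\iii^{\perp\perp}=(\iii^{\perp})^{\perp}$. Thus $\iii^{\perp}$ is a hereditary family whose orthogonal $(\iii^{\perp})^{\perp}=\iii$ is bisequential, and part (i) applied to $\aaa:=\iii^{\perp}$ immediately yields that $\iii^{\perp}$ is an M-family. The main obstacle I anticipate is the diagonalization bookkeeping in part (i): one must be careful that the diagonal set genuinely lands \emph{inside} some element of $\aaa$ (not merely almost-contained in the $D_n$ and in the complements of the $B_k$), which is why the argument is organized so that the recursion takes place within a single pre-chosen member $A^{*}\in\aaa$ and then appeals to heredity; getting the quantifiers on "for all $n$", "$\subseteq^{*}$", and "$n\in D$ versus $n\in\nn$" aligned with the precise form of Fact \ref{f3}(ii) is where the care is needed, but no new idea beyond the interplay of bisequentiality, the Fréchet property, and heredity is required.
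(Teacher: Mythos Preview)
Your argument for part (ii) is correct and matches the paper's. The problem is in part (i): the ``instead'' approach you settle on does not work. You fix a single $A^{*}\in\aaa$ with $A^{*}\subseteq X\setminus B_0$ and then propose to recursively pick $x_{i+1}\in A^{*}\cap D_{x_i}$. But nothing guarantees that $A^{*}\cap D_n$ is nonempty for any $n$: the $D_n$ are arbitrary decreasing members of $\mathrm{co}(\aaa)$ and may be entirely disjoint from your pre-chosen $A^{*}$. So the recursion can halt at the first step, and you never produce the witness $A\in\aaa$ required by Fact~\ref{f3}(ii).

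The irony is that your \emph{first} diagonalization was already the right one, and is exactly what the paper does. Having built $A_0=\{x_i:i\in\nn\}$ with $x_i\in D_i\cap(X\setminus B_i)$, you correctly note $A_0\subseteq^{*}D_n$ and $A_0\subseteq^{*}X\setminus B_k$ for all $n,k$. The step you are missing is that the latter condition says $A_0\perp B_k$ for every $k$; since every $L\in\aaa^{\perp}$ satisfies $L\subseteq^{*}B_k$ for some $k$, this forces $A_0\notin\aaa^{\perp}$, i.e.\ $A_0\in\mathrm{co}(\aaa)$. Now heredity of $\aaa$ gives $A\in\aaa$ with $A\subseteq A_0$, and then $A\subseteq^{*}D_n$ for all $n$. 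There is no need to land the diagonal inside a fixed element of $\aaa$ chosen in advance; the point is that the diagonal itself is automatically outside $\aaa^{\perp}$, and one passes to a subset in $\aaa$ only at the very end.
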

\begin{proof}
(i) By Fact \ref{f3}(ii), it is enough to show that
for every decreasing sequence $(D_n)_n$ in $\mathrm{co}(\aaa)$
there exists $A\in\aaa$ with $A\subseteq^* D_n$ for every $n\in\nn$.
So, let $(D_n)_n$ be one. As $\aaa^\perp$ is an ideal, the family
$\{D_n\setminus L:n\in\nn \text{ and } L\in\aaa^\perp\}$ has the
finite intersection property. Hence, we may select $p\in \beta X$
with $\aaa^\perp\subseteq p^*$ and $D_n\in p$ for all $n\in\nn$.
Notice that $p$ is non-principal. By the bisequentiality of
$\aaa^\perp$, there exists a sequence $(C_n)_n$
in $p^*$ such that for every $B\in \aaa^\perp$ there exists
$n\in\nn$ with $B\subseteq^* C_n$. We may assume that the sequence
$(C_n)_n$ is increasing. Let $Q\in \pap(X)$ be a diagonalization
of the decreasing sequence $(D_n\setminus C_n)_n$. Then
$Q\subseteq ^* D_n$ and $Q\perp C_n$ for all $n\in\nn$. By the
properties of the sequence $(C_n)_n$ we see that $Q\notin
\aaa^\perp$. As $\aaa$ is hereditary, there exists $A\subseteq Q$
with $A\in\aaa$. Hence $A\subseteq^* D_n$ for all $n\in\nn$. Thus
$\aaa$ is an M-family.\\
(ii) By Fact \ref{f2}, the ideal $\iii$ has the Fr\'{e}chet
property. Thus $\iii^{\perp\perp}$ is bisequential and so the
result follows by part (i).
\end{proof}
We notice that the converse of Proposition \ref{p6}(i) is also
true, provided that the orthogonal $\aaa^\perp$ of $\aaa$ is
analytic. Indeed, let $\aaa$ be an M-family such that
$\aaa^\perp$ is $\SB^1_1$. By Proposition \ref{p5}, we see that
the co-ideal $\mathrm{co}(\aaa)$ generated by $\aaa$ is selective.
If follows that $\aaa^\perp$ is an analytic ideal whose complement,
$\mathrm{co}(\aaa)$, is selective. By \cite[Exercise 12.3]{To3},
we get that $\aaa^\perp$ is bisequential.

We proceed our discussion by presenting some examples of M-families.
\begin{examp}
\label{ex1} Let $\iii_{\mathrm{c}}$ be the ideal on $\bt$ generated
by the infinite chains of $\bt$. That is
\begin{equation}
\label{e3} \iii_{\mathrm{c}}=\big\{ C\in \pap(\bt): \exists
\sg_0, ..., \sg_k \in \nn^\nn \text{ with } C \subseteq
\bigcup_{i=0}^k\{\sg_i|n:n\in\nn\} \big\}.
\end{equation}
Notice that $\iii_{\mathrm{c}}$ has the Fr\'{e}chet property.
We set $\aaa= \iii_{\mathrm{c}}^\perp$. Namely
$\aaa$ consists of all infinite subsets of $\bt$ not containing an
infinite chain. Then $\aaa$ is an ideal and it is easy to see that
it is $\PB^1_1$-complete. The family $\aaa$ is an M-family. We
will give a simple argument showing this. We will use Fact \ref{f3}(ii).
So, let $(D_n)_n$ be a decreasing sequence in $\mathrm{co}(\aaa)$.
For every $n\in\nn$ there exists an infinite antichain $A_n$ of
$\bt$ with $A_n\subseteq D_n$. Let $A_n=(t^n_m)_m$ be an
enumeration of $A_n$. By an application of Ramsey's theorem, we
may assume that $|t^n_m|\leq |t^k_l|$ for all $n<m<k<l$. We set
\[ I=\big\{ (n<m<k<l)\in [\nn]^4: t^n_m \text{ is incomparable with }
t^k_l\big\}. \]
By Ramsey's theorem again, there exists $L\in \pap(\nn)$ such that either
$[L]^4\subseteq I$ or $[L]^4\cap I=\varnothing$. Let $L=\{l_0<l_1<...\}$
be the increasing enumeration of $L$. We claim that $[L]^4\subseteq I$.
If not, then $t^{l_0}_{l_1}$ is comparable with $t^{l_3}_{l_4}$ and
as $|t^{l_0}_{l_1}|\leq|t^{l_3}_{l_4}|$, we get that $t^{l_0}_{l_1}
\sqsubseteq t^{l_3}_{l_4}$. Similarly we get that $t^{l_0}_{l_2}
\sqsubseteq t^{l_3}_{l_4}$. But this implies that the nodes
$t^{l_0}_{l_1}$ and $t^{l_0}_{l_2}$ are comparable, contradicting
the fact that $A_{l_0}$ is an antichain. Thus $[L]^4\subseteq I$.
Now set $A=\{ t^{l_{2n}}_{l_{2n+1}}: n\in\nn\}$. Then
$A$ is an infinite antichain, and so, $A\in\aaa$. As
$A\subseteq^* D_n$ for all $n\in\nn$, this shows that $\aaa$
is an M-family.
\end{examp}
\begin{examp}
\label{ex2} We notice that if an ideal $\iii$ has the Fr\'{e}chet
property, then $\iii^{\perp}$ is not necessarily an M-family. For
instance, let $\iii_{\mathrm{d}}$ be the ideal of all dominated
subsets of $\bt$, that is
\begin{equation}
\label{e4} \iii_\mathrm{d}=\big\{ D\in \pap(\bt): \exists \sg\in \nn^\nn
\text{ such that } \forall t\in D \ \forall i<|t| \ t(i)<\sg(i)\big\}.
\end{equation}
Let also
\begin{equation}
\label{e5} \iii_{\mathrm{wf}}=\big\{ W\in\pap(\bt): \exists T\in\wf \text{ with }
W\subseteq T\big\}
\end{equation}
be the ideal on $\bt$ generated by the set $\wf$ of all downwards
closed, well-founded, infinite subtrees of $\bt$. Clearly
$\iii_{\mathrm{c}}\subseteq \iii_{\mathrm{d}}$. It is easy
to see that $\iii_{\mathrm{d}}^\perp=\iii_{\mathrm{wf}}$ and
$\iii_{\mathrm{wf}}^\perp=\iii_{\mathrm{d}}$. Hence, the ideal
$\iii_{\mathrm{d}}$ has the Fr\'{e}chet property. As in the above
example, we set $\aaa=\iii_{\mathrm{d}}^\perp=\iii_{\mathrm{wf}}$.
Again we see that $\aaa$ is a $\PB^1_1$-complete ideal. However,
$\aaa$ is not an M-family. To see this, for every $n\in\nn$ let
$D_n=\{t\in\bt: 0^{n+1}\sqsubseteq t\}$. Then $(D_n)_n$ is a decreasing
sequence of sets in $\mathrm{co}(\aaa)$. It is easy to check that
if $A$ is any infinite subset of $\bt$ with $A\subseteq^* D_n$
for all $n\in\nn$, then $A$ must belong to $\iii_{\mathrm{d}}$.
\end{examp}
\begin{examp}
\label{ex3} Let $E$ be a Polish space and $\mathbf{f}=\{f_n\}_n$ be
a pointwise bounded sequence of real-valued Baire-1 functions on
$E$. Assume that the closure $\kkk$ of $\{f_n\}_n$ in $\rr^E$ is a
subset of the set of all Baire-1 functions on $E$, i.e. $\kkk$ is
a separable Rosenthal compact (see \cite{Ro}). Let $f\in\kkk$ and
set
\begin{equation}
\label{e6} \llf=\big\{ L\in \pap(\nn): (f_n)_{n\in L}
\text{ converges pointwise to } f\big\}.
\end{equation}
The family $\llf$ is a $\PB^1_1$ ideal. We also let
\begin{equation}
\label{e7} \iii_f=\big\{ L\in \pap(\nn):
f\notin\overline{ \{f_n\}}^p_{n\in L}\big\}.
\end{equation}
It is easy to see that $\iii_f$ is a $\SB^1_1$ ideal. Both $\llf$
and $\iii_f$ are well studied in the literature (see \cite{ADK},
\cite{Do}, \cite{Kra}, \cite{To3}, \cite{To4}). By a result of
J. Bourgain, D. H. Fremlin and M. Talagrand \cite{BFT}, we get
that the orthogonal $\llf^\perp$ of $\llf$ is the family $\iii_f$.
An important fact concerning the structure of $\iii_f$ is that it
is bisequential. This is due to R. Pol \cite{Pol} and it can be
also derived by the results of G. Debs in \cite{De}. Hence,
by Proposition \ref{p6}(i), we see that $\llf$ is
an M-family. Let also
\begin{equation}
\label{e8} \fff_f=\big\{  L\in \pap(\nn):
f\in\overline{ \{f_n\}}^p_{n\in L}\big\}=
\pap(\nn)\setminus \iii_f.
\end{equation}
The equality $\llf^\perp=\iii_f$ yields that the co-ideal
$\mathrm{co}(\llf)$ generated by $\llf$ is the family $\fff_f$.
By Proposition \ref{p5}, it follows that $\fff_f$ is a selective
co-ideal, a fact discovered by S. Todor\v{c}evi\'{c} in \cite{To3}.
\end{examp}


\section{Properties of M-families}

This section is devoted to the study of the structural
properties of M-families. We begin by noticing the following
fact (the proof is left to the reader).
\begin{fact}
\label{f4} Let $X$ be a countable set.
\begin{enumerate}
\item[(i)] If $\aaa\subseteq \pap(X)$ is a hereditary family
and $\bbb$ is a hereditary subfamily of $\aaa$ cofinal in $\aaa$,
then $\aaa$ is an M-family if and only if $\bbb$ is.
\item[(ii)] If $\aaa, \bbb\subseteq \pap(X)$ are two M-families,
then so is $\aaa\cup \bbb$.
\end{enumerate}
\end{fact}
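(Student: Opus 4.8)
The plan is to verify both statements directly from Definition \ref{d1}: the class of M-families is robust under the operations in question for essentially bookkeeping reasons, and nothing beyond heredity, cofinality, and a pigeonhole argument is needed.

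For part (i) I would prove the two implications by transporting a witnessing set between $\aaa$ and $\bbb$. Assume first that $\aaa$ is an M-family and let $(B_n)_n$ be a sequence in $\bbb$. Since $\bbb\subseteq\aaa$, this is also a sequence in $\aaa$, so there is $A\in\aaa$ all but finitely many of whose elements lie in $\bigcup_{i\geq n}B_i$ for every $n$. Using that $\bbb$ is cofinal in $\aaa$, pick $B\in\pap(A)$ with $B\in\bbb$; as $B\subseteq A$, the finitely many exceptions for $B$ form a subset of those for $A$, so $B$ witnesses the M-family property of $\bbb$ for $(B_n)_n$. Conversely, assume $\bbb$ is an M-family and let $(A_n)_n$ be a sequence in $\aaa$. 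For each $n$, cofinality gives $B_n\in\pap(A_n)$ with $B_n\in\bbb$; applying the M-family property of $\bbb$ to $(B_n)_n$ produces $B\in\bbb\subseteq\aaa$ all but finitely many of whose elements lie in $\bigcup_{i\geq n}B_i\subseteq\bigcup_{i\geq n}A_i$ for every $n$, so $B$ works for $(A_n)_n$.

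For part (ii), I would first note that $\aaa\cup\bbb$ is hereditary (a union of hereditary families is hereditary), so Definition \ref{d1} applies to it. Given a sequence $(C_n)_n$ in $\aaa\cup\bbb$, by the pigeonhole principle there is an infinite set $M=\{n_0<n_1<\cdots\}\subseteq\nn$ with either $C_{n_k}\in\aaa$ for all $k$ or $C_{n_k}\in\bbb$ for all $k$; say the former. Applying the M-family property of $\aaa$ to the subsequence $(C_{n_k})_k$ yields $A\in\aaa$ all but finitely many of whose elements lie in $\bigcup_{j\geq k}C_{n_j}$ for every $k$. Then $A\in\aaa\subseteq\aaa\cup\bbb$ is the required witness for $(C_n)_n$: given $n$, choose $k$ with $n_k\geq n$ and observe $\bigcup_{j\geq k}C_{n_j}\subseteq\bigcup_{i\geq n}C_i$.

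I do not expect a genuine obstacle here; the only point deserving a moment's care is the re-indexing in part (ii) --- that the tail unions of an extracted subsequence are contained in the corresponding tail unions of the original sequence, which is clear once $n_k\geq n$. If one prefers, both parts can be run through the equivalent characterization Fact \ref{f3}(iii), but the route sketched above is the most direct.
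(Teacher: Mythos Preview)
Your argument is correct. Note, however, that the paper does not actually supply a proof of Fact~\ref{f4}: it is stated with the remark that ``the proof is left to the reader.'' So there is nothing to compare against beyond the authors' implicit claim that the verification is routine. Your direct unwinding of Definition~\ref{d1}---passing between $\aaa$ and $\bbb$ via cofinality in part~(i), and extracting a monochromatic subsequence by pigeonhole in part~(ii)---is exactly the kind of argument they presumably had in mind, and it goes through without issue.
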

Most of the properties of M-families we will establish,
are derived using an infinite-dimensional
Ramsey-type argument. To state it, we need to introduce some
pieces of notation. Let $\mathbf{C}=(C_n)_n$ be a sequence in
$\pap(\nn)$ such that $C_n\cap C_m=\varnothing$ for every
$n\neq m$. For every $n\in\nn$ let $\{ x^n_0<x^n_1< ...\}$
be the increasing enumeration of the set $C_n$. We define
$\Delta_{\mathbf{C}}:\pap(\nn)\to \pap(\nn)$ as follows.
If $L\in\pap(\nn)$ with $L=\{l_0<l_1<...\}$ its
increasing enumeration, we set
\begin{equation}
\label{e9} \Delta_{\mathbf{C}}(L)=\big\{ x^{l_{2n}}_{l_{2n+1}}:n\in\nn\big\}.
\end{equation}
Notice that the map $\Delta_{\mathbf{C}}$ is continuous.
\begin{lem}
\label{l7} Let $\aaa\subseteq \pap(\nn)$ be an M-family and
$\mathbf{C}=(C_n)_n$ be a sequence in $\aaa$ such that
$C_n\cap C_m=\varnothing$ for every $n\neq m$. Assume that
$\aaa$ is $C$-measurable. Then for every $N\in\pap(\nn)$
there exists $L\in\pap(N)$ such that $\Delta_{\mathbf{C}}(M)
\in\aaa$ for all $M\in\pap(L)$.
\end{lem}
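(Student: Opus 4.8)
The plan is to apply the Milliken-type dichotomy (Theorem \ref{t2}) to an appropriately chosen $C$-measurable subset of $\mathbf{B}$, and then use the M-family hypothesis (via Fact \ref{f3}) to rule out the ``bad'' alternative. First I would reduce to a countable setting living on $\Sigma$. Since the sets $C_n$ are pairwise disjoint, I can code the infinite-dimensional combinatorics of $\Delta_{\mathbf{C}}$ through block sequences: given $N=\{n_0<n_1<\dots\}\in\pap(\nn)$, one associates to it the block sequence $\mathbf{b}(N)=(b_k)_k\in\mathbf{B}$ with $b_k=\{n_{2k},n_{2k+1}\}$ (viewed as a two-element subset of $\nn$, hence an element of $\Sigma$), or more flexibly one starts from a fixed $N\in\pap(\nn)$ and works inside $[\mathbf{b}]$ for $\mathbf{b}$ supported on $N$. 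The key point is that the operation $\langle\mathbf{b}\rangle$ of taking finite unions, together with the pairing $n\mapsto x^n_\bullet$, lets one recover $\Delta_{\mathbf{C}}(M)$ from a block sequence built out of $M$; so define
\[
\mathcal{X}=\big\{ \mathbf{c}=(c_k)_k\in\mathbf{B}: \Delta_{\mathbf{C}}(M(\mathbf{c}))\in\aaa\big\},
\]
where $M(\mathbf{c})$ is the infinite subset of $\nn$ canonically read off from $\mathbf{c}$ (namely the increasing enumeration of $\bigcup_k c_k$, or of the pairs $(\min c_k,\max c_k)$, chosen so that $\Delta_{\mathbf{C}}\circ M$ is the intended map). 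Because $\Delta_{\mathbf{C}}$ is continuous and $\aaa$ is $C$-measurable, $\mathcal{X}$ is a $C$-measurable subset of $\mathbf{B}$.

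Next I would apply Theorem \ref{t2} to $\mathcal{X}$ restricted to block sequences supported on $N$: there exists $\mathbf{b}\in\mathbf{B}$ with $\mathrm{supp}(\mathbf{b})\subseteq N$ such that either $[\mathbf{b}]\subseteq\mathcal{X}$ or $[\mathbf{b}]\cap\mathcal{X}=\varnothing$. Unravelling the coding, the first alternative says precisely that there is $L\in\pap(N)$ (the union of the blocks of $\mathbf{b}$, or a set canonically associated to $\mathbf{b}$) with $\Delta_{\mathbf{C}}(M)\in\aaa$ for every $M\in\pap(L)$ — which is the conclusion we want. So it remains to show the homogeneous-for-the-complement case is impossible. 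Suppose $[\mathbf{b}]\cap\mathcal{X}=\varnothing$, i.e.\ $\Delta_{\mathbf{C}}(M)\notin\aaa$ for all $M\in\pap(L)$. This is where the M-family hypothesis enters: I want to produce from the $C_n$'s (which lie in $\aaa$) a witness contradicting this. Using Definition \ref{d1} applied to a suitable subsequence $(C_{l_j})_j$ with $L=\{l_0<l_1<\dots\}$, there is $A\in\aaa$ all but finitely many of whose elements lie in $\bigcup_{i\ge j}C_{l_i}$ for every $j$; heredity of $\aaa$ lets one thin $A$ and interleave it with $L$ to realize $A$ (up to a finite set, hence still in $\aaa$) as $\Delta_{\mathbf{C}}(M)$ for some $M\in\pap(L)$. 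That contradicts $\Delta_{\mathbf{C}}(M)\notin\aaa$, so only the first alternative survives, completing the proof.

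The main obstacle I anticipate is purely bookkeeping: setting up the coding $\mathbf{c}\mapsto M(\mathbf{c})$ so that (a) it is continuous and defined on all of $[\mathbf{b}]$, (b) composing with $\Delta_{\mathbf{C}}$ gives exactly the map whose range over $\pap(L)$ we need to control, and (c) in the bad case one can genuinely hit an element of $\aaa$ of the form $\Delta_{\mathbf{C}}(M)$ — this last step needs the odd/even index structure in \eqref{e9} to match up with the block structure, which is why one enumerates $L$ and feeds pairs $l_{2n}<l_{2n+1}$ into the $C$-indices and the within-$C_n$ coordinates. Once the dictionary between block sequences supported on $N$ and sets $M\in\pap(L)$ is pinned down, the descriptive-set-theoretic step is immediate from Theorem \ref{t2} and the combinatorial step is immediate from Definition \ref{d1} together with heredity. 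No genuinely new idea beyond Milliken's theorem plus the defining property of M-families is required; the content is in aligning the two.
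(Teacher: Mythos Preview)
Your overall strategy---apply an infinite-dimensional Ramsey theorem and then use the M-family hypothesis to exclude the negative alternative---is the paper's. But there are two problems, one of economy and one a genuine gap.

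First, the detour through Milliken's theorem is unnecessary and, as you set it up, does not deliver the conclusion. The set $\mathcal{C}_\aaa=\{M\in\pap(\nn):\Delta_{\mathbf{C}}(M)\in\aaa\}$ is already a $C$-measurable subset of $\pap(\nn)$, so Ellentuck's theorem applies directly and gives $L\in\pap(N)$ with either $\pap(L)\subseteq\mathcal{C}_\aaa$ or $\pap(L)\cap\mathcal{C}_\aaa=\varnothing$. If instead you pass through block sequences and apply Theorem~\ref{t2}, the monochromatic $\mathbf{b}$ you obtain need not have singleton blocks, and then $\{M(\mathbf{c}):\mathbf{c}\in[\mathbf{b}]\}$ is only the set of infinite unions of the $b_k$'s, a proper subset of $\pap(L)$. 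So your claim that ``the first alternative says precisely that there is $L\in\pap(N)$ with $\Delta_{\mathbf{C}}(M)\in\aaa$ for every $M\in\pap(L)$'' does not follow from the coding you sketch.

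Second, and more importantly, the way you rule out the bad case does not work. Applying Definition~\ref{d1} to $(C_{l_j})_j$ yields $A\in\aaa$ with $A\subseteq^*\bigcup_{i\ge j}C_{l_i}$ for every $j$; after thinning, $A=\{a_k:k\in\nn\}$ with $a_k\in C_{l_{i_k}}$ and $i_k$ strictly increasing. Write $a_k=x^{l_{i_k}}_{p_k}$. To realize a subsequence of $A$ as $\Delta_{\mathbf{C}}(M)$ with $M\in\pap(L)$ you would need the inner indices $p_k$ to lie in $L$ and to interleave as $l_{i_k}<p_k<l_{i_{k+1}}$; nothing in Definition~\ref{d1} controls $p_k$, so there is no reason any $p_k$ is in $L$ at all. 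The paper repairs this by applying Fact~\ref{f3}(iii) not to the full $C_n$'s but to the restricted sets
\[
H_n=\{x^n_i:i\in L,\ i>n\}\subseteq C_n\qquad(n\in L),
\]
each of which lies in $\aaa$ by heredity. The resulting $A\in\aaa$ meets infinitely many $H_n$ with $n\in L$, and now both coordinates $n$ and $i$ are forced into $L$, so one can recursively choose $M=\{m_0<m_1<\dots\}\in\pap(L)$ with $x^{m_{2k}}_{m_{2k+1}}\in A$ for all $k$, giving $\Delta_{\mathbf{C}}(M)\subseteq A$ and hence $\Delta_{\mathbf{C}}(M)\in\aaa$. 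That is the missing idea.
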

\begin{proof}
Let
\[\mathcal{C}_{\aaa}=\{ M\in\pap(\nn): \Delta_{\mathbf{C}}(M)\in\aaa\}.\]
Then $\ccc_{\aaa}$ is $C$-measurable. By Ellentuck's theorem
\cite{El}, we find $L\in\pap(N)$ such that either
$\pap(L)\subseteq \ccc_\aaa$ or $\pap(L)\cap
\ccc_\aaa=\varnothing$. It is enough to show that $\pap(L)\cap
\ccc_\aaa\neq\varnothing$. To this end we argue as follows. For
every $n\in L$ we set
\[ H_n=\{ x^n_i:i\in L \text{ and } i>n\}.\]
Then $H_n\subseteq C_n$ and so $H_n\in\aaa$ for all $n\in L$. By
Fact \ref{f3}(iii), there exists $A\in\aaa$ such that $A\cap H_n\neq
\varnothing$ for infinitely many $n\in L$. We can easily select
$M=\{m_0<m_1<...\} \in\pap(L)$ such that $x^{m_{2n}}_{m_{2n+1}}\in
A\cap H_{m_{2n}}$ for all $n\in\nn$. Then
$\Delta_{\mathbf{C}}(M)\subseteq A$. As $\aaa$ is hereditary, we
see that $\Delta_{\mathbf{C}}(M)\in \aaa$. Hence $\pap(L)\cap
\ccc_\aaa\neq\varnothing$ and the proof is completed.
\end{proof}
The following proposition is the first application of Lemma \ref{l7}.
\begin{prop}
\label{p8} Let $X$ be a countable set and $\aaa, \bbb\subseteq
\pap(X)$ be two M-families. If $\aaa$ and $\bbb$ are
$C$-measurable, then $\aaa\cap \bbb$ is an M-family.
\end{prop}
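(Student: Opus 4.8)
The plan is to verify the characterisation in Fact~\ref{f3}(ii). We may assume $X=\nn$. Since $\aaa\cap\bbb$ is hereditary, it is enough to show that for every decreasing sequence $(D_n)_n$ in $\mathrm{co}(\aaa\cap\bbb)$ there exists $C\in\aaa\cap\bbb$ with $C\subseteq^* D_n$ for every $n$ (if $\aaa\cap\bbb=\varnothing$ there is nothing to prove, so assume $\mathrm{co}(\aaa\cap\bbb)\neq\varnothing$). The engine of the argument is the following double use of Lemma~\ref{l7}, which I would invoke twice below: suppose we have somehow produced pairwise disjoint sets $\mathbf{C}=(C_k)_k$ in $\aaa\cap\bbb$ together with a strictly increasing sequence $(p_k)_k$ in $\nn$ such that $C_k\subseteq D_{p_k}$ for all $k$. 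Applying Lemma~\ref{l7} to the $C$-measurable M-family $\aaa$ and the disjoint sequence $\mathbf{C}$, I get $L_1\in\pap(\nn)$ with $\Delta_{\mathbf{C}}(M)\in\aaa$ for all $M\in\pap(L_1)$; applying Lemma~\ref{l7} a second time to $\bbb$, \emph{with the same sequence} $\mathbf{C}$ and with parameter $N=L_1$, I get $L_2\in\pap(L_1)$ with $\Delta_{\mathbf{C}}(M)\in\bbb$ for all $M\in\pap(L_2)$. Since $L_2\subseteq L_1$, every $M\in\pap(L_2)$ then satisfies $\Delta_{\mathbf{C}}(M)\in\aaa\cap\bbb$; and taking $M=L_2=\{l_0<l_1<\dots\}$, the $n$-th element of $\Delta_{\mathbf{C}}(L_2)$ lies in $C_{l_{2n}}\subseteq D_{p_{l_{2n}}}$, so, as $p_{l_{2n}}\to\infty$ and $(D_n)_n$ is decreasing, $\Delta_{\mathbf{C}}(L_2)$ is the set $C$ we want.

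To produce such a sequence $\mathbf{C}$ (or to finish in another way), I would first fix, for each $n$, some $G_n\in\aaa\cap\bbb$ with $G_n\subseteq D_n$, and then run a greedy process: put $p_0=0$, $C_0=G_0$; given disjoint $C_0,\dots,C_{k-1}$ with $C_i\subseteq D_{p_i}$ and $p_0<\dots<p_{k-1}$, look for some $p>p_{k-1}$ and some $H\in\aaa\cap\bbb$ with $H\subseteq D_p$ and $H\cap(C_0\cup\dots\cup C_{k-1})=\varnothing$, and if one exists set $p_k=p$, $C_k=H$. If the process runs forever, the engine above applies and we are done. If it stalls at a stage $k\geq1$, then, writing $W=C_0\cup\dots\cup C_{k-1}$, heredity forces $G_p\subseteq^* W$ for every $p>p_{k-1}$ (otherwise $G_p\setminus W$ would be a legal choice of $H$). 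Since $W$ is a union of $k$ sets and each $G_p$ is infinite, a pigeonhole argument produces a fixed $i^\ast<k$ and a strictly increasing sequence $q_0<q_1<\dots$ (all $>p_{k-1}$) with $G_{q_j}\cap C_{i^\ast}$ infinite for each $j$. Setting $K_j=C_{i^\ast}\cap D_{q_j}$, one checks that $K_j$ is infinite (it contains $C_{i^\ast}\cap G_{q_j}$), that $K_j\in\aaa\cap\bbb$ by heredity (it lies inside $C_{i^\ast}$), that $K_j\subseteq D_{q_j}$, and --- crucially --- that $(K_j)_j$ is \emph{decreasing}, since $(D_{q_j})_j$ is.

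The last task is then to handle a decreasing sequence $(K_j)_j$ that lies entirely inside $\aaa\cap\bbb$. If it is eventually constant modulo finite sets, say $K_j$ differs from $K_{j^\ast}$ by a finite set for all $j\geq j^\ast$, then $C=K_{j^\ast}$ works: $C\subseteq^* K_j\subseteq D_{q_j}$ for all $j$, hence $C\subseteq^* D_n$ for all $n$ because $q_j\to\infty$ and $(D_n)_n$ decreases. If not, I would choose $j_0<j_1<\dots$ with $K_{j_l}\setminus K_{j_{l+1}}$ infinite for every $l$; the sets $C_l'=K_{j_l}\setminus K_{j_{l+1}}$ are then pairwise disjoint, lie in $\aaa\cap\bbb$ by heredity, and satisfy $C_l'\subseteq K_{j_l}\subseteq D_{q_{j_l}}$ with $q_{j_l}\to\infty$, so the engine applies once more. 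I expect the only real obstacle to be this last piece of bookkeeping: turning the failure of the greedy step into a decreasing sequence confined to a \emph{single} member of $\aaa\cap\bbb$, which is what lets heredity keep all the subsequently constructed sets inside $\aaa\cap\bbb$; once that reduction is available, it is the simultaneous application of Lemma~\ref{l7} to the two $C$-measurable M-families along the \emph{same} disjoint sequence that supplies the essential content.
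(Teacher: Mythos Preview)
Your proof is correct and follows the same strategy as the paper: reduce via Fact~\ref{f3}(ii) to a decreasing sequence in $\mathrm{co}(\aaa\cap\bbb)$, manufacture a pairwise disjoint sequence in $\aaa\cap\bbb$ sitting inside a cofinal subsequence of the $D_n$'s, and then apply Lemma~\ref{l7} twice (once for $\aaa$, once for $\bbb$, nesting the parameters) to obtain the desired diagonal set. The only difference is one of detail: the paper dismisses the disjointification step with the phrase ``refining if necessary, we may assume that $C_n\cap C_m=\varnothing$'', whereas you carry out a careful greedy construction with an explicit analysis of the stalling case (pigeonholing into a single $C_{i^\ast}$, then splitting into the eventually-constant and non-eventually-constant subcases). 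Your extra bookkeeping is sound and is arguably what the paper's one-line ``refining'' is hiding, so the two arguments are essentially identical in substance.
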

\begin{proof}
Clearly we may assume that $X=\nn$. In order to show that
$\aaa\cap \bbb$ is an M-family we will use Fact \ref{f3}(ii).
So, let $(D_n)_n$ be a decreasing sequence in
$\mathrm{co}(\aaa\cap\bbb)$. As the family
$\aaa\cap\bbb$ is hereditary, there exists a sequence
$\mathbf{C}=(C_n)_n$ in $\aaa\cap\bbb$ with $C_n\subseteq D_n$ for
all $n\in\nn$. Refining if necessary, we may assume that $C_n\cap
C_m= \varnothing$ for all $n\neq m$. Applying Lemma \ref{l7}
successively two times, we find $L\in\pap(\nn)$ such that
$\Delta_{\mathbf{C}}(M)\in \aaa$ and
$\Delta_{\mathbf{C}}(M)\in\bbb$ for all $M\in\pap(L)$. Finally
observe that $\Delta_{\mathbf{C}}(M)\subseteq^* D_n$ for every
$n\in\nn$ and every $M\in\pap(L)$. The proof is completed.
\end{proof}
Let $A, B\in\pap(\nn)$ with $A=\{x_0<x_1<...\}$ and $B=\{y_0<
y_1<...\}$ their increasing enumerations. We define the
\textit{diagonal product} $A\otimes B$ of $A$ and $B$ by
\begin{equation}
\label{e10} A\otimes B=\{ (x_n,y_n): n\in\nn\} \in \pap(\nn\times\nn).
\end{equation}
If $\aaa, \bbb\subseteq \pap(\nn)$ are two hereditary families,
then we let
\begin{equation}
\label{e11} \aaa\otimes\bbb=\{ A\otimes B: A\in\aaa \text{ and } B\in\bbb\}.
\end{equation}
Notice that $\aaa\otimes\bbb$ is a hereditary subfamily of
$\pap(\nn\times\nn)$. We have the following.
\begin{prop}
\label{p9} Let $\aaa, \bbb\subseteq \pap(\nn)$ be two M-families.
If $\aaa$ and $\bbb$ are $C$-measurable, then $\aaa\otimes \bbb$
is an M-family.
\end{prop}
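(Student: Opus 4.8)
The plan is to reduce the statement to an application of Lemma \ref{l7}, exactly in the spirit of the proof of Proposition \ref{p8}, but with a new choice of disjointified sequence adapted to the diagonal product. We will use the characterization of M-families in Fact \ref{f3}(ii). So let $(D_n)_n$ be a decreasing sequence in $\mathrm{co}(\aaa\otimes\bbb)$; we must produce $W\in\aaa\otimes\bbb$ with $W\subseteq^* D_n$ for every $n\in\nn$.

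Since $\aaa\otimes\bbb$ is hereditary, for each $n$ we may pick $E_n\in\aaa\otimes\bbb$ with $E_n\subseteq D_n$, say $E_n=A_n\otimes B_n$ with $A_n\in\aaa$, $B_n\in\bbb$. Refining (passing to infinite subsets, which stays inside $\aaa$ and $\bbb$ by heredity, and keeping $E_n\subseteq D_n$ since $D_n$ is decreasing) we may assume that the $A_n$ are pairwise disjoint and the $B_n$ are pairwise disjoint. Now apply Lemma \ref{l7} to the M-family $\aaa$ with the disjoint sequence $\mathbf{A}=(A_n)_n$, getting $L_1\in\pap(\nn)$ with $\Delta_{\mathbf{A}}(M)\in\aaa$ for all $M\in\pap(L_1)$; then apply Lemma \ref{l7} again to $\bbb$ with $\mathbf{B}=(B_n)_n$ inside $L_1$, getting $L\in\pap(L_1)$ with $\Delta_{\mathbf{B}}(M)\in\bbb$ for all $M\in\pap(L)$. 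Thus there is a single infinite $L$ with $\Delta_{\mathbf{A}}(M)\in\aaa$ and $\Delta_{\mathbf{B}}(M)\in\bbb$ for every $M\in\pap(L)$.

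The heart of the matter, and the step I expect to require the most care, is the bookkeeping that ties $\Delta_{\mathbf{A}}(M)\otimes\Delta_{\mathbf{B}}(M)$ back to the original sets $E_n\subseteq D_n$. Recall $\Delta_{\mathbf{A}}(M)=\{x^{m_{2k}}_{m_{2k+1}}:k\in\nn\}$ where $\{x^n_0<x^n_1<\cdots\}$ enumerates $A_n$; similarly $\Delta_{\mathbf{B}}(M)=\{y^{m_{2k}}_{m_{2k+1}}:k\in\nn\}$ where $\{y^n_0<y^n_1<\cdots\}$ enumerates $B_n$. The issue is that the diagonal product of two sets is formed by matching their increasing enumerations position by position, and $\Delta_{\mathbf{A}}(M)$, $\Delta_{\mathbf{B}}(M)$ are already written in increasing order (each $x^{m_{2k}}_{m_{2k+1}}<x^{m_{2(k+1)}}_{m_{2(k+1)+1}}$, since the $A_n$ are disjoint and appear in increasing blocks — here one must be slightly careful and, if necessary, further refine $L$ so that $\max A_{n}<\min A_{n+1}$ for consecutive indices used, and likewise for $B$, so that the block structure forces the enumerations to be in the right order). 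Then the $k$-th element of $\Delta_{\mathbf{A}}(M)\otimes\Delta_{\mathbf{B}}(M)$ is exactly $(x^{m_{2k}}_{m_{2k+1}},y^{m_{2k}}_{m_{2k+1}})$, which is the $m_{2k+1}$-th element of $A_{m_{2k}}\otimes B_{m_{2k}}=E_{m_{2k}}$, hence lies in $E_{m_{2k}}\subseteq D_{m_{2k}}$.

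Finally, set $W=\Delta_{\mathbf{A}}(L)\otimes\Delta_{\mathbf{B}}(L)\in\aaa\otimes\bbb$. By the computation above, all but finitely many elements of $W$ lie in $D_n$: given $n$, every element of $W$ indexed by $k$ with $m_{2k}\geq n$ lies in $E_{m_{2k}}\subseteq D_{m_{2k}}\subseteq D_n$, using that $(D_n)_n$ is decreasing. Hence $W\subseteq^*D_n$ for all $n$, and by Fact \ref{f3}(ii) this shows $\aaa\otimes\bbb$ is an M-family. The only genuinely nontrivial point, as noted, is arranging the refinements so that the position-by-position matching defining $\otimes$ coincides with the indexing coming from $\Delta_{\mathbf{A}}$ and $\Delta_{\mathbf{B}}$; once the block structure is set up correctly this is a direct verification.
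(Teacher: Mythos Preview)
Your proof follows essentially the same route as the paper's: the same appeal to Fact \ref{f3}(ii), the same disjointification of the $A_n$ and $B_n$, the same double application of Lemma \ref{l7}, and the same identification of the $k$-th element of $\Delta_{\mathbf{A}}(M)\otimes\Delta_{\mathbf{B}}(M)$ as a point of $E_{m_{2k}}\subseteq D_{m_{2k}}$.

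The one place where your write-up is not quite right is the ordering step. Your suggested fix ``refine $L$ so that $\max A_n<\min A_{n+1}$'' is literally impossible, since each $A_n$ is infinite. What the paper does (and what you are clearly gesturing at) is, after obtaining $L$, to pass to a further infinite subset $I=\{i_0<i_1<\dots\}\in\pap(L)$ chosen recursively so that $x^{i_{2n}}_{i_{2n+1}}<x^{i_{2k}}_{i_{2k+1}}$ and $y^{i_{2n}}_{i_{2n+1}}<y^{i_{2k}}_{i_{2k+1}}$ for all $n<k$; this is always possible (at each stage pick $i_{2k+3}$ large enough) and forces the increasing enumerations of $\Delta_{\mathbf{A}}(I)$ and $\Delta_{\mathbf{B}}(I)$ to line up exactly as you want, giving $\Delta_{\mathbf{A}}(I)\otimes\Delta_{\mathbf{B}}(I)=\{(x^{i_{2n}}_{i_{2n+1}},y^{i_{2n}}_{i_{2n+1}}):n\in\nn\}$. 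With this correction your argument and the paper's are the same.
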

\begin{proof}
Let $(D_n)_n$ be a decreasing sequence in $\mathrm{co}(\aaa\otimes
\bbb)$. There exist sequences $\mathbf{A}=(A_n)_n$ and
$\mathbf{B}=(B_n)_n$ in $\aaa$ and $\bbb$ respectively such that
$A_n\otimes B_n\subseteq D_n$ for every $n\in\nn$. As the families
$\aaa$ and $\bbb$ are hereditary, we may assume that $A_n\cap
A_m=\varnothing$ and $B_n\cap B_m=\varnothing$ for all $n\neq m$.
For every $n\in\nn$ let $\{x^n_0<x^n_1<...\}$ and
$\{y^n_0<y^n_1<...\}$ be the increasing enumerations of the sets
$A_n$ and $B_n$ respectively. Applying Lemma \ref{l7} successively
two times, we find $L\in\pap(\nn)$ such that $\Delta_{\mathbf{A}}(M)\in\aaa$
and $\Delta_{\mathbf{B}}(M)\in\bbb$ for every $M\in \pap(L)$. We may
select $I=\{i_0<i_1<...\}\in \pap(L)$ such that
$x^{i_{2n}}_{i_{2n+1}}<x^{i_{2k}}_{i_{2k+1}}$ and
$y^{i_{2n}}_{i_{2n+1}}<y^{i_{2k}}_{i_{2k+1}}$ for all $n<k$. It
follows that
\[ \Delta_{\mathbf{A}}(I)\otimes \Delta_{\mathbf{B}}(I)=\big\{
(x^{i_{2n}}_{i_{2n+1}},y^{i_{2n}}_{i_{2n+1}}): n\in\nn\big\}.\]
Hence $\Delta_{\mathbf{A}}(I)\otimes \Delta_{\mathbf{B}}(I)
\subseteq^* D_n$ for every $n\in\nn$ and
$\Delta_{\mathbf{A}}(I)\otimes \Delta_{\mathbf{B}}(I)\in
\aaa\otimes\bbb$. By Fact \ref{f3}(ii), we see that
$\aaa\otimes \bbb$ is an M-family and the proof is completed.
\end{proof}
Proposition \ref{p9} has some topological implications
which we are about to describe. Let us recall, first,
some definitions. Let $(Y,\tau)$ be a (Hausdorff) topological
space. A point $y\in Y$ is said to have the
\textit{weak diagonal sequence property} if
for every doubly indexed sequence $(y^n_k)_{n,k}$ in $Y$ with
$\lim_{k} y^n_k=y$ for all $n\in\nn$, there exists $L\in\pap(\nn)$
and for every $n\in L$ a $k_n\in\nn$ such that $\lim_{n\in L}
y^n_{k_n}=y$. The space $(Y, \tau)$ has the weak diagonal sequence
property if every point $y\in Y$ has it. Using Fact \ref{f3}(iii),
it is easy to see that if $X$ is a countable set, $\tau$ is a
topology on $X$ and $x\in X$, then the point $x$ has the weak
diagonal sequence property in the space $(X,\tau)$ if and only
if the family $\ccc_x=\{ A\in\pap(X):A\stackrel{\tau}{\to} x\}$
is an M-family. The following corollary of Proposition \ref{p9}
yields a positive answer to Question 5.4 from \cite{TU}.
\begin{cor}
\label{c10} Let $X, Y$ be two countable sets and $\tau_1, \tau_2$
two analytic topologies on $X$ and $Y$ respectively. Assume that
both $(X,\tau_1)$ and $(Y,\tau_2)$ have the weak diagonal sequence
property. Then $(X\times Y, \tau_1\times\tau_2)$ has the weak
diagonal sequence property.
\end{cor}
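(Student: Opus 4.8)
The plan is to fix an arbitrary point $(x,y)\in X\times Y$ and show that the family $\ccc_{(x,y)}=\{S\in\pap(X\times Y):S\to(x,y)\text{ in }\tau_1\times\tau_2\}$ is an M-family; by the characterization of the weak diagonal sequence property recorded just before the statement (obtained via Fact \ref{f3}(iii)), this is all that is needed, since $(x,y)$ is arbitrary. First I would fix bijective enumerations of $X$ and $Y$ by $\nn$; these also enumerate $X\times Y$ and, through \eqref{e10}, let us regard $\ccc_x\otimes\ccc_y$ as a subfamily of $\pap(X\times Y)$. Since $\tau_1$ is analytic, the family $\ccc_x=\{A\in\pap(X):A\to x\text{ in }\tau_1\}$ is coanalytic --- indeed $A\notin\ccc_x$ iff there exists $U\in\tau_1$ with $x\in U$ and $A\setminus U$ infinite, a $\SB^1_1$ condition on $A$ --- and hence $C$-measurable, and similarly for $\ccc_y$; both are M-families by hypothesis. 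By Proposition \ref{p9} it follows that $\ccc_x\otimes\ccc_y$ is an M-family.

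The core of the proof is to exhibit a hereditary subfamily of $\ccc_{(x,y)}$ that is cofinal and is itself an M-family, and then to invoke Fact \ref{f4}(i). I would take $\ddd=\ddd_1\cup\ddd_2\cup\ddd_3$, where
\[\ddd_1=\{S\in\ccc_{(x,y)}:S\subseteq\{x\}\times Y\},\qquad \ddd_2=\{S\in\ccc_{(x,y)}:S\subseteq X\times\{y\}\},\]
and $\ddd_3=\{S\in\ccc_{(x,y)}:S=A\otimes B\text{ for some infinite }A\subseteq X\text{ and }B\subseteq Y\}$. Each of $\ddd_1,\ddd_2,\ddd_3$ is hereditary, and so is $\ccc_{(x,y)}$ itself, since a subset of a set converging to $(x,y)$ again converges to $(x,y)$ (and an infinite subset of $A\otimes B$ has again the form $A'\otimes B'$). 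For cofinality, given $S\in\ccc_{(x,y)}$ at least one of $S\cap(\{x\}\times Y)$, $S\cap(X\times\{y\})$, $S_0:=S\setminus\big((\{x\}\times Y)\cup(X\times\{y\})\big)$ is infinite; in the first two cases it belongs to $\ddd_1$, resp.\ $\ddd_2$. In the remaining case, the Hausdorff (hence $T_1$) property of $\tau_1$ and $\tau_2$ forces every row and every column of $S_0$ to be finite --- e.g.\ for $a\neq x$ the set $X\setminus\{a\}$ is an open neighbourhood of $x$, so $S$ meets the column over $a$ in a finite set, and symmetrically for rows --- and then a simple diagonal recursion extracts from $S_0$ an infinite set of the form $A\otimes B$, which is contained in $S$ and therefore lies in $\ddd_3$.

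It remains to see that $\ddd$ is an M-family. Since $A\otimes B$ converges to $(x,y)$ exactly when $A\to x$ in $\tau_1$ and $B\to y$ in $\tau_2$ (one implication by taking products of basic neighbourhoods, the other by projecting onto the two coordinates), we obtain $\ddd_3=\ccc_x\otimes\ccc_y$, which is an M-family by the first paragraph. The bijection $B\mapsto\{x\}\times B$ identifies $\ddd_1$ with $\ccc_y$ and takes diagonalizing sequences to diagonalizing sequences, so $\ddd_1$ is an M-family on $X\times Y$; symmetrically $\ddd_2$ is an M-family, being identified with $\ccc_x$. Two applications of Fact \ref{f4}(ii) then show that $\ddd$ is an M-family, and Fact \ref{f4}(i) gives that $\ccc_{(x,y)}$ is an M-family, which completes the argument.

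The step I expect to be the main obstacle is the cofinality of $\ddd$: one has to isolate and treat separately the ``degenerate'' convergent sets concentrated in a single row or column of $X\times Y$, and it is precisely here that the Hausdorff assumption enters. Once the ``spread-out'' part is known to have finite rows and columns, the extraction of a copy of some $A\otimes B$ inside it is routine, and the remaining steps are purely formal; one only has to keep the enumerations of $X$, $Y$ and $X\times Y$ fixed and mutually compatible so that $\otimes$ and Proposition \ref{p9} apply verbatim.
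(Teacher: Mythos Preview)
Your proposal is correct and follows essentially the same route as the paper: both reduce to showing $\ccc_{(x,y)}$ is an M-family by exhibiting the cofinal hereditary subfamily $\ddd_1\cup\ddd_2\cup(\ccc_x\otimes\ccc_y)$, appealing to Proposition~\ref{p9} for the third piece and to Fact~\ref{f4} to conclude. You supply more detail than the paper does---in particular the cofinality argument via the $T_1$ (Hausdorff) assumption and the extraction of an $A\otimes B$ from a set with finite rows and columns---which the paper leaves as a bare ``observe''.
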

\begin{proof}
Clearly we may assume that $X=Y=\nn$. Let $x,y\in\nn$ arbitrary.
As we have already remarked, it is enough to show that the family
\[ \ccc_{(x,y)}=\{ C\in \pap(\nn\times\nn):
C\stackrel{\tau_1\times \tau_2}{\longrightarrow} (x,y)\}\] is an
M-family. By our assumptions on $\tau_1$ and $\tau_2$ we see that
the families
\[ \ccc_x=\{ A\in\pap(\nn): A \stackrel{\tau_1}{\rightarrow} x\}
\ \text{ and } \ \ccc_y=\{ B\in \pap(\nn): B\stackrel{\tau_2}{\rightarrow} y\}\]
are both co-analytic M-families on $\nn$. It follows by
Proposition \ref{p9} that the family $\ccc_x\otimes \ccc_y$ is an
M-family. Notice that $\ccc_x\otimes \ccc_y \subseteq
\ccc_{(x,y)}$. We let
\[ \ccc^x_{(x,y)}=\big\{ C\in \ccc_{(x,y)}: C\subseteq \{x\}\times\nn\big\}
\ \text{ and } \ \ccc^y_{(x,y)}=\big\{ C\in \ccc_{(x,y)}: C\subseteq
\nn\times \{y\}\big\}.\] As $\ccc_y$ and $\ccc_x$ are M-families,
it is easy to see that so are $\ccc^x_{(x,y)}$ and
$\ccc^y_{(x,y)}$. It follows by Fact \ref{f4}(ii) that the family
\[\bbb= \ccc^x_{(x,y)} \cup \ccc^y_{(x,y)} \cup (\ccc_x\otimes \ccc_y)\]
is an M-family. Now observe that $\bbb$ is a hereditary
subfamily of $\ccc_{(x,y)}$ which is cofinal in $\ccc_{(x,y)}$.
Hence, by Fact \ref{f4}(i), we conclude that $\ccc_{(x,y)}$ is an
M-family and the proof is completed.
\end{proof}
We notice that, after a first draft of the paper, S. Todor\v{c}evi\'{c}
informed us that he was also aware of the fact that the weak
diagonal sequence property is productive within the
class of countable analytic spaces.

We proceed by presenting another application of Lemma \ref{l7}.
To this end, let us notice that, by Fact \ref{f1}, if $\aaa$
is a hereditary family, then $\aaa$ is cofinal in $\aaa^{\perp\perp}$.
Hence, by Fact \ref{f4}(i), we see that if $\aaa$ is an M-family,
then so is $\aaa^{\perp\perp}$. We have the following strengthening
of Fact \ref{f3}(iii) for the family $\aaa^{\perp\perp}$, provided that
$\aaa$ is reasonably definable.
\begin{prop}
\label{p11} Let $X$ be a countable set and $\aaa\subseteq
\pap(X)$ be an M-family and $C$-measurable. Then, for every
sequence $(A_n)_n$ in $\aaa^{\perp\perp}$ there exists $A\in
\aaa^{\perp\perp}$ such that $A\cap A_n$ is infinite for
infinitely many $n\in\nn$.
\end{prop}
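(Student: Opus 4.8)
The plan is to reduce, via a dichotomy, to the case where $(A_n)_n$ dominates a \emph{pairwise disjoint} sequence $(C_n)_n$ lying in $\aaa$ itself, and then to manufacture the desired $A$ as a suitable ``diagonal'' union governed by Lemma \ref{l7}. The point of the reduction is that $\aaa^{\perp\perp}$ need not be $C$-measurable, so Lemma \ref{l7} cannot be applied to it directly; the disjoint sequence in $\aaa$ is what makes the lemma available.

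First I would note that $\aaa^{\perp\perp}$ is hereditary (immediate from Fact \ref{f1}), so every infinite subset of any $A_n$ again lies in $\aaa^{\perp\perp}$ and hence, by Fact \ref{f1}, contains an infinite member of $\aaa$. Using this, run a recursion on $n=0,1,2,\dots$: either find an infinite $C_n\in\aaa$ with $C_n\subseteq A_n$ disjoint from the previously chosen $C_m$'s (call $n$ ``good''), or, if $A_n\setminus\bigcup_{m<n,\ m\ \mathrm{good}}C_m$ is finite, call $n$ ``bad''. If only finitely many indices are good, set $U=\bigcup_{m\ \mathrm{good}}C_m$ (a finite union of members of $\aaa$); then for every $n$ one has $A_n\cap C_m$ infinite for some good $m$ (take $m=n$ if $n$ is good, and use $A_n\subseteq^* U$ if $n$ is bad), so by pigeonhole some fixed $C_{m^*}$ meets infinitely many $A_n$ in an infinite set, and $A:=C_{m^*}\in\aaa\subseteq\aaa^{\perp\perp}$ finishes the proof. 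Hence, relabelling the good indices, I may assume $(C_n)_n$ is a pairwise disjoint sequence in $\aaa$ with $C_n\subseteq A_n$ for all $n$, and it suffices to produce $A\in\aaa^{\perp\perp}$ meeting infinitely many $C_n$ in an infinite set.

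Now apply Lemma \ref{l7} to $\mathbf{C}=(C_n)_n$ (legitimate, since $\aaa$ is a $C$-measurable M-family) with $N=\nn$ to obtain $L\in\pap(\nn)$ with $\Delta_{\mathbf{C}}(M)\in\aaa$ for every $M\in\pap(L)$. Writing $C_n=\{x^n_0<x^n_1<\cdots\}$, put $D_n=\{x^n_j:j\in L,\ j>n\}$ for $n\in L$ and let $A=\bigcup_{n\in L}D_n$, a disjoint union with each $D_n$ an infinite subset of $C_n\subseteq A_n$; since $L$ is infinite, $A$ meets infinitely many $A_n$ in an infinite set. To check $A\in\aaa^{\perp\perp}$ via Fact \ref{f1}, let $E\in\pap(A)$. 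If $E$ meets only finitely many $D_n$, then some $E\cap D_n$ is an infinite subset of $C_n\in\aaa$ and we are done. Otherwise $S=\{n\in L:E\cap D_n\neq\varnothing\}$ is infinite; choose $p_n=x^n_{j_n}\in E\cap D_n$ for $n\in S$ (so $j_n\in L$ and $j_n>n$) and recursively pick $t_0<t_1<\cdots$ in $S$ with $t_{k+1}>j_{t_k}$. Then $M=\{t_k:k\in\nn\}\cup\{j_{t_k}:k\in\nn\}$ has increasing enumeration $t_0,j_{t_0},t_1,j_{t_1},\dots$ (because $t_k<j_{t_k}<t_{k+1}$) and lies in $\pap(L)$, so $\Delta_{\mathbf{C}}(M)\in\aaa$; but $\Delta_{\mathbf{C}}(M)=\{x^{t_k}_{j_{t_k}}:k\in\nn\}=\{p_{t_k}:k\in\nn\}$, an infinite subset of $E$ in $\aaa$. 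Thus every infinite subset of $A$ contains an infinite member of $\aaa$, i.e. $A\in\aaa^{\perp\perp}$, completing the proof.

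I expect the delicate point to be the bookkeeping that makes the last step go through: the sets $D_n$ must be placed high enough inside $C_n$ (only positions $j\in L$ with $j>n$) so that an arbitrary partial transversal $\{p_n:n\in S\}$ of the $D_n$'s can be thinned and rearranged into an honest value $\Delta_{\mathbf{C}}(M)$ of the operator with $M\in\pap(L)$ — this is precisely what dictates the ``$j>n$'' restriction and the recursive choice of the $t_k$. The other step requiring care is the opening dichotomy, since without it one cannot guarantee a pairwise disjoint sequence in $\aaa$ sitting below $(A_n)_n$, and Lemma \ref{l7} has nothing to act on.
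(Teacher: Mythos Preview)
Your proof is correct and follows essentially the same route as the paper: pass to a pairwise disjoint sequence $(C_n)_n$ in $\aaa$ below $(A_n)_n$, apply Lemma~\ref{l7} to obtain $L$, set $A=\bigcup_{n\in L}\{x^n_j:j\in L,\ j>n\}$, and verify $A\in\aaa^{\perp\perp}$ by showing that any infinite $E\subseteq A$ either meets some $C_n$ infinitely or contains some $\Delta_{\mathbf{C}}(M)$ with $M\in\pap(L)$. Your explicit construction of $M$ from the transversal $\{p_n\}$ is exactly the content of the paper's phrase ``it is easy to see.''

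The one point worth noting is that your opening dichotomy, while correct, is not needed: a pairwise disjoint sequence $(C_n)_n$ in $\aaa$ with $C_n\subseteq A_n$ \emph{always} exists. Indeed, one can first choose pairwise disjoint infinite sets $B_n\subseteq A_n$ (possible for any sequence of infinite sets, by a straightforward diagonal enumeration), and then, since $B_n\in\aaa^{\perp\perp}$ by heredity, pick $C_n\in\aaa$ with $C_n\subseteq B_n$ via Fact~\ref{f1}. This is what the paper does in one line. So your ``finitely many good indices'' case never actually arises, though your pigeonhole argument there is sound.
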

\begin{proof}
Clearly we may assume that $X=\nn$. Let $(A_n)_n$ be a sequence in
$\aaa^{\perp\perp}$. By Fact \ref{f1}, we may select a sequence
$\mathbf{C}=(C_n)_n$ in $\aaa$ such that $C_n\subseteq A_n$ for
every $n\in\nn$ and $C_n\cap C_m=\varnothing$ for all $n\neq m$.
By Lemma \ref{l7}, there exists $L\in\pap(\nn)$ such that
$\Delta_{\mathbf{C}}(M)\in\aaa$ for every $M\in \pap(L)$. For
every $n\in\nn$ let $\{x_0^n<x_1^n<...\}$ be the increasing
enumeration of the set $C_n$. We set
\[ A=\bigcup_{n\in L} \{ x_i^n: i\in L \text{ and } i>n\}. \]
We claim that $A$ is the desired set. First we notice that $A\cap
C_n$ is infinite for every $n\in L$, and so, $A\cap A_n$ is
infinite for infinitely many $n\in\nn$. What remains is to show
that $A\in\aaa^{\perp\perp}$. To this end, let $B\in\pap(A)$
arbitrary. It is easy to see that either there exists $n\in L$
such that $B\cap C_n$ is infinite, or there exists $M\in \pap(L)$
such that $\Delta_{\mathbf{C}}(M)\subseteq B$. As $\aaa$ is
hereditary and $\Delta_{\mathbf{C}}(M)\in \aaa$ for every $M\in
\pap(L)$, we see that $B$ contains an element of $\aaa$. Hence, by
Fact \ref{f1}, we conclude that $A\in\aaa^{\perp\perp}$ and the
result follows.
\end{proof}
The following corollary is simply a restatement of Proposition
\ref{p11} in the topological setting.
\begin{cor}
\label{c12} Let $X$ be a countable set and $\tau$ an analytic
topology on $X$. Assume that $(X, \tau)$ is Fr\'{e}chet and has
the weak diagonal sequence property. Let $x\in X$ and set
$\mathcal{C}_x=\{ A\in\pap(X): A\stackrel{\tau}{\to}x\}$. Then for
every sequence $(A_n)_n$ is $\mathcal{C}_x$ there exists
$A\in\mathcal{C}_x$ such that $A\cap A_n$ is infinite for
infinitely many $n\in\nn$.
\end{cor}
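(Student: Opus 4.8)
The plan is to read off the statement from Proposition \ref{p11}, applied with $\aaa=\ccc_x$. What must be verified is that $\ccc_x$ is a hereditary, $C$-measurable M-family (so that Proposition \ref{p11} is applicable to it) and that $\ccc_x^{\perp\perp}=\ccc_x$ (so that the conclusion of Proposition \ref{p11}, which speaks of $\ccc_x^{\perp\perp}$, turns into the assertion we want about $\ccc_x$). As usual we may assume $X=\nn$.

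First I would record the structural properties of $\ccc_x$. It is hereditary --- indeed an ideal, since both a subsequence of, and the union of two sequences, converging to $x$ still converge to $x$. Since $(X,\tau)$ has the weak diagonal sequence property, in particular the point $x$ does, and by the observation made just before Corollary \ref{c10} (a point of a countable space has the weak diagonal sequence property if and only if its convergence family is an M-family) the family $\ccc_x$ is an M-family. Since $\tau$ is analytic, $\ccc_x=\{A\in\pap(X):\forall\,U\in\tau\ (x\in U\Rightarrow A\subseteq^* U)\}$ is co-analytic --- exactly as noted in the proof of Corollary \ref{c10} --- hence $C$-measurable, because the $C$-measurable sets form a $\sigma$-algebra and therefore contain all co-analytic sets. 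Finally, since $(X,\tau)$ is Fr\'echet, $\ccc_x^{\perp\perp}=\ccc_x$: the inclusion $\ccc_x\subseteq\ccc_x^{\perp\perp}$ is automatic since $\ccc_x$ is hereditary, while if $B\in\ccc_x^{\perp\perp}\setminus\ccc_x$ then some $\tau$-open $U\ni x$ satisfies $B\setminus U$ infinite, so by Fact \ref{f1} applied to the infinite subset $B\setminus U$ of $B$ there is an infinite $A\subseteq B\setminus U$ with $A\in\ccc_x$, contradicting $A\stackrel{\tau}{\to}x$ since $A\cap U=\varnothing$.

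With these facts in hand, Proposition \ref{p11} applied to $\aaa=\ccc_x$ gives: for every sequence $(A_n)_n$ in $\ccc_x^{\perp\perp}$ there exists $A\in\ccc_x^{\perp\perp}$ with $A\cap A_n$ infinite for infinitely many $n$; replacing $\ccc_x^{\perp\perp}$ by $\ccc_x$ throughout yields exactly the statement to be proved. I do not expect a genuine obstacle here; the single point that calls for a moment of care is the verification that an analytic topology renders $\ccc_x$ a $C$-measurable set, so that Proposition \ref{p11} --- stated for $C$-measurable M-families --- truly applies, and this is handled just as in the proof of Corollary \ref{c10}.
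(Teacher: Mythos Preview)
Your proposal is correct and follows exactly the same approach as the paper: verify that $\ccc_x$ is a co-analytic (hence $C$-measurable) M-family with $\ccc_x^{\perp\perp}=\ccc_x$, and then apply Proposition~\ref{p11}. You supply more detail than the paper does (which simply asserts these facts and cites the proof of Corollary~\ref{c10} and the Fr\'echet hypothesis), but the strategy is identical; incidentally, your argument for $\ccc_x^{\perp\perp}\subseteq\ccc_x$ via Fact~\ref{f1} is valid as written and does not actually invoke the Fr\'echet property, even though you preface it with ``since $(X,\tau)$ is Fr\'echet''.
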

\begin{proof}
As we have already seen in Corollary \ref{c10}, the family
$\mathcal{C}_x$ is a co-analytic M-family. Moreover, the
assumption that $(X,\tau)$ is a Fr\'{e}chet space simply reduces
to the fact that $\mathcal{C}_x^{\perp\perp}=\mathcal{C}_x$. So
the result follows by Proposition \ref{p11}.
\end{proof}
We close this section with the following result concerning the
effect of the notion of an M-family in the context of separation
of families.
\begin{prop}
\label{p13} Let $X$ be a countable set and $\aaa, \bbb \subseteq
\pap(X)$ be two hereditary families. Assume that $\bbb$ is an
M-family. Then the following are equivalent.
\begin{enumerate}
\item[(i)] $\aaa$ and $\bbb$ are countably separated.
\item[(ii)] $\aaa$ is countably generated in $\bbb^\perp$.
\end{enumerate}
\end{prop}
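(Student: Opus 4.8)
The plan is to prove (ii)$\Rightarrow$(i) directly and then (i)$\Rightarrow$(ii) using the M-family hypothesis on $\bbb$. The first implication is routine: if $(B_n)_n$ is a sequence in $\bbb^\perp$ witnessing that $\aaa$ is countably generated in $\bbb^\perp$, then for each $A\in\aaa$ pick $n$ with $A\subseteq^* B_n$. Setting $C_n=B_n$ and absorbing the finite discrepancies (e.g. by passing to the countable family $\{B_n\cup F: n\in\nn,\ F\subseteq X \text{ finite}\}$, which is still contained in $\bbb^\perp$ since adding a finite set does not destroy orthogonality), one obtains, for every $A\in\aaa$ and every $B\in\bbb$, an index $n$ with $A\subseteq C_n$ and $C_n\perp B$ (the latter because $C_n$ is a finite modification of some $B_m\in\bbb^\perp$). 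Hence $\aaa$ and $\bbb$ are countably separated.

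For the substantive implication (i)$\Rightarrow$(ii), suppose $(C_n)_n\subseteq\pap(X)$ witnesses that $\aaa$ and $\bbb$ are countably separated: for every $A\in\aaa$ and $B\in\bbb$ there is $n$ with $A\subseteq C_n$ and $C_n\perp B$. I want to produce a sequence $(B_n)_n$ in $\bbb^\perp$ so that each $A\in\aaa$ satisfies $A\subseteq^* B_n$ for some $n$. The natural candidate is $B_n=C_n$, but $C_n$ need not lie in $\bbb^\perp$: there is no guarantee that a single $C_n$ is orthogonal to \emph{all} of $\bbb$, only that it works for the particular $B$'s attached to the $A$'s inside it. This is exactly where the M-family hypothesis enters. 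Fix $n$; consider the family $\bbb_n=\{B\in\bbb: C_n\not\perp B\}$ of members of $\bbb$ that $C_n$ fails to separate from things. I claim $C_n$ ``almost avoids'' $\bbb$ after removing a countable diagonalization, using that $\bbb$ is an M-family: if $C_n$ met infinitely many pairwise-compatible-with-$C_n$ members of $\bbb$ in a bad way, one could use Definition~\ref{d1} (or Fact~\ref{f3}(iii)) to build a single $B\in\bbb$ with $B\cap C_n$ infinite, i.e. $C_n\in\mathrm{co}(\bbb)$, and then carve out of $C_n$ an infinite set still in $\bbb$ to derive a contradiction with the separation property — because that carved set $A'$ would be an element of $\aaa$? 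No: here is the gap I must bridge.

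The honest approach is as follows. Since $(C_n)_n$ separates, for each $A\in\aaa$ the set $\{n: A\subseteq C_n\}$ is nonempty; replacing $C_n$ by $\bigcap_{i\le n}C_i'$ for a suitable re-indexing we may assume $(C_n)_n$ is decreasing (here one uses that $\aaa$ is hereditary so only countably many ``types'' of inclusion matter — more carefully, one takes all finite intersections, still a countable separating family, and then extracts a decreasing cofinal subsequence indexing by which ones can contain a fixed $A$; I would check this bookkeeping in detail). Now I use that $\bbb$ is an M-family via Fact~\ref{f3}(ii): for the \emph{decreasing} sequence $(C_n)_n$, either some tail $C_N\in\bbb^\perp$ (and then we are essentially done, taking $B_k=C_k$ for $k\ge N$ and handling the finitely many $A$'s sitting only in $C_0,\dots,C_{N-1}$ separately by their defining $B$'s), or $C_n\in\mathrm{co}(\bbb)$ for all $n$, whence by Fact~\ref{f3}(ii) there is $B\in\bbb$ with $B\subseteq^* C_n$ for all $n$. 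In the latter case, pick $A\in\aaa$ with $A\subseteq C_n$ and $C_n\perp B$ for some $n$ — wait, this requires such an $A$ to exist at all. If $\aaa=\varnothing$ the conclusion (ii) is trivial, so assume $A_0\in\aaa$; then $A_0\subseteq C_{n_0}$ and $C_{n_0}\perp B$ for the appropriate $B$, but $B\subseteq^* C_{n_0}$ forces $B$ finite modulo $C_{n_0}$, contradicting $C_{n_0}\perp B$ with $B$ infinite. Thus the second case cannot occur, and we land in the first case, which gives (ii).

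The main obstacle I anticipate is the reduction to a \emph{single} decreasing separating sequence: a priori the separating family is just a countable set $(C_n)_n$ with no monotonicity, and different $A\in\aaa$ may be separated by incomparable $C_n$'s, so one cannot literally intersect them all. The fix is to enumerate finite intersections $C_F=\bigcap_{i\in F}C_i$ ($F\in\ibt$), note this countable family still separates $\aaa$ from $\bbb$ (each $A$ lies in $C_{\{n\}}$ for some $n$, hence in $C_F$ for $F=\{n\}$, and $C_F\perp B$ whenever some $C_i$, $i\in F$, is $\perp B$; for the separation one needs that the single $n$ given by separation makes $C_{\{n\}}=C_n$ work, which it does), and then argue on each ``branch'' $C_{\{0\}}\supseteq C_{\{0,1\}}\supseteq\cdots$ — actually it is cleaner to observe that it suffices to find, for each fixed $n$, some $B_n\in\bbb^\perp$ with $C_n\subseteq^* B_n$, because then $(B_n)_n$ generates $\aaa$ in $\bbb^\perp$; and to find such $B_n$ one applies Fact~\ref{f3}(ii) to the single \emph{decreasing} sequence which is constantly $C_n$, concluding $C_n\in\bbb^\perp$ directly unless $C_n\in\mathrm{co}(\bbb)$, the latter being refuted exactly as above using any $A\in\aaa$ with $A\subseteq C_n$ (and if no such $A$ exists, $C_n$ is irrelevant and can be dropped). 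I would write this last, streamlined version: for each $n$, if there is $A\in\aaa$ with $A\subseteq C_n$ then $C_n\perp B$ for the $B$ separating this $A$ — but we need $C_n\perp B$ for \emph{all} $B\in\bbb$, so suppose not, i.e. $C_n\not\perp B^*$ for some $B^*\in\bbb$, so $C_n\in\mathrm{co}(\bbb)$; then since $\aaa\ni A\subseteq C_n$ and $\bbb$ is an M-family, apply Fact~\ref{f3}(ii) to $(C_n,C_n,\dots)$ to get $B\in\bbb$ with $B\subseteq^* C_n$, contradicting that separation gives $m$ with $A\subseteq C_m$, $C_m\perp B$ while also $A\subseteq C_n\subseteq$ (after the intersection reduction, comparable to) $C_m$ forcing $C_n\perp B$ hence $B$ finite. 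So in fact every $C_n$ that contains a member of $\aaa$ lies in $\bbb^\perp$, and these finitely-generate $\aaa$ in $\bbb^\perp$, completing the proof.
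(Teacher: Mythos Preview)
Your proof of (i)$\Rightarrow$(ii) does not go through. The final ``streamlined'' claim --- that every $C_n$ which contains some $A\in\aaa$ must lie in $\bbb^\perp$ --- is simply false: take $X=\nn$, $\aaa=\pap(2\nn)$, $\bbb=\pap(2\nn+1)$, and the separating family $C_0=2\nn$, $C_1=\nn$. Then $C_1$ contains every $A\in\aaa$ yet $C_1\notin\bbb^\perp$. Your argument for this claim invokes an ``intersection reduction'' making the $C_n$'s comparable, but that reduction is exactly the step you yourself flagged as problematic and never carried out: replacing $(C_n)_n$ by $(\bigcap_{i\le n}C_i)_n$ destroys the property that each $A\in\aaa$ is contained in some term. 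Moreover, the only place you actually apply the M-family hypothesis is to the \emph{constant} sequence $(C_n,C_n,\dots)$, which is a trivial use (it follows from heredity of $\bbb$ alone). So the M-family assumption is never genuinely exploited.

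The missing idea, which the paper supplies, is to localize at a fixed $A_0\in\aaa$: look at $L=\{n:A_0\subseteq C_n\}$ and form the genuinely decreasing sequence $D_k=\bigcap_{i\le k}C_{l_i}$ of finite intersections \emph{along $L$}. These $D_k$ all contain $A_0$, so the sequence is tailored to $A_0$ and remains a separating-type object for the pair $(A_0,\bbb)$. One first checks $L$ is infinite (if $L$ were finite and $C_L\notin\bbb^\perp$, pick $B\in\bbb$ with $B\subseteq C_L$ and observe $A_0$ and $B$ cannot be separated). Then, assuming each $D_k\notin\bbb^\perp$, the M-family property (Fact~\ref{f3}(ii)) applied to the decreasing $(D_k)_k$ yields $B_0\in\bbb$ with $B_0\subseteq^* C_n$ for all $n\in L$, and now $A_0$ and $B_0$ cannot be separated by any $C_n$, a contradiction. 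Hence some finite intersection $C_F$ with $A_0\subseteq C_F$ lies in $\bbb^\perp$, and the countable family $\{C_F:F\text{ finite}, C_F\in\bbb^\perp\}$ generates $\aaa$ in $\bbb^\perp$.
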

\begin{proof}
It is clear that (ii) implies (i). So we only have to show
the other implication. Let us fix a sequence $(C_n)_n$
in $\pap(X)$ which separates $\aaa$ from $\bbb$. For every
$F\subseteq \nn$ finite we set $C_F=\bigcap_{n\in F} C_n$.
\begin{claim}
\label{cl1} For every $A\in\aaa$ there exists $F\subseteq \nn$
finite such that $A\subseteq C_F$ and $C_F\in \bbb^\perp$.
\end{claim}
\noindent \textit{Proof of the claim.} Assume not. Thus,
there exists $A_0\in\aaa$ such that for every $F\subseteq \nn$
finite either $A_0\nsubseteq C_F$ or $C_F\notin \bbb^\perp$.
Let
\[ L=\{ n\in\nn: A_0\subseteq C_n\}. \]
We claim that $L$ is infinite. Assume not. Then $A_0\subseteq C_L$
and so, by our assumptions, we get that $C_L\notin \bbb^\perp$.
Hence, there exists $B_L\in\bbb$ with $B_L\subseteq C_L$. It
follows that for every $n\in\nn$ either $A_0\nsubseteq C_n$ (i.e.
$n\notin L$) or $B_L\subseteq C_L\subseteq C_n$. This means that
$A_0$ and $B_L$ cannot be separated by the sequence $(C_n)_n$, a
contradiction.

Now let $L=\{l_0<l_1<...\}$ be the increasing enumeration of $L$.
For every $k\in\nn$ let $D_k=C_{l_0}\cap...\cap C_{l_k}$. Clearly
$(D_k)_k$ is a decreasing sequence. By our assumptions we see that
$D_k\notin \bbb^\perp$, and so, $D_k\in \mathrm{co}(\bbb)$ for all
$k\in\nn$. As $\bbb$ is an M-family, invoking Fact \ref{f3}(ii)
we see that there exists $B_0\in\bbb$ such that $B_0\subseteq^* D_k$
for every $k\in\nn$. It follows that $B_0\subseteq^* C_n$ for all
$n\in L$. But then, for every $n\in\nn$ we have that either
$A_0\nsubseteq C_n$ or $B_0\subseteq^* C_n$. That it, the sets
$A_0$ and $B_0$ cannot be separated by the sequence $(C_n)_n$,
a contradiction again. The claim is proved.  \hfill $\lozenge$
\bigskip

\noindent By the above claim, for every $A\in\aaa$ there exists
$F_A\subseteq \nn$ finite with $C_{F_A}\in \bbb^\perp$ and
$A\subseteq C_{F_A}$. The family $\{ C_{F_A}:A\in\aaa\}$ is
clearly countable, and so, $\aaa$ is countably generated in
$\bbb^\perp$. The proof is completed.
\end{proof}


\section{Proof of Theorem I}

This section is devoted to the proof of Theorem I stated in the
introduction. So, let $\aaa,\bbb\subseteq \pap(\nn)$ be a pair
of hereditary orthogonal families such that $\aaa$ is $\SB^1_1$
and $\bbb$ is $C$-measurable and an M-family. Assume that (i)
does not hold, i.e. $\aaa$ is not countably generated in
$\bbb^\perp$. We will find a perfect Lusin gap inside
$(\aaa,\bbb)$.

By Theorem \ref{t4}, there exists a one-to-one map
$\phi:\Sigma\to\nn$ such that, setting
\[ \eee=\{\phi^{-1}(A):A\in\aaa\} \ \text{ and } \
\hhh=\{\phi^{-1}(B): B\in\bbb\}, \]
properties (i) and (ii) of Theorem \ref{t4} are satisfied for
$\eee$ and $\hhh$. In what follows, we will work inside the tree
$\Sigma$ and with the families $\eee$ and $\hhh$. Denote by $\ccc$
the family of all infinite chains of $\Sigma$. That is
\[ \ccc=\big\{ C\in \pap(\Sigma): \exists \sg\in [\Sigma] \text{ with }
C\subseteq \{\sg|n: n\in\nn\} \big\}.\]
Clearly $\ccc$ is a $\PB^0_2$ hereditary family. We notice the
following properties of the families $\eee$ and $\hhh$.
\begin{enumerate}
\item[(P1)] $\eee$ and $\hhh$ are hereditary and orthogonal.
\item[(P2)] $\eee$ is analytic and $\ccc\subseteq \eee$.
\item[(P3)] $\hhh$ is $C$-measurable and an M-family.
\item[(P4)] For every $t\in\Sigma$, $\big\{t\cup \{n\}:n\in\nn \text{ and }
t<\{n\}\big\}\in\hhh$.
\end{enumerate}
Properties (P1)-(P4) are rather straightforward consequences of the
the way the families $\eee$ and $\hhh$ are defined and  of the fact
that the map $\phi$ is one-to-one.

We are going to define a class of subsets of $\Sigma$ which will play
a decisive role in the proof of Theorem I.
\begin{defn}
\label{d14} Let $\sg\in[\Sigma]$ and
$D\in\pap(\Sigma)$. We say that $D$ descends to $\sg$, in
symbols $D\downarrow\sg$, if for every $k\in\nn$ the set $D$ is
almost included in the set $\{t\in\Sigma: \sg|k\sqsubseteq t\}$.
We call such a set $D$ a descender.
\end{defn}
We also need to introduce some notations. Let $\mathbf{B}$
be the set of all block sequences of $\Sigma$. For every
$\mathbf{b}=(b_n)_n\in\mathbf{B}$ we set
\begin{equation}
\label{e12} \Sigma_\mathbf{b}=\{ t\in \Sigma: \exists b\in \langle
\mathbf{b}\rangle \text{ with } t\sqsubseteq b\} \ \text{ and } \
\sg_{\mathbf{b}}=\bigcup_n b_n
\end{equation}
where the set $\langle \mathbf{b}\rangle$ was defined in \S 2.2.
Clearly $\Sigma_\mathbf{b}$ is a downwards closed subtree
of $\Sigma$. Notice that $\sg_{\mathbf{b}}$ is just the
leftmost branch of the tree $\Sigma_{\mathbf{b}}$. We also
observe the following.
\begin{enumerate}
\item[(O1)] The set $[\Sigma_\mathbf{b}]$ of all branches of
$\Sigma_\mathbf{b}$ is in one-to-one correspondence with the
subsequences of $\mathbf{b}=(b_n)_n$. In particular, for every
$\sg\in [\Sigma_\mathbf{b}]$ there exists a unique subsequence
$(b_{l_n})_n$ of $(b_n)_n$, which we shall denote by
$\mathbf{b}_\sg$, such that $\sg=\bigcup_n b_{l_n}$. Moreover, the map
$[\Sigma_{\mathbf{b}}]\ni\sg \mapsto \mathbf{b}_\sg\in[\mathbf{b}]$
is continuous.
\item[(O2)] If $\mathbf{c}\in [\mathbf{b}]$, then
$\Sigma_\mathbf{c}$ is a downwards closed subtree of $\Sigma_\mathbf{b}$.
\end{enumerate}
We define $\Delta:\mathbf{B}\to \pap(\Sigma)$ by
\begin{equation}
\label{e13} \Delta\big( (b_n)_n\big)=
\Big\{ b_0\cup \{\min b_2\}, ..., \bigcup_{i=0}^{3n} b_i \cup
\{\min b_{3n+2}\}, ...\Big\}.
\end{equation}
We notice the following.
\begin{enumerate}
\item[(O3)] The map $\Delta$ is continuous.
\item[(O4)] For every block sequence $\mathbf{b}=(b_n)_n$ the set
$\Delta(\mathbf{b})$ is a subset of the tree $\Sigma_{\mathbf{b}}$,
is a descender and descends to the leftmost branch
$\sg_{\mathbf{b}}=\bigcup_n b_n$
of $\Sigma_{\mathbf{b}}$.
Moreover, the sets $\{\sg_{\mathbf{b}}|n: n\in\nn\}$ and
$\Delta(\mathbf{b})$ are disjoint.
\end{enumerate}
The following lemma is a consequence of Theorem \ref{t2}
and of the fact that $\hhh$ is an M-family. It can be considered
as a parameterized version of Lemma \ref{l7}. We notice that the
arguments in its proof follow similar lines as in
\cite[Lemma 44]{ADK}.
\begin{lem}
\label{l15} There exists $\mathbf{b}\in\mathbf{B}$ such that
$\Delta(\mathbf{c})\in \hhh$ for all $\mathbf{c}\in [\mathbf{b}]$.
\end{lem}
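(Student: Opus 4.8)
The plan is to apply the Milliken-type dichotomy (Theorem~\ref{t2}) to the set
\[ \mathcal{X}=\{\mathbf{c}\in\mathbf{B}: \Delta(\mathbf{c})\in\hhh\}. \]
Since $\Delta$ is continuous by (O3) and $\hhh$ is $C$-measurable by (P3), the set $\mathcal{X}=\Delta^{-1}(\hhh)$ is $C$-measurable; hence Theorem~\ref{t2} yields $\mathbf{b}\in\mathbf{B}$ with either $[\mathbf{b}]\subseteq\mathcal{X}$ or $[\mathbf{b}]\cap\mathcal{X}=\varnothing$. In the first case $\mathbf{b}$ is as required. So it suffices to rule out the second alternative, and to do this I would derive a contradiction from the assumption that $\Delta(\mathbf{c})\notin\hhh$ for all $\mathbf{c}\in[\mathbf{b}]$ by exhibiting a single $\mathbf{c}_0\in[\mathbf{b}]$ with $\Delta(\mathbf{c}_0)\in\hhh$. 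This is the exact parameterized analogue of the proof of Lemma~\ref{l7}, with Ellentuck's theorem replaced by Theorem~\ref{t2}.

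To build $\mathbf{c}_0$, write $\mathbf{b}=(b_n)_n$ and, for $k\in\nn$, put $v_k=b_0\cup\cdots\cup b_k\in\Sigma$ and
\[ S_k=\{ v_k\cup\{\min b_l\}: l\geq k+2\}. \]
Since $\max v_k=\max b_k<\min b_l$ for every $l\geq k+2$, each $S_k$ is an infinite subset of the set of immediate successors of $v_k$ in $\Sigma$, which belongs to $\hhh$ by property (P4); hence $S_k\in\hhh$ by heredity. Now invoke the M-family property of $\hhh$ in the form of Fact~\ref{f3}(iii) applied to the sequence $(S_k)_k$: there is $A\in\hhh$ such that the set $K=\{k: A\cap S_k\neq\varnothing\}$ is infinite, and for each $k\in K$ I fix $l_k\geq k+2$ with $v_k\cup\{\min b_{l_k}\}\in A$. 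Next thin $K$ recursively to a sequence $k_0<k_1<\cdots$ by setting $k_0=\min K$ and $k_{n+1}=\min\{k\in K: k>l_{k_n}\}$, which is possible since $K$ is infinite; note that $k_n+2\leq l_{k_n}<k_{n+1}$ for all $n$.

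Finally, let $\mathbf{c}_0=(c_m)_m$ be obtained by grouping consecutive blocks of $\mathbf{b}$ as follows: $c_0=b_0\cup\cdots\cup b_{k_0}$, and for $n\geq 0$, $c_{3n+1}=b_{k_n+1}\cup\cdots\cup b_{l_{k_n}-1}$, $c_{3n+2}=b_{l_{k_n}}\cup\cdots\cup b_{k_{n+1}-1}$ and $c_{3n+3}=b_{k_{n+1}}$. The inequalities $k_n+2\leq l_{k_n}<k_{n+1}$ guarantee that every $c_m$ is a nonempty union of consecutive blocks of $\mathbf{b}$, so $\mathbf{c}_0\in[\mathbf{b}]$. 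A short induction gives $c_0\cup\cdots\cup c_{3n}=v_{k_n}$ and $\min c_{3n+2}=\min b_{l_{k_n}}$, so by the definition \eqref{e13} of $\Delta$ the $n$-th element of $\Delta(\mathbf{c}_0)$ is exactly $v_{k_n}\cup\{\min b_{l_{k_n}}\}\in A$; thus $\Delta(\mathbf{c}_0)\subseteq A\in\hhh$, and $\Delta(\mathbf{c}_0)\in\hhh$ by heredity, contradicting $[\mathbf{b}]\cap\mathcal{X}=\varnothing$. The only genuinely delicate point is the bookkeeping in this last step: the sets $S_k$ must be chosen sparse inside the successor set of $v_k$ (taking $l\geq k+2$ rather than $l\geq k+1$) and $K$ must be thinned using the $l_k$'s, precisely so that the successors selected by Fact~\ref{f3}(iii) can be realized as the values of $\Delta$ along an honest sub-block sequence of $\mathbf{b}$; everything else is routine.
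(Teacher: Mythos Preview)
Your proof is correct and follows essentially the same approach as the paper's: both apply Theorem~\ref{t2} to $\mathcal{X}=\Delta^{-1}(\hhh)$, then rule out the empty alternative by using (P4) on the successor sets of the partial unions $v_k=\bigcup_{i\leq k}b_i$, applying Fact~\ref{f3}(iii), and regrouping $\mathbf{b}$ into a sub-block sequence whose $\Delta$-image lies in the resulting $A\in\hhh$. The only difference is cosmetic bookkeeping in the regrouping (you absorb the leftover blocks into $c_{3n+2}$ rather than $c_{3n}$), which does not affect the argument.
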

\begin{proof}
We let
\[ \mathcal{X}=\{\mathbf{c}\in\mathbf{B}: \Delta(\mathbf{c})\in\hhh\}.\]
Then $\mathcal{X}$ is a $C$-measurable subset of $[\mathbf{B}]$.
By Theorem \ref{t2}, there exists $\mathbf{b}=(b_n)_n\in\mathbf{B}$
such that $[\mathbf{b}]$ is monochromatic. We claim that
$[\mathbf{b}]\subseteq \mathcal{X}$. To this end, we argue as follows.
For every $n\in\nn$ we set $t_n=\bigcup_{k\leq n} b_k\in\Sigma$ and
\[ A_n=\big\{ t_n\cup \{\min b_i\}: i>n+1\big\}\in \pap(\Sigma). \]
The set $A_n$ is a subset of the set $\big\{ t_n\cup \{m\}: m\in\nn
\text{ and } t_n<\{m\}\big\}$ which, by property (P4) above, belongs
to $\hhh$. As the family $\hhh$ is hereditary, we see that $A_n\in\hhh$
for all $n\in\nn$. Invoking the fact that $\hhh$ is an M-family
and Fact \ref{f3}(iii), we find $A\in\hhh$ such that
$A\cap A_n\neq\varnothing$ for infinitely many $n\in\nn$.
We may select $L=\{l_0<l_1<...\}, M=\{i_0<i_1<...\}\in \pap(\nn)$
with $l_n+1<i_n<l_{n+1}$ and such that $t_{l_n}\cup \{\min b_{i_n}\}\in
A\cap A_{l_n}$ for all $n\in\nn$. We set $s_n=t_{l_n}\cup
\{\min b_{i_n}\}$ for all $n\in\nn$. It follows that
$\{s_n:n\in\nn\}\in\hhh$, as $\{s_n:n\in\nn\}\subseteq A\in \hhh$
and $\hhh$ is hereditary.

Now we define $\mathbf{c}=(c_n)_n\in[\mathbf{b}]$ as follows. We set
$c_0= \bigcup_{k\leq l_0} b_n$ (i.e. $c_0=t_{l_0})$, $c_1=b_{l_0+1}\cup ... \cup
b_{i_0-1}$ and $c_2=b_{i_0}$. For every $n\geq 1$ we let $I_n=[i_{n-1}+1,
l_n]$ and $J_n=[l_{n}+1, i_n-1]$ and we set
\[ c_{3n}=\bigcup_{k\in I_n} b_k \ , \ c_{3n+1}=
\bigcup_{k\in J_n} b_k \ \text{ and } \ c_{3n+2}=b_{i_n}. \]
Clearly $\mathbf{c}\in [\mathbf{b}]$ and it is easy to see
that $\Delta(\mathbf{c})=\{s_n:n\in\nn\}$. Thus, $\Delta(\mathbf{c})\in\hhh$.
It follows that $[\mathbf{b}]\cap \mathcal{X}\neq\varnothing$.
Hence $[\mathbf{b}]\subseteq \mathcal{X}$ and the lemma is proved.
\end{proof}
Let $\mathbf{b}=(b_n)_n$ be the block sequence obtained by Lemma
\ref{l15}. We set
\begin{equation}
\label{e14} \fff=\big\{ A\in\pap(\Sigma):\exists (b_{l_n})_n
\text{ subsequence of } (b_n)_n \text{ with }
A\subseteq \Delta\big((b_{l_n})_n\big)\big\}.
\end{equation}
By property (P1), the family $\hhh$ is hereditary. Hence,
using the continuity of the map $\Delta$ and the fact
that $\Delta(\mathbf{c})\in\hhh$ for every $\mathbf{c}\in[\mathbf{b}]$
we see that
\begin{enumerate}
\item[(P5)] $\fff$ is a hereditary analytic subfamily of $\hhh$.
\end{enumerate}
Consider now the tree $\Sigma_{\mathbf{b}}$ corresponding
to $\mathbf{b}$ as it was defined in (\ref{e12}) above and let
$\sg\in[\Sigma_{\mathbf{b}}]$ arbitrary. By (O1), there exists
a subsequence $\mathbf{b}_\sg=(b_{l_n})_n$ of $(b_n)_n$ such that
$\sg=\bigcup_n b_{l_n}$. By (O4) and (O2), we get that
$\Delta\big((b_{l_n})_n\big)\subseteq \Sigma_{\mathbf{b}_\sg}
\subseteq \Sigma_{\mathbf{b}}$. Moreover, the set
$\Delta\big((b_{l_n})_n\big)$ descends to $\sg$ and, by definition,
belongs to the family $\fff$. Hence, summarizing, we arrive to the
the following property of $\fff$.
\begin{enumerate}
\item[(P6)] For every $\sg\in [\sssb]$ there exists $D\in\fff$
with $D\subseteq \sssb$ and $D\downarrow \sg$.
\end{enumerate}
We have the following lemma, which is essentially a consequence of
property (P6).
\begin{lem}
\label{l16} The families $\ccc$ and $\fff$ are not countably separated.
\end{lem}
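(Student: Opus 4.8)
The plan is to argue by contradiction, using a Baire category argument in the body $[\sssb]$ of the tree $\sssb$. Suppose $(C_m)_m$ is a sequence in $\pap(\Sigma)$ witnessing that $\ccc$ and $\fff$ are countably separated, so that for every $C\in\ccc$ and every $D\in\fff$ there is an $m\in\nn$ with $C\subseteq C_m$ and $C_m\perp D$. For each branch $\sg\in[\sssb]$ the chain $C^\sg:=\{\sg|k:k\in\nn\}$ belongs to $\ccc$, and I would put $B_m=\{\sg\in[\sssb]: C^\sg\subseteq C_m\}$ for each $m$. Since $B_m=\bigcap_k\{\sg\in[\sssb]:\sg|k\in C_m\}$ is a countable intersection of clopen sets, it is closed, so its boundary $\partial B_m$ is nowhere dense in $[\sssb]$; as $[\sssb]$ is the body of a subtree of $\bt$, it is a Polish space and hence Baire, so I can choose a branch $\sg\in[\sssb]\setminus\bigcup_m\partial B_m$. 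For this $\sg$, the index set $N(\sg):=\{m\in\nn: C^\sg\subseteq C_m\}$ coincides with $\{m\in\nn:\sg\in\mathrm{int}(B_m)\}$, because $\sg$ lies in no boundary $\partial B_m$ and each $B_m$ is closed.

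Next I would extract a cone inside each $C_m$ with $m\in N(\sg)$. Fixing such an $m$, the relation $\sg\in\mathrm{int}(B_m)$ gives a node $u_m\sqsubseteq\sg$ of $\sssb$ with $\{\rho\in[\sssb]:u_m\sqsubseteq\rho\}\subseteq B_m$; since $\sssb$ is pruned (every node extends to a branch), this upgrades to $\{t\in\sssb:u_m\sqsubseteq t\}\subseteq C_m$. Now apply property (P6) to $\sg$ to obtain $D\in\fff$ with $D\subseteq\sssb$ and $D\downarrow\sg$. The descender condition means $D$ is almost included in $\{t\in\Sigma:\sg|k\sqsubseteq t\}$ for every $k$; taking $k=|u_m|$ (so that $\sg|k=u_m$) and using $D\subseteq\sssb$, we get that $D$ is almost included in $\{t\in\sssb:u_m\sqsubseteq t\}\subseteq C_m$, and since $D$ is infinite this forces $D\cap C_m$ to be infinite, i.e. $C_m\not\perp D$. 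As this holds for every $m\in N(\sg)$, the separation property applied to the pair $(C^\sg,D)$ --- which requires some $m\in N(\sg)$ with $C_m\perp D$ --- is violated, and we reach a contradiction.

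The step I expect to be the crux is the Baire category argument producing $\sg\notin\bigcup_m\partial B_m$: the naive approach of locating a single $C_m$ that swallows a cone does \emph{not} contradict separation, since the separating index is allowed to depend on the element $D\in\fff$. Working with a branch that avoids all the boundaries $\partial B_m$ is exactly what forces \emph{every} index $m$ with $C^\sg\subseteq C_m$ to carry a full cone over an initial segment of $\sg$, so that a single descender to $\sg$ inside $\sssb$ defeats all the candidate separators at once. Everything else --- that $B_m$ is closed, that $[\sssb]$ is Polish and $\sssb$ pruned, and that membership in $\mathrm{int}(B_m)$ yields a cone contained in $C_m$ --- is routine.
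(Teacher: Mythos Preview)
Your proof is correct and follows essentially the same approach as the paper: define the closed sets $B_m=\{\sg\in[\sssb]:C^\sg\subseteq C_m\}$, find a branch $\sg$ that for every $m$ lies either in the interior of $B_m$ or outside $B_m$, and then use a single descender $D\downarrow\sg$ from (P6) to defeat all candidate separators at once. The only cosmetic difference is that you invoke the Baire category theorem to avoid $\bigcup_m\partial B_m$, whereas the paper carries out the equivalent recursive construction by hand (choosing successive nodes $s_k$ that ``decide'' whether $V_{s_k}\subseteq B_k$ or $V_{s_k}\cap B_k=\varnothing$).
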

\begin{proof}
Assume, towards a contradiction, that there exists a sequence $(C_k)_k$
in $\pap(\Sigma)$ such that for every $C\in\ccc$ and every $B\in\fff$
there exists $k\in\nn$ with $C\subseteq C_k$ and $C_k\perp B$. For
every $k$ let
\[ F_k=\big\{ \sg\in [\sssb]: \{\sg|n:n\in\nn\}\subseteq C_k\big\}.\]
Then each $F_k$ is a closed subset of $[\sssb]$. Moreover
$[\sssb]=\bigcup_k F_k$.

For every $t\in\sssb$ and every $k\in\nn$ there exists
$s\in\sssb$ with $t\sqsubset s$ and such that
either $V_s\cap F_k=\varnothing$ or $V_s\subseteq F_k$, where
as usual by $V_s$ we denote the clopen subset $\{\sg\in[\sssb]:
s\sqsubset \sg\}$ of $[\sssb]$. Let us say that such
a node $s$ \textit{decides} for $(t,k)$. Observe that if $s$
decides for $(t,k)$ with $V_s\subseteq F_k$, then the set
$\{w\in\sssb: s\sqsubseteq w\}$ is a subset of $C_k$.

Recursively, we select a sequence $(s_k)_k$ in $\sssb$
such that $s_0$ decides for $(\varnothing,0)$ and
$s_{k+1}$ decides for $(s_k,k+1)$ for all $k\in\nn$. Notice
that $s_k\sqsubset s_{k+1}$. Thus, setting $\tau=\bigcup_k s_k$,
we see that $\tau\in[\sssb]$. By property (P6) above,
there exists $B_0\in \fff$ with $B_0\subseteq\sssb$ and
$B_0\downarrow \tau$. Now let $m\in\nn$ with
$\{\tau|n:n\in\nn\}\subseteq C_m$. Then $\tau\in F_m$.
As $s_m\sqsubset\tau$, we see that $V_{s_m}\cap F_m\neq\varnothing$.
The node $s_m$ decides for every $m\in\nn$, and so,
$V_{s_m}\subseteq F_m$. As we have already remarked,
this implies that $\{w\in\sssb: s_m\sqsubseteq w\}\subseteq C_m$.
As $B_0$ descends to $\tau$, $B_0\subseteq \sssb$ and
$s_m\sqsubset \tau$ we get
\[ B_0\subseteq^* \{ w\in \sssb: s_m\sqsubseteq w\}\subseteq C_m.\]
Summarizing, we see that for all $m\in\nn$ either $\{\tau|n:n\in\nn\}
\nsubseteq C_m$ or $B_0\subseteq^* C_m$. That is, the sequence
$(C_k)_k$ cannot separate the sets $\{\tau|n:n\in\nn\}$ and
$B_0$ although $\{\tau|n:n\in\nn\}\in\ccc$ and $B_0\in\fff$,
a contradiction. The lemma is proved.
\end{proof}
The families $\ccc$ and $\fff$ are hereditary, analytic and orthogonal.
Thus, applying Theorem \ref{t3} to the pair $(\ccc,\fff)$
and by Lemma \ref{l16}, we get that there exists a perfect Lusin gap
inside $(\ccc,\fff)$. As $\ccc\subseteq\eee$ and $\fff\subseteq\hhh$,
we see that there exists a perfect Lusin gap $2^\nn\ni x\mapsto
(A_x, B_x)$ inside $(\eee,\hhh)$. Now recall that the map $\phi:
\Sigma\to\nn$ obtained by Theorem \ref{t4} is one-to-one. It follows
that the map $2^\nn\ni x\mapsto \big(\phi(A_x),\phi(B_x)\big)$
is a perfect Lusin gap inside $(\aaa,\bbb)$. The proof of
Theorem I is completed.
\begin{rem}
We would like to point out that one can construct the perfect
Lusin gap inside $(\eee,\hhh)$ without invoking Theorem \ref{t3}.
This can be done as follows. Let $\mathbf{b}=(b_n)_n$ be the block
sequence obtained by Lemma \ref{l15}. First we construct,
recursively, a family $(t_s)_{s\in\ct}$ in $\Sigma_\mathbf{b}$
such that the following are satisfied.
\begin{enumerate}
\item[(C1)] For every $s, s'\in\ct$ we have $s\sqsubset s'$
if and only if $t_s\sqsubset t_{s'}$.
\item[(C2)] For every $s\in\ct$ and every $\sg\in [\Sigma_\mathbf{b}]$
with $t_{s^{\con}0}\sqsubset \sg$ we have $t_{s^{\con}1}\in
\Delta(\mathbf{b}_\sg)$, where, as in (O1) above, by $\mathbf{b}_\sg$
we denote the unique subsequence $(b_{l_n})_n$ of $(b_n)_n$ such
that $\sg=\bigcup_n b_{l_n}$.
\end{enumerate}
The construction proceeds as follows. We set $t_{\varnothing}=\varnothing$.
Assume that $t_s$ has been defined for some $s\in\ct$. We select
$\tau\in \Sigma_{\mathbf{b}}$ with $t_s\sqsubset \tau$. Let
$\mathbf{b}_\tau=(b_{l_n})_n$ be the unique subsequence of $\mathbf{b}$
with $\tau=\bigcup_n b_{l_n}$. By (O4) in the proof of Theorem I,
the set $\Delta(\mathbf{b}_\tau)$ descends to $\tau$. As $t_s\sqsubset\tau$,
there exists $t_{s^{\con}1}\in \Delta(\mathbf{b}_\tau)$ with $t_s\sqsubset
t_{s^{\con}1}$. The map $[\Sigma_\mathbf{b}]\ni \sg\mapsto \Delta(\mathbf{b}_{\sg})
\in \pap(\Sigma)$ is continuous. So, we may find a node $t_{s^{\con}0}$
incomparable to $t_{s^{\con}1}$ with $t_s \sqsubset t_{s^{\con}0}\sqsubset\tau$
and such that (C2) above is satisfied.

Having completed the construction, for every $x\in 2^\nn$ let
$\sg_x= \bigcup_n t_{x|n}\in [\Sigma_{\mathbf{b}}]$ and define
\[ A_x=\{ \sg_x|n: n\in\nn\}\in\eee \ \text{ and } \
B_x=\Delta(\mathbf{b}_{\sg_x})\in\hhh.\]
The perfect Lusin gap inside $(\eee,\hhh)$ is the map
$2^\nn\ni x\mapsto (A_x, B_x)$. It is easy to check that
it is one-to-one, continuous and $A_x\cap B_x=\varnothing$
for all $x\in 2^\nn$. Finally let $x,y\in 2^\nn$ with $x\neq y$.
We may assume that $x<y$, where $<$ stands for the lexicographical
ordering of $2^\nn$. There exists $s\in\ct$ with $s^{\con}0\sqsubset x$
and $s^{\con}1\sqsubset y$. Then $t_{s^{\con}1}\in A_y$. Moreover,
we have $t_{s^{\con}0}\sqsubset \sg_x$. By (C2) above, we see that
$t_{s^{\con}1}\in \Delta(\mathbf{b}_{\sg_x})$. Thus
$A_y\cap B_x\neq\varnothing$.
\end{rem}
\begin{rem}
Let $\aaa, \bbb\subseteq \pap(\nn)$ be two hereditary,
orthogonal, analytic families and assume that $\bbb$ is an
M-family. We notice that, in this case, the dichotomy in Theorem I
can be derived directly by Theorem \ref{t3}. To see this,
observe that if $\aaa$ is not countably generated in $\bbb^\perp$,
then, by Proposition \ref{p13}, the families $\aaa$ and $\bbb$ are
not countably separated. Thus, part (ii) of Theorem \ref{t3}
yields the existence of the gap inside $(\aaa, \bbb)$.
\end{rem}
\begin{rem}
As in Example \ref{e3}, let $E$ be a Polish space and
$\mathbf{f}=\{f_n\}_n$ be a pointwise bounded sequence of real-valued
Baire-1 functions on $E$ such that the closure $\kkk$ of $\{f_n\}_n$
in $\rr^E$ is a Rosenthal compact. We set
\begin{equation}
\label{e15} \mathcal{L}_{\mathbf{f}}=\big\{ L\in\pap(\nn):
(f_n)_{n\in L} \text{ is pointwise convergent}\big\}.
\end{equation}
For every $f\in\kkk$ let also $\llf$ be as in (\ref{e6}).
In \cite[Lemmas G.9 and G.10]{To4}, S. Todor\v{c}evi\'{c}
proved that if $f$ is any point of $\kkk$, then either
\begin{enumerate}
\item[(A1)] $f$ is a $G_\delta$ point of $\kkk$, or
\item[(A2)] there exists a perfect Lusin gap in $(\mathcal{L}_{\mathbf{f}}
\setminus\llf, \llf)$.
\end{enumerate}
Let us see how Theorem I yields the above dichotomy. So, fix
a point $f\in \kkk$. First we notice that, as it was explained
in \cite[Remark 1(2)]{Do}, by Debs' theorem \cite{De} there exists
a hereditary, Borel and \textit{cofinal} subfamily $\fff$ of
$\mathcal{L}_{\mathbf{f}}$. We set $\aaa=\fff\setminus \llf$.
Then $\aaa$ is an analytic, hereditary and cofinal subfamily
of $\mathcal{L}_{\mathbf{f}}\setminus \llf$. Moreover, as we
mentioned in Example \ref{ex3}, the family $\llf$ is a
co-analytic M-family. Noticing that $\aaa$ and $\llf$
are orthogonal, by Theorem I we get that either
\begin{enumerate}
\item[(A3)] $\aaa$ is countably generated in $\llf^\perp$, or
\item[(A4)] there exists a perfect Lusin gap in $(\aaa, \llf)$.
\end{enumerate}
Clearly, we only have to check that (A3) implies (A1). Indeed, let
$(L_k)_k$ be a sequence in $\llf^\perp$ that generates $\aaa$.
Set $V_k=\kkk\setminus \overline{\{f_n\}}^p_{n\in L_k}$ and notice
that $f\in V_k$ for every $k\in\nn$. Taking into account that
$\aaa$ is cofinal in $\mathcal{L}_{\mathbf{f}}\setminus \llf$
and using the Bourgain-Fremlin-Talagrand theorem \cite{BFT}, we
see that $\{f\}=\bigcap_k V_k$; that is the point $f$ is $G_\delta$.
\end{rem}


\section{Proof of Theorem II}

This section is devoted to the proof of Theorem II. Let
$\aaa, \bbb\subseteq \pap(\nn)$ be a pair of hereditary orthogonal
families such that $\aaa$ is analytic and $\bbb$ is $C$-measurable
and an M-family. Assume that $\aaa$ is not countably generated
in $\bbb^\perp$. By Theorem \ref{t4}, there exists a one-to-one map
$\phi:\Sigma\to\nn$ such that, setting $\eee=\{\phi^{-1}(A):A\in\aaa\}$
and $\hhh=\{\phi^{-1}(B):B\in\bbb\}$, the following properties are
satisfied for $\eee$ and $\hhh$.
\begin{enumerate}
\item[(P1)] $\eee$ and $\hhh$ are hereditary and orthogonal.
\item[(P2)] $\eee$ is analytic and $\ccc\subseteq \eee$.
\item[(P3)] $\hhh$ is $C$-measurable and an M-family.
\item[(P4)] For every $t\in\Sigma$, $\big\{t\cup \{n\}:n\in\nn \text{ and }
t<\{n\}\big\}\in\hhh$.
\end{enumerate}
As in the proof of Theorem I, we shall work inside the tree $\Sigma$
and with the families $\eee$ and $\hhh$.

We introduce the following class of subsets of $\Sigma$. It will be
used in a similar manner as the class of descenders was used in
the proof of Theorem I.
\begin{defn}
\label{d17} An infinite subset $F$ of $\Sigma$ will
be called a \textit{fan} if $F$ can be enumerated as
$\{t_n:n\in\nn\}$ and there exist $s\in\Sigma$ and a strictly
increasing sequence $(m_n)_n$ in $\nn$ with $s<\{m_0\}$ and such
that $s\cup \{m_n\}\sqsubseteq t_n$ for all $n\in\nn$.
\end{defn}
The following fact is essentially well-known. We sketch a proof
for completeness.
\begin{fact}
\label{f5} Let $A\in\pap(\Sigma)$. Then either $A$ is dominated,
or $A$ contains a fan. In particular, if $T$ is a downwards
closed, well-founded, infinite subtree of $\Sigma$, then every
infinite subset $A$ of $T$ contains a fan.
\end{fact}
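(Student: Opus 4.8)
The statement to prove is Fact~\ref{f5}: any $A\in\pap(\Sigma)$ is either dominated or contains a fan, and consequently every infinite subset of a well-founded downwards closed infinite subtree $T$ of $\Sigma$ contains a fan. The approach is a direct combinatorial dichotomy on the ``initial segments'' of the elements of $A$. Recall that each $t\in\Sigma$ is viewed as a strictly increasing finite sequence, hence a finite subset of $\nn$, and $A\subseteq\Sigma$ is an infinite family of such finite sequences. The plan is: either there is a fixed $\sg\in\nn^\nn$ that pointwise dominates all $t\in A$ (so $A\in\iii_\mathrm{d}$), or no such $\sg$ exists, and I want to extract a fan from the ``non-domination'' witnesses.

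\textbf{First step: organize $A$ as a tree.} Let $S=\{s\in\Sigma: s\sqsubseteq t \text{ for some } t\in A\}$ be the downwards closure of $A$; this is a downwards closed, infinitely-populated (possibly finitely branching, possibly not) subtree of $\Sigma$ whose branches through which some $t\in A$ passes. Since $A$ is infinite, $S$ is infinite, so by K\"onig's lemma either $S$ has an infinite branch or $S$ has a node of infinite splitting --- but I want to be careful: an infinite finitely-branching tree has an infinite branch, but here I want to produce a fan, which lives at a single splitting node. So the right case division is: either every node of $S$ has finite splitting \emph{within $S$} (equivalently $S$ is finitely branching), in which case I will argue $A$ is dominated; or some node $s\in S$ has infinitely many immediate successors in $S$, in which case I will build a fan at $s$.

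\textbf{Second step: the finitely branching case gives domination.} Suppose $S$ is finitely branching. I claim $A$ is dominated. Define $\sg\in\nn^\nn$ by $\sg(i)=1+\max\{ s(i): s\in S,\ |s|>i\}$; this maximum is over a finite set by an inductive argument on $i$: the nodes of $S$ of length $i+1$ form a finite set because $S$ is finitely branching and $S\cap\Sigma^{\le i}$ was already finite (base case: the unique length-$0$ node). Hence $\sg$ is well-defined, and for every $t\in A\subseteq S$ and every $i<|t|$ we have $t(i)<\sg(i)$, so $t$ is dominated by $\sg$; thus $A\in\iii_\mathrm{d}$, i.e. $A$ is dominated.

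\textbf{Third step: an infinitely-splitting node gives a fan.} Suppose some $s\in S$ has infinitely many immediate successors in $S$. Then there is a strictly increasing sequence $(m_n)_n$ in $\nn$ with $s<\{m_0\}$ and $s\cup\{m_n\}\in S$ for all $n$ (the condition $s<\{m_0\}$, i.e.\ $\max s<\min\{m_n\}$, is automatic since each $s\cup\{m_n\}\in\Sigma$ is strictly increasing, so $m_n>\max s$). By definition of $S$, for each $n$ pick $t_n\in A$ with $s\cup\{m_n\}\sqsubseteq t_n$. Passing to a subsequence if necessary to ensure the $m_n$ are strictly increasing and the $t_n$ are distinct, the set $F=\{t_n:n\in\nn\}$ is an infinite subset of $A$ which is a fan in the sense of Definition~\ref{d17}. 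Finally, for the ``in particular'' clause: if $T$ is downwards closed, well-founded and infinite, and $A\in\pap(T)$, then $A$ cannot be dominated --- for if $\sg\in\nn^\nn$ dominated $A$, then, $T$ being downwards closed, all of $\{t\in T: t\sqsubseteq a,\ a\in A\}$ would be dominated, and an infinite downwards closed subtree all of whose nodes are pointwise bounded by a fixed $\sg$ is finitely branching, hence (K\"onig) has an infinite branch, contradicting well-foundedness. So by the dichotomy just proved, $A$ contains a fan.

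\textbf{Main obstacle.} There is no deep obstacle; the only point requiring care is the bookkeeping in the finitely branching case --- verifying by induction on level that $S$ meets each $\Sigma^{\le i}$ in a finite set so that the defining maximum for $\sg(i)$ is over a finite nonempty set --- and, in the ``in particular'' clause, making the implication ``dominated $+$ downwards closed $+$ infinite $\Rightarrow$ ill-founded'' precise via K\"onig's lemma. Both are routine, which is why the paper says the fact is ``essentially well-known.''
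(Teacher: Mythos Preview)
Your proof is correct and follows essentially the same approach as the paper: form the downwards closure $\hat A$ (your $S$) of $A$, split according to whether $\hat A$ is finitely splitting, and in the ``in particular'' clause use K\"onig's lemma on $\hat A\subseteq T$ to rule out the dominated case. Your write-up simply fills in the details the paper leaves to the reader; the only superfluous hedge is the ``passing to a subsequence'' in Step~3, since the $t_n$ are automatically pairwise distinct (the nodes $s\cup\{m_n\}$ are pairwise incomparable immediate successors of $s$, so no single $t$ can extend two of them).
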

\begin{proof}
Fix $A\in\pap(\Sigma)$ and let $\hat{A}=\{t\in \Sigma: \exists s\in A
\text{ with } t\sqsubseteq s\}$ be the downwards closure of $A$.
It is easy to see that if $\hat{A}$ is finitely splitting,
then $A$ must be dominated while if $\hat{A}$ is not finitely
splitting, then $A$ must contain a fan.

For the second part, let $T$ be a downwards closed, well-founded,
infinite subtree of $\Sigma$ and fix $A\in \pap(T)$.
If $\hat{A}$ is finitely splitting, then by an application of
K\"{o}nig's lemma we see that $[T]\neq\varnothing$, a contradiction.
Thus $\hat{A}$ is not finitely splitting, and so, $A$ contains a fan.
\end{proof}
Notice that if $\mathbf{b}=(b_n)_n$ is a block sequence of
$\Sigma$ and $s\in\Sigma$ with $s<b_0$, then the set $\{s\cup
b_n:n\in\nn\}$ is a fan. A fan $F$ of this form will be called a
\textit{block} fan. By $\fff_{\mathrm{Block}}$ we denote the set
of all block fans of $\Sigma$. We have the following elementary
fact.
\begin{fact}
\label{f6} Every fan contains a block fan.
\end{fact}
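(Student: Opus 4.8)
The plan is to strip from each element of the fan its common initial segment $s$, and then thin out the resulting ``tails'' so that they line up into a block sequence; the divergence of the indices $m_n$ is exactly what makes this thinning possible.

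First I would fix the data witnessing that a given fan $F$ is a fan: an enumeration $F=\{t_n:n\in\nn\}$, a node $s\in\Sigma$, and a strictly increasing sequence $(m_n)_n$ in $\nn$ with $s<\{m_0\}$ and $s\cup\{m_n\}\sqsubseteq t_n$ for every $n$. For each $n$ I let $c_n\in\Sigma$ be the tail of $t_n$ obtained by deleting its first $|s|$ entries, so that $t_n=s\cup c_n$, $c_n\neq\varnothing$, $\min c_n=m_n$, and $s<c_n$ (the last because $\max s<m_0\le m_n=\min c_n$).

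Next I would pass to a subsequence. Since $(m_n)_n$ is strictly increasing it tends to infinity, so one can choose recursively $n_0<n_1<\cdots$ with $n_0=0$ and $m_{n_{k+1}}>\max c_{n_k}$ for all $k$ (each $\max c_{n_k}$ is a natural number, so this choice is legitimate). Putting $b_k=c_{n_k}$, one gets $\max b_k=\max c_{n_k}<m_{n_{k+1}}=\min c_{n_{k+1}}=\min b_{k+1}$, i.e.\ $b_k<b_{k+1}$, so $\mathbf{b}=(b_k)_k$ is a block sequence of $\Sigma$; moreover $s<b_0$ since $\max s<m_{n_0}=\min b_0$. Hence $\{s\cup b_k:k\in\nn\}=\{t_{n_k}:k\in\nn\}$ is a block fan, and it is contained in $F$, which proves the fact.

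I do not expect a genuine obstacle here: the argument is just an unwinding of the definitions of \S 2.2 together with the observation that the minimum of each tail $c_n$ equals $m_n$, and these minima diverge. The only point requiring a little care is the bookkeeping --- keeping track that after removing $s$ the first coordinate of $c_n$ is $m_n$, so that the $(b_k)_k$ really do satisfy $b_k<b_{k+1}$ and $s<b_0$ --- but once this is noted the verification is immediate.
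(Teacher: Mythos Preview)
Your argument is correct and is exactly the natural one the paper has in mind; the paper itself states Fact~\ref{f6} without proof, treating it as an immediate consequence of the definitions, and your unwinding (strip the common stem $s$, then thin the tails using $m_n\to\infty$ to force the block condition) is precisely that consequence.
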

We define $\Phi:\mathbf{B}\to \pap(\Sigma)$ by
\begin{equation}
\label{e16} \Phi\big((b_n)_n\big)= \big\{ b_0\cup b_1\cup \{\min b_2\}, ...,
b_0\cup b_{2n+1}\cup \{\min b_{2n+2}\}, ...\big\}.
\end{equation}
We observe the following.
\begin{enumerate}
\item[(O1)] The map $\Phi$ is continuous.
\item[(O2)] For every $\mathbf{b}\in\mathbf{B}$ the set
$\Phi(\mathbf{b})$ is a block fan.
\end{enumerate}
We have the following analogue of Lemma \ref{l15}.
\begin{lem}
\label{l18} There exists $\mathbf{b}\in\mathbf{B}$ such that
$\Phi(\mathbf{c})\in\hhh$ for all $\mathbf{c}\in [\mathbf{b}]$.
\end{lem}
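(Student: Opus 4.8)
The plan is to follow closely the proof of Lemma \ref{l15}, the only genuinely new point being that a block fan carries a single \emph{fixed} stem, whereas the descenders produced by $\Delta$ in Lemma \ref{l15} live over an increasing sequence of nodes. Accordingly, I set
\[ \mathcal{X}=\{\mathbf{c}\in\mathbf{B}:\Phi(\mathbf{c})\in\hhh\}. \]
Since $\hhh$ is $C$-measurable by (P3) and $\Phi$ is continuous by (O1), $\mathcal{X}$ is a $C$-measurable subset of $\mathbf{B}$, so Theorem \ref{t2} yields $\mathbf{b}=(b_n)_n\in\mathbf{B}$ with $[\mathbf{b}]$ monochromatic. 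It then suffices to produce a single $\mathbf{c}\in[\mathbf{b}]\cap\mathcal{X}$, since this rules out the alternative $\mathcal{X}\cap[\mathbf{b}]=\varnothing$ and forces $[\mathbf{b}]\subseteq\mathcal{X}$.

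To build such a $\mathbf{c}$ I would use the M-family property of $\hhh$. For every $n\geq 1$ the node $b_0\cup b_n$ belongs to $\Sigma$, and the set
\[ A_n=\big\{ b_0\cup b_n\cup\{\min b_i\}: i>n\big\} \]
is contained in the set of immediate successors of $b_0\cup b_n$; hence, by property (P4) and the fact that $\hhh$ is hereditary, $A_n\in\hhh$. By Fact \ref{f3}(iii) there is $A\in\hhh$ with $A\cap A_n\neq\varnothing$ for infinitely many $n$, say for $n$ in an infinite set $N$. I then recursively choose $n_0<n_1<\cdots$ in $N$ together with elements
\[ a_k=b_0\cup b_{n_k}\cup\{\min b_{j_k}\}\in A\cap A_{n_k}\qquad(j_k>n_k), \]
arranging in addition that $n_{k+1}>j_k$ for all $k$ (possible since $N$ is infinite and only finitely many indices are used at each stage). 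Then $0<n_0<j_0<n_1<j_1<\cdots$ and $\{a_k:k\in\nn\}\subseteq A$, whence $\{a_k:k\in\nn\}\in\hhh$ because $\hhh$ is hereditary.

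It remains to realise $\{a_k:k\in\nn\}$ as $\Phi(\mathbf{c})$ for a $\mathbf{c}\in[\mathbf{b}]$: I would take $c_0=b_0$, $c_{2k+1}=b_{n_k}$ and $c_{2k+2}=b_{j_k}$ for every $k\in\nn$. Since the indices $0,n_0,j_0,n_1,j_1,\dots$ are strictly increasing and $(b_n)_n$ is a block sequence, it is immediate that $\mathbf{c}=(c_n)_n\in\mathbf{B}$ and that each $c_m$ lies in $\langle\mathbf{b}\rangle$, so $\mathbf{c}\in[\mathbf{b}]$; and by the definition \eqref{e16} of $\Phi$ the $k$-th element of $\Phi(\mathbf{c})$ is $c_0\cup c_{2k+1}\cup\{\min c_{2k+2}\}=a_k$. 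Hence $\Phi(\mathbf{c})\in\hhh$ and $\mathbf{c}\in[\mathbf{b}]\cap\mathcal{X}$, which completes the argument.

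The verifications $A_n\in\hhh$ and $\mathbf{c}\in[\mathbf{b}]$ are routine. The one genuinely new decision — and the reason this is not a verbatim copy of the proof of Lemma \ref{l15} — is the choice of the sets $A_n$ as successor-sets of the nodes $b_0\cup b_n$, which all share the common stem $b_0$ (rather than successors of the growing nodes $\bigcup_{i\le n}b_i$ that were used for $\Delta$). This is precisely what guarantees that the set delivered by the M-family property of $\hhh$ has the shape of a block fan $\Phi(\mathbf{c})$ rather than that of a descender $\Delta(\mathbf{c})$.
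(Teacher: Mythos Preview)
Your proof is correct and follows essentially the same route as the paper's: you set $\mathcal{X}=\{\mathbf{c}:\Phi(\mathbf{c})\in\hhh\}$, apply Theorem~\ref{t2} to obtain a monochromatic $[\mathbf{b}]$, use (P4) to see that the sets $A_n=\{b_0\cup b_n\cup\{\min b_i\}:i>n\}$ lie in $\hhh$, invoke the M-family property via Fact~\ref{f3}(iii), and then interleave the resulting indices to build $\mathbf{c}\in[\mathbf{b}]$ with $\Phi(\mathbf{c})\in\hhh$. The only difference is cosmetic: you spell out the recursive selection of $n_k<j_k<n_{k+1}$ a bit more explicitly than the paper does.
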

\begin{proof}
We let
\[ \mathcal{X}=\{ \mathbf{c}\in\mathbf{B}: \Phi(\mathbf{c})\in\hhh\}.\]
Then $\mathcal{X}$ is a $C$-measurable subset of $\mathbf{B}$.
Hence, by Theorem \ref{t2}, there exists $\mathbf{b}=(b_n)_n\in
\mathbf{B}$ such that $[\mathbf{b}]$ is monochromatic. We claim
that $[\mathbf{b}]\subseteq \mathcal{X}$. Indeed, for every
$n\geq 1$ let
\[ A_n=\big\{ b_0\cup b_n\cup \{\min b_k\}: k>n\big\}\in\pap(\Sigma). \]
The set $A_n$ is a subset of the set $\big\{b_0\cup b_n\cup \{m\}:
m\in\nn \text{ and }b_n<\{m\}\big\}$ which, by property (P4), belongs
to $\hhh$. Hence, by (P1), $A_n\in\hhh$ for all $n\in\nn$. As $\hhh$
is an M-family, by Fact \ref{f3}(iii), we may select $L=\{l_0<l_1<...\}, M=\{m_0<m_1<...\}
\in \pap(\nn)$ with $1\leq l_n<m_n<l_{n+1}$ for all $n\in\nn$ and such that
\[ \big\{ b_0\cup b_{l_n}\cup \{\min b_{m_n}\}: n\in\nn\big\}\in\hhh.\]
We define $\mathbf{c}=(c_n)_n$ by $c_0=b_0$ and
$c_{2n+1}=b_{l_n}$, $c_{2n+2}=b_{m_n}$ for every $n\in\nn$. Then
$\mathbf{c}\in [\mathbf{b}]$ and $\Phi(\mathbf{c})=\{ b_0\cup
b_{l_n}\cup \{\min b_{m_n}\}: n\in\nn\}\in\hhh$. Hence,
$[\mathbf{b}]\cap\mathcal{X}\neq\varnothing$ and the result
follows.
\end{proof}
Let $\mathbf{b}=(b_n)_n$ be the block sequence obtained by Lemma
\ref{l18}. We are going to select a subset of $\Sigma$
by defining an appropriate endomorphism of $\Sigma$ (the desired
subset will be the image of this endomorphism). In particular,
we define $h:\Sigma\to\Sigma$ as follows.
\begin{enumerate}
\item[(a)] We set $h(\varnothing)=\varnothing$.
\item[(b)] If $t=(n)$ with $n\in\nn$, we set
$h\big((n)\big)=b_0\cup b_{2n+1}\cup \{\min b_{2n+2}\}$.
\item[(c)] If $t=(n_0<...< n_k)\in \Sigma$ with $k\geq 1$, we
set
\[ h(t)=b_0\cup \Big( \bigcup_{i=0}^{k-1} (b_{2n_i+1}\cup b_{2n_i+2}) \Big)
\cup b_{2n_k+1} \cup \{\min b_{2n_k+2}\}. \]
\end{enumerate}
It is easy to see that the map $h$ is well-defined and one-to-one.
We also observe the following.
\begin{enumerate}
\item[(O3)] For every $s,t\in\Sigma$ we have $s\sqsubset t$ if and only if
$h(s)\sqsubset h(t)$. Thus, if $C\in \pap(\Sigma)$, then $C$ is a chain of
$\Sigma$ if and only if $h(C)$ is.
\end{enumerate}
The following fact shows the relation between the
maps $\Phi$ and $h$.
\begin{fact}
\label{f7} Let $F$ be a block fan of $\Sigma$. Then there exists
$\mathbf{c}\in [\mathbf{b}]$ such that $h(F)=\Phi(\mathbf{c})$.
\end{fact}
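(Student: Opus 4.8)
The plan is to unwind the definitions of $\Phi$ and of $h$ and exhibit the required block sequence explicitly. Since $F$ is a block fan, I fix a representation $F=\{s\cup d_n:n\in\nn\}$ where $s\in\Sigma$, $(d_n)_n\in\mathbf{B}$ and $s<d_0$; write $s=(a_0<\dots<a_{\ell-1})$ (so $\ell=|s|$, possibly $0$) and, for every $n$, $d_n=(e^n_0<\dots<e^n_{p_n})$, so that $e^n_0=\min d_n$ and $e^n_{p_n}=\max d_n$. Note $s\cup d_n\in\Sigma$ precisely because $s<d_0\le d_n$.

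The first step is to evaluate $h(s\cup d_n)$. The relevant clauses of the definition of $h$ are (b) and (c), according to whether $|s\cup d_n|=1$ or $|s\cup d_n|\ge 2$, and they are two instances of one formula. Writing $s\cup d_n$ in increasing order and splitting the running index according to whether it comes from $s$ or from $d_n$, one obtains
\[ h(s\cup d_n)=S\cup\Big(\bigcup_{r=0}^{p_n-1}(b_{2e^n_r+1}\cup b_{2e^n_r+2})\Big)\cup b_{2e^n_{p_n}+1}\cup\{\min b_{2e^n_{p_n}+2}\}, \]
where $S:=b_0\cup\bigcup_{i=0}^{\ell-1}(b_{2a_i+1}\cup b_{2a_i+2})$ does not depend on $n$ (and $S=b_0$ when $\ell=0$). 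Guided by this, I \emph{define} $\mathbf{c}=(c_m)_m$ by $c_0=S$, by $c_{2n+1}=\big(\bigcup_{r=0}^{p_n-1}(b_{2e^n_r+1}\cup b_{2e^n_r+2})\big)\cup b_{2e^n_{p_n}+1}$, and by $c_{2n+2}=b_{2e^n_{p_n}+2}$. Then $c_0\cup c_{2n+1}\cup\{\min c_{2n+2}\}=h(s\cup d_n)$ for every $n$, so by the definition of $\Phi$ we get $\Phi(\mathbf{c})=\{c_0\cup c_{2n+1}\cup\{\min c_{2n+2}\}:n\in\nn\}=\{h(s\cup d_n):n\in\nn\}=h(F)$.

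It then remains to check that $\mathbf{c}\in[\mathbf{b}]$, i.e. that $\mathbf{c}\in\mathbf{B}$ and each $c_m\in\langle\mathbf{b}\rangle$. That each $c_m$ is a nonempty finite union of blocks of $\mathbf{b}$ is immediate from the formulas, the relevant index sets of blocks being finite and strictly increasing (the $a_i$ increase, and so does each $e^n_0<\dots<e^n_{p_n}$). For the conditions $c_m<c_{m+1}$ it suffices, $\mathbf{b}$ being a block sequence, to compare the largest block-index occurring in $c_m$ with the smallest one occurring in $c_{m+1}$: the inequality $c_{2n+1}<c_{2n+2}$ is trivial, $c_0<c_1$ reduces to $\max s<\min d_0$ (and is trivial when $s=\varnothing$), and $c_{2n+2}<c_{2n+3}$ reduces to $\max d_n<\min d_{n+1}$ — the last two being exactly the hypotheses that $s<d_0$ and that $(d_n)_n$ is a block sequence.

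The argument is a routine computation, so there is no genuine obstacle; the only point that requires care is the bookkeeping of block-indices in the expansion of $h(s\cup d_n)$, together with the verification that the degenerate cases ($s=\varnothing$, or some $d_n$ a singleton) are subsumed by the common formula for $h$ — they are, with the appropriate unions being empty and $S=b_0$.
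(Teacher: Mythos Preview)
Your proof is correct and follows essentially the same approach as the paper's: both write the block fan as $\{s\cup u_n:n\in\nn\}$, split each $u_n$ into an initial part and its maximum, and define $\mathbf{c}$ by the very same formulas (your $e^n_{p_n}$ is the paper's $l_n$, your $\{e^n_0,\dots,e^n_{p_n-1}\}$ is the paper's $s_n$). If anything, your version is more explicit in verifying the block conditions $c_m<c_{m+1}$ and in handling the degenerate cases, which the paper leaves to the reader.
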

\begin{proof}
Let $(u_n)_n$ be a block sequence of $\Sigma$ and $s\in\Sigma$
with $s<u_0$ and such that $F=\{ s\cup u_n:n\in\nn\}$. For every
$n\in\nn$ there exist $s_n\in\Sigma$ and $l_n\in\nn$ with
$s_n< \{l_n\}$ and $u_n=s_n\cup \{l_n\}$ (notice
that $s_n$ may be empty). We define $\mathbf{c}=(c_n)_n
\in \mathbf{B}$ as follows. We let
\[c_0=b_0 \cup \bigcup_{k\in s} (b_{2k+1}\cup b_{2k+2})\]
with the convention that $\bigcup_{k\in s} (b_{2k+1}\cup
b_{2k+2})=\varnothing$ if $s=\varnothing$. For every $n\geq 1$
we set
\[ c_{2n+1}= \Big( \bigcup_{k\in s_n} (b_{2k+1}\cup
b_{2k+2})\Big) \cup b_{2l_n+1} \text{ and } c_{2n+2}=b_{2l_n+2}.\]
It is easy to see that $\mathbf{c}\in [\mathbf{b}]$ and
$h(F)=\Phi(\mathbf{c})$, as desired.
\end{proof}
Finally, we define $\psi:\Sigma\to\nn$ by $\psi(s)=\phi\big(h(s)\big)$
for all $s\in\Sigma$. Both $\phi$ and $h$ are one-to-one, and so,
the map $\psi$ is one-to-one too. As in Example \ref{ex2}, let
$\iii_{\mathrm{wf}}$ be the ideal on $\Sigma$ generated by the set
$\wf$ of all downwards closed, well-founded, infinite subtrees of $\Sigma$.
That is $\iii_{\mathrm{wf}}= \{ W\in\pap(\Sigma): \exists T\in\wf
\text{ with } W\subseteq T\}$.
\begin{lem}
\label{l19} The following hold.
\begin{enumerate}
\item[(i)] $\ccc\subseteq \{\psi^{-1}(A):A\in\aaa\}$.
\item[(ii)] $\fff_{\mathrm{Block}}\subseteq \{\psi^{-1}(B):B\in\bbb\}$.
\item[(iii)] $\iii_{\mathrm{wf}}\subseteq \{\psi^{-1}(B):B\in\bbb^{\perp\perp}\}$.
\end{enumerate}
\end{lem}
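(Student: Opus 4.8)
The plan is to prove all three inclusions by ``transport'': take a subset of $\Sigma$ of the relevant kind and push it forward along the one-to-one map $\psi=\phi\circ h$. The only combinatorial ingredients I will need are Fact~\ref{f7} and Lemma~\ref{l18} on the ``fan'' side, and Fact~\ref{f5} together with Fact~\ref{f6} for part~(iii); everything else is bookkeeping with the injections $\phi$ and $h$. First I would record the following reduction, valid because $\phi$, $h$, and hence $\psi$, are one-to-one and $\aaa$, $\bbb$, $\eee$, $\hhh$ are hereditary: for every $S\in\pap(\Sigma)$ one has $S\in\{\psi^{-1}(A):A\in\aaa\}$ if and only if $\psi(S)\in\aaa$, and likewise $S\in\{\psi^{-1}(B):B\in\bbb\}$ if and only if $\psi(S)\in\bbb$ (with analogous equivalences for $\eee$, $\hhh$ in terms of $\phi$ or of $h$). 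So in each part it will suffice to show that $\psi$ maps the given set into $\aaa$, $\bbb$, or $\bbb^{\perp\perp}$.

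For~(i), given $C\in\ccc$, I would note that $h(C)$ is again an infinite chain of $\Sigma$ by~(O3), hence $h(C)\in\ccc\subseteq\eee$ by~(P2), which by the reduction above means $\psi(C)=\phi(h(C))\in\aaa$; thus $C=\psi^{-1}(\psi(C))$ is of the required form. For~(ii), given a block fan $F$, Fact~\ref{f7} supplies $\mathbf{c}\in[\mathbf{b}]$ with $h(F)=\Phi(\mathbf{c})$, and $\Phi(\mathbf{c})\in\hhh$ by Lemma~\ref{l18}, so $h(F)\in\hhh$ and hence $\psi(F)=\phi(h(F))\in\bbb$.

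For~(iii), let $W\in\iii_{\mathrm{wf}}$, so $W\subseteq T$ for some downwards closed, well-founded, infinite subtree $T$ of $\Sigma$. I would show $\psi(W)\in\bbb^{\perp\perp}$ using the criterion in Fact~\ref{f1}: for an arbitrary $C\in\pap(\psi(W))$ I must find $A\in\pap(C)$ with $A\in\bbb$. Set $W'=\psi^{-1}(C)$; since $\psi$ is one-to-one this is an infinite subset of $W$, hence of $T$, so by Fact~\ref{f5} it contains a fan and by Fact~\ref{f6} a block fan $F$. Part~(ii) then gives $\psi(F)\in\bbb$, and $F\subseteq W'$ forces $\psi(F)\subseteq C$; as $\psi(F)$ is infinite, $A:=\psi(F)$ does the job. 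Hence $\psi(W)\in\bbb^{\perp\perp}$ and $W=\psi^{-1}(\psi(W))$ has the desired form. I do not expect any real obstacle here: the only thing to be watchful about is that $\phi$ and $h$ are not surjective, so that sets like $\psi(\psi^{-1}(C))$ may be proper (but still infinite) subsets of $C$, which heredity of the receiving families absorbs harmlessly. The conceptual core of the lemma --- and the one place where the well-foundedness hypothesis in $\iii_{\mathrm{wf}}$ is actually used --- is Fact~\ref{f5}, that every infinite subset of a well-founded tree contains a fan; this is precisely the mechanism carrying $\iii_{\mathrm{wf}}$ into $\bbb^{\perp\perp}$.
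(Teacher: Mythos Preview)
Your proposal is correct and follows essentially the same route as the paper: part~(i) via (O3) and (P2), part~(ii) via Fact~\ref{f7} and Lemma~\ref{l18}, and part~(iii) via Facts~\ref{f5}, \ref{f6}, part~(ii), and the criterion in Fact~\ref{f1}. The only difference is cosmetic: the paper verifies Fact~\ref{f1} by quantifying directly over infinite subsets of $W$ (implicitly using the bijection $\psi$ induces between $\pap(W)$ and $\pap(\psi(W))$), whereas you make the pullback $W'=\psi^{-1}(C)$ explicit.
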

\begin{proof}
Part (i) is an immediate consequence of property (P2) and observation (O3)
above. Part (ii) follows by Lemma \ref{l18} and Fact \ref{f7}. To
see part (iii), fix $W\in\iii_{\mathrm{wf}}$. Let $A\in\pap(W)$
arbitrary. By Facts \ref{f5} and \ref{f6}, there exists a block
fan $F$ with $F\subseteq A$. By part (ii), we see that $\psi(F)
\in\bbb$. Hence, by Fact \ref{f1}, we conclude that $\psi(W)\in
\bbb^{\perp\perp}$, as desired.
\end{proof}
The trees $\Sigma$ and $\bt$ are isomorphic, i.e. there exists a
bijection $e:\bt\to\Sigma$ with $|e(t)|=|t|$ for all $t\in\bt$ and
such that $t_1\sqsubset t_2$ in $\bt$ if and only if
$e(t_1)\sqsubset e(t_2)$. Hence, by Lemma \ref{l19}, the proof of
Theorem II is completed.
\begin{rem}
In \cite{Kra}, A. Krawczyk proved that if $\iii$ is a bisequential
analytic ideal on $\nn$, then either,
\begin{enumerate}
\item[(A1)] $\iii$ is countably generated in $\iii$, or
\item[(A2)] there exists a one-to-one map $\psi:\bt\to\nn$ such that,
setting $\jjj=\{\psi^{-1}(A):A\in\iii\}$, we have that $\ccc\subseteq
\jjj\subseteq \iii_{\mathrm{d}}$,
\end{enumerate}
where $\ccc$ denotes the set of all infinite chains of $\bt$ while
$\iii_{\mathrm{d}}$ denotes the ideal of all infinite dominated
subsets of $\bt$. Let us see how Theorem II yields the above result.
So, fix a bisequential analytic ideal $\iii$ on $\nn$. We set
$\aaa=\iii$ and $\bbb=\iii^{\perp}$. Clearly $\aaa$ and $\bbb$
are hereditary and orthogonal families. Moreover, $\aaa$ is $\SB^1_1$
while $\bbb$ is $\PB^1_1$. By Proposition \ref{p6}(ii), we see that
$\bbb$ is an M-family. By Fact \ref{f2}, the ideal $\iii$
has the Fr\'{e}chet property, and so, $\bbb^{\perp}=\iii$
and $\bbb^{\perp\perp}=\iii^{\perp}=\bbb$. Thus, applying
Theorem II, the result follows.
\end{rem}
\begin{rem}
Let $\aaa$ and $\bbb$ be as in Theorem II and assume that
$\aaa$ is not countably generated in $\bbb^{\perp}$. Let
$\psi:\bt\to\nn$ be the one-to-one map obtained by
Theorem II. Notice that for every downwards closed,
infinite subtree $T$ of $\bt$ we have that $T\in\wf$
if and only if $\psi(T)\in \bbb^{\perp\perp}$, i.e. the
set $\wf$ is Wadge reducible to $\bbb^{\perp\perp}$. Thus,
if $\aaa$ is not countably generated in $\bbb^{\perp}$, then
the family $\bbb^{\perp\perp}$ is at least $\PB^1_1$-hard.
\end{rem}


\end{document}